\documentclass[11pt,a4paper,reqno,dvipdfmx]{amsart}

\usepackage[truedimen,margin=25truemm]{geometry} 
\usepackage{amsmath,amssymb,amsthm,amscd}
\usepackage{extarrows,mathtools}

\title[Cancellation-free quantum $K$-theoretic divisor axiom]
{Cancellation-free version of the quantum $K$-theoretic divisor axiom 
for the flag manifold in the quasi-minuscule case}
\author[R.~Kato]{Ryo Kato}
\address[Ryo Kato]{Doctoral Program in Mathematics, 
Degree Programs in Pure and Applied Sciences, 
Graduate School of Science and Technology, 
University of Tsukuba, 
1-1-1 Tennodai, Tsukuba, Ibaraki 305-8571, Japan.}
\email{s2430043@u.tsukuba.ac.jp}

\author[D.~Sagaki]{Daisuke Sagaki}
\address[Daisuke Sagaki]{Department of Mathematics, 
Institute of Pure and Applied Sciences, University of Tsukuba, 
1-1-1 Tennodai, Tsukuba, Ibaraki 305-8571, Japan.}
\email{sagaki@math.tsukuba.ac.jp}

\keywords{quantum $K$-theory, divisor axiom, Gromov-Witten invariants, 
quantum Bruhat graph \newline
Mathematics Subject Classification 2020: 
Primary 14N35; Secondary 14M15, 14N15, 14N10, 05E14.}

\usepackage[dvipdfmx]{xcolor}

\allowdisplaybreaks
\numberwithin{equation}{section}

\newcommand{\Fg}{\mathfrak{g}}

\newcommand{\Ft}{\mathfrak{t}}

\newcommand{\BZ}{\mathbb{Z}}

\newcommand{\BC}{\mathbb{C}}

\newcommand{\CO}{\mathcal{O}}

\newcommand{\vpi}{\varpi}

\newcommand{\ba}{\mathbf{a}}
\newcommand{\bd}{\mathbf{d}}
\newcommand{\be}{\mathbf{e}}

\newcommand{\bp}{\mathbf{p}}
\newcommand{\bq}{\mathbf{q}}

\newcommand{\bt}{\mathbf{t}}
\newcommand{\bv}{\mathbf{v}}

\DeclareMathOperator{\wt}{wt}
\DeclareMathOperator{\qwt}{qwt}
\DeclareMathOperator{\ed}{end}
\DeclareMathOperator{\codim}{codim}

\newcommand{\af}{\mathrm{af}}

\newcommand{\pt}{\mathrm{pt}}
\newcommand{\Hom}{\mathrm{Hom}}
\newcommand{\QBG}{\mathrm{QBG}}
\newcommand{\QLS}{\mathrm{QLS}}
\newcommand{\Lie}{\mathrm{Lie}}

\newcommand{\edge}[1]{ \xrightarrow{\hspace{2pt}#1\hspace{2pt}} }
\newcommand{\Qe}[1]{ \xrightarrow[\mathsf{q}]{\hspace{2pt}#1\hspace{2pt}} }
\newcommand{\Be}[1]{ \xrightarrow[\mathsf{B}]{\hspace{2pt}#1\hspace{2pt}} }

\newcommand{\lng}{w_{\circ}}
\newcommand{\mcr}[1]{\lfloor #1 \rfloor}
\newcommand{\mxcr}[1]{\lceil #1 \rceil}

\newcommand{\ti}[1]{\widetilde{#1}}

\newcommand{\ol}[1]{\overline{#1}}

\newcommand{\pair}[2]{\langle #1,\,#2 \rangle}

\newcommand{\tb}[1]{\le_{#1}}
\newcommand{\tbmin}[3]{\min(#1W_{#2},\le_{#3}\nobreak)}

\newcommand{\J}{J}

\newcommand{\WJs}{W_{\J}}
\newcommand{\WJ}{W^{\J}}
\newcommand{\DJs}{\Delta_{\J}^{+}}
\newcommand{\DJp}{\Delta^{+} \setminus \Delta_{\J}^{+}}
\newcommand{\DJsm}{\Delta_{\J}^{-}}
\newcommand{\DJm}{\Delta^{-} \setminus \Delta_{\J}^{-}}

\newcommand{\QJ}{Q_{\J}}
\newcommand{\QJv}{Q_{\J}^{\vee}}
\newcommand{\QJvp}{Q_{\J}^{\vee,+}}

\newcommand{\bQBG}[1]{\mathbf{QBG}_{#1}^{\lhd}}
\newcommand{\bQLS}[1]{\mathbf{QLS}_{#1}^{\lhd}}
\newcommand{\bR}[1]{\mathbf{R}_{#1}^{\lhd}}
\newcommand{\hbR}[1]{\widehat{\mathbf{R}}_{#1}^{\lhd}}

\newcommand{\IL}[1]{ \iota_{\mathrm{L}}(#1) }

\theoremstyle{plain}
\newtheorem{lem}{Lemma}[section]
\newtheorem{prop}[lem]{Proposition}
\newtheorem{thm}[lem]{Theorem}
\newtheorem{cor}[lem]{Corollary}
\newtheorem{conj}[lem]{Conjecture}

\newtheorem{ithm}{Theorem}

\newtheorem{iconj}[ithm]{Conjecture}

\theoremstyle{definition}
\newtheorem{dfn}[lem]{Definition}

\newtheorem{idfn}[ithm]{Definition}

\theoremstyle{remark}
\newtheorem{ex}[lem]{Example}
\newtheorem{rem}[lem]{Remark}
\newtheorem{claim}{Claim}[lem]

\newtheorem{irem}[ithm]{Remark}

\newcommand{\bqed}{\quad \hbox{\rule[-0.5pt]{3pt}{8pt}}}

\newenvironment{enu}{%
 \begin{enumerate}%
}{\end{enumerate}}

\begin{document}


%
\begin{abstract}
We prove a cancellation-free version of the quantum $K$-theoretic divisor axiom 
for the flag manifold in the quasi-minuscule case under some assumptions.
Namely, we remove the cancellations 
from the quantum $K$-theoretic divisor axiom obtained in \cite{LNSX} in the case where 
the fundamental weight corresponding to the divisor class is quasi-minuscule. 
\end{abstract}

\maketitle

%
\section{Introduction.}

Let $G$ be a connected, simply-connected, simple (linear) algebraic group over $\BC$
with $T$ a maximal torus of $G$, and $B$ a Borel subgroup of $G$, and set 
$\Fg := \Lie(G)$ and $\Ft := \Lie(T)$, where 
$\Fg$ is a finite-dimensional simple Lie algebra over $\BC$, and 
$\Ft$ is a Cartan subalgebra of $\Fg$. 
Let $\bigl\{ \alpha_{j} \bigr\}_{j \in I}$ and 
$\bigl\{ \alpha_{j}^{\vee} \bigr\}_{j \in I}$ be the simple roots and the simple coroots for $\Fg$, 
respectively. Set $Q^{\vee,+}:=\sum_{j \in I} \BZ_{\ge 0} \alpha_{j}^{\vee}$. 
We denote by $\Lambda=\bigoplus_{i \in I} \BZ \vpi_{i}$ the integral weight lattice for $\Fg$, 
where $\vpi_{i}$ is the $i$-th fundamental weight for $i \in I$. 
Let us denote by $W = \langle s_{j} \mid j \in I \rangle \subset GL(\Ft^{\ast})$ 
the (finite) Weyl group of $\Fg$, 
where $s_{j}$ is the simple reflection in a simple root $\alpha_{j}$. 
In \cite{LNSX}, they gave a quantum $K$-theoretic divisor axiom 
for the flag manifold $X:=G/B$ as follows: in $K_T(\pt)=R(T) \cong 
\BZ[\be^{\nu} \mid \nu \in \Lambda]$, 
\begin{equation} \label{eq:thpi}
\langle \CO^{s_i}, \CO^w, \CO_x \rangle_{d} =  
 \langle \CO^w, \CO_x \rangle_{d} - 
 \sum_{ \bp \in \bR{w,x,d} }
(-1)^{\ell(\bp)}\be^{-\vpi_{i}+\wt(\eta_{\bp})}
\end{equation}
for $i \in I$ and $w,x \in W$, $d \in Q^{\vee,+}$, 
where $\CO_{u}$ and $\CO^{u}$ for $u \in W$ 
are the Schubert class and the opposite Schubert class 
in the equivariant $K$-theory ring $K_T(X)$ of 
the flag manifold $X:=G/B$, and 
$\langle \gamma_1, \gamma_2, \ldots ,\gamma_m \rangle_{d}$ 
for $\gamma_1, \gamma_2, \dots ,\gamma_m \in K_{T}(X)$ and $d \in Q^{\vee,+}$ 
denotes the corresponding $m$-point ($T$-equivariant) 
$K$-theoretic Gromov-Witten (KGW) invariant; see, e.g., \cite{LNSX}. 
In this paper, we focus only on the sum 
\begin{equation} \label{eq:corr}
 \sum_{ \bp \in \bR{w,x,d} }
(-1)^{\ell(\bp)}\be^{-\vpi_{i}+\wt(\eta_{\bp})}
\end{equation}
on the right-hand side of \eqref{eq:thpi} 
which is defined combinatorially in terms of 
the quantum Bruhat graph $\QBG(W)$ for $W$ (see Definition~\ref{idfn:QBG} below); 
we do not use 
the (geometric) definitions of $K_{T}(X)$, $\CO^{u}$, $\CO_{u}$, and the KGW invariants. 
The sum \eqref{eq:corr} is not cancellation-free in general. 
The purpose of this paper is to give (under some assumption) 
a cancellation-free version of the quantum $K$-theoretic divisor axiom for $X=G/B$ 
by removing all the cancellations from \eqref{eq:corr} 
in the case where $\vpi_{i}$ is quasi-minuscule in the sense that 
$\pair{\vpi_{i}}{\beta^{\vee}} \in \{0,1,2\}$ 
for all $\beta \in \Delta^{+}$, where $\Delta^{+}$ denotes the set of 
positive roots for $\Fg$, and $\beta^{\vee}$ denotes the coroot of $\beta$; 
for the list of quasi-minuscule fundamental weights, see \eqref{eq:list}. 
Remark that if $\Fg$ is of classical type (i.e., of type $A$, $B$, $C$, or $D$), 
then all the fundamental weights $\vpi_{i}$ are quasi-minuscule. 

Let us explain our formula more precisely. 
First, let us recall the definition of the quantum Bruhat graph $\QBG(W)$. 

\begin{idfn} \label{idfn:QBG}
The quantum Bruhat graph $\QBG(W)$ is the $\Delta^{+}$-labeled
directed graph whose vertices are the elements of $W$ and 
whose edges are of the following form: 
$x \edge{\alpha} y$, with $x, y \in W$ and $\alpha \in \Delta^{+}$, 
such that $y = x s_{\alpha}$ and either of the following holds: 
(B) $\ell(y) = \ell (x) + 1$; 
(Q) $\ell(y) = \ell (x) + 1 - 2 \pair{\rho}{\alpha^{\vee}}$, 
where $\rho:=\frac{1}{2}\sum_{\alpha \in \Delta^{+}} \alpha$. 
An edge satisfying (B) (resp., (Q)) is called a Bruhat edge (resp., quantum edge). 
\end{idfn}

Let $w,v \in W$, and let 
\begin{equation*}
w = u_{0} \edge{\beta_{1}} u_{1} \edge{\beta_{2}} \cdots \edge{\beta_{r}} u_{r} = v
\end{equation*}
be a shortest directed path from $w$ to $v$ in $\QBG(W)$. 
Then we set
\begin{equation*}
\qwt(w \Rightarrow v) :=  \sum_{ \begin{subarray}{c} 1 \le t \le r \\ 
\text{$u_{t-1} \edge{\beta_t} u_{t}$ is} \\
\text{a quantum edge} \end{subarray} } \beta_{t}^{\vee} \in Q^{\vee,+}; 
\end{equation*}
we know from \cite[Proposition~8.1]{LNSSS1} that 
$\qwt(w \Rightarrow v)$ does not depend on 
the choice of a shortest directed path $\bp$. 

Fix $i \in I$ such that $\vpi_{i}$ is quasi-minuscule. 
Let $\lhd$ be an arbitrary reflection order on $\Delta^{+}$ 
satisfying the condition that $\gamma \lhd \beta$ 
for all $\gamma \in \DJs$ and $\beta \in \DJp$, 
where $\J:=I \setminus \{i\}$, and 
$\DJs:=\Delta^{+} \cap \bigoplus_{j \in \J} \BZ \alpha_{j}$. 
Fix $w,x \in W$, and $d \in Q^{\vee,+}$. 
Here we define $\bR{w,x,d}$ to be 
the set of all directed paths 
$\bp: w = u_{0} \edge{\beta_{1}} u_{1} \edge{\beta_{2}} \cdots 
\edge{\beta_{r}} u_{r} =: \ed(\bp)$
starting at $w$ satisfying the conditions that 
\begin{equation}
\begin{cases}
r \ge 0, \quad 
\beta_{1} \lhd \cdots \lhd \beta_{r}, \quad 
\pair{\vpi_{i}}{\beta_{t}^{\vee}}=2 \ \text{for $1 \le t \le r$}, \\
\qwt (\ed(\bp) \Rightarrow x) \le d - \qwt(\bp), \quad 
\pair{\vpi_{i}}{d - \qwt(\bp)} = 0, \quad \ed(\bp) \in x\WJs, 
\end{cases}
\end{equation}
where we set $\WJs:=\langle s_{j} \mid j \in \J \rangle$, and 
for $\xi_{1},\xi_{2} \in \Ft$, we write $\xi_{1} \ge \xi_{2}$ 
if $\xi_{1} - \xi_{2} \in Q^{\vee,+}$. 

Further, we define the $w$-tilted Bruhat order 
$\tb{w}$ on $W$ as follows: $v_{1} \tb{w} v_{2}$ if 
there exists a shortest directed path in $\QBG(W)$ 
from $w$ to $v_{2}$ passing through $v_{1}$. 
We know from \cite[Theorem~7.1]{LNSSS1}
that the coset $x\WJs$ has a unique minimal element 
with respect to $\tb{w}${\rm;} we denote it by $\tbmin{x}{\J}{w}$. 

We now state our main formulas, whose proofs are completely combinatorial; 
here we set $Q^{+}:=\sum_{j \in I} \BZ_{\ge 0} \alpha_{j}$. 
%
%
\begin{ithm}[see Theorem~\ref{thm:main1}] \label{ithm:main1}
Let $w,x \in W$ and $d \in Q^{\vee,+}$ be such that 
$\# \bR{w,x,d} \in \{0,1\}$. We have
\begin{equation} \label{eq:main1x}
\langle \CO^{s_i}, \CO^{w}, \CO_{x} \rangle_{d} =  
\begin{cases}
1 - (-1)^{\ell(w) - \ell(x_{\min})} \be^{-\vpi_{i}+\wt(\eta)}
  & \text{\rm if $\# \bR{w,x,d} = 1$}, \\[1mm]
1 & \text{\rm if $\bR{w,x,d} = \emptyset$ and $\qwt(w \Rightarrow x) \le d$}, \\[1mm]
0 & \text{\rm if $\bR{w,x,d} = \emptyset$ and $\qwt(w \Rightarrow x) \not\le d$}, 
\end{cases}
\end{equation}
where $x_{\min}:=\tbmin{x}{\J}{w}$ and 
$\wt(\eta) := \frac{1}{2}w\vpi_{i}+\frac{1}{2}x\vpi_{i} \in \vpi_{i}-Q_{+}$. 
\end{ithm}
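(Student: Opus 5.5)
The plan is to start from the quantum $K$-theoretic divisor axiom \eqref{eq:thpi} of \cite{LNSX} and compute its two ingredients separately. The first is the two-point invariant $\langle \CO^w, \CO_x \rangle_{d}$. For this I will use the known result (from \cite{LNSX} or the literature it cites) that
\begin{equation*}
\langle \CO^w, \CO_x \rangle_{d} = 1 \ \text{if $\qwt(w \Rightarrow x) \le d$}, \qquad \langle \CO^w, \CO_x \rangle_{d} = 0 \ \text{otherwise}.
\end{equation*}
This is the two-point Gromov--Witten invariant of the flag manifold, governed by the connectedness of the corresponding Gromov--Witten variety. It immediately gives the second and third cases of \eqref{eq:main1x}, where $\bR{w,x,d}=\emptyset$ and the correction sum \eqref{eq:corr} is empty.

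The main case is $\# \bR{w,x,d}=1$. Here I must show two things about the unique path $\bp$. First, $\qwt(w\Rightarrow x)\le d$, so that the first term equals $1$. Second, $(-1)^{\ell(\bp)}\be^{\wt(\eta_\bp)} = (-1)^{\ell(w)-\ell(x_{\min})}\be^{\wt(\eta)}$.

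For the first point, I would concatenate $\bp$ with a shortest path from $\ed(\bp)$ to $x$. By \cite[Proposition~8.1]{LNSSS1}-type arguments (the quantum weight of any path dominates that of a shortest path), this gives
\begin{equation*}
\qwt(w\Rightarrow x)\le \qwt(\bp)+\qwt(\ed(\bp)\Rightarrow x)\le d.
\end{equation*}

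For the weight, I would recall the definition of $\eta_\bp$ as a quantum LS path of shape $\vpi_i$ attached to $\bp$. Because $\vpi_i$ is quasi-minuscule and every label satisfies $\pair{\vpi_i}{\beta_t^\vee}=2$, the path $\eta_\bp$ should have at most two directions, $w\vpi_i$ and $\ed(\bp)\vpi_i$, each traversed with length $1/2$ (the only rational break point available is $1/2$). Since $\ed(\bp)\in x\WJs$, we have $\ed(\bp)\vpi_i = x\vpi_i$, so
\begin{equation*}
\wt(\eta_\bp)=\tfrac12 w\vpi_i+\tfrac12 x\vpi_i=\wt(\eta).
\end{equation*}
The membership $\wt(\eta) \in \vpi_{i}-Q_{+}$ follows because $w\vpi_i+x\vpi_i \in 2\vpi_i - Q^+$, and the integrality comes from the existence of the path.

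The step I expect to be the main obstacle is the sign, namely proving $\ell(\bp)\equiv \ell(w)-\ell(x_{\min}) \pmod 2$. Here $\ell(\bp)$ is the number of edges $r$. Each Bruhat edge changes length by $+1$. A quantum edge changes length by $1-2\pair{\rho}{\beta^\vee}$, which is odd. Hence every edge flips the parity of $\ell$, and $r\equiv \ell(w)-\ell(\ed(\bp)) \pmod 2$. It remains to compare $\ell(\ed(\bp))$ with $\ell(x_{\min})$ modulo $2$. I would attempt to show $\ed(\bp)=x_{\min}$, using the reflection-order condition that $\DJs$ precedes $\DJp$ together with the characterization of $\tbmin{x}{\J}{w}$ in \cite[Theorem~7.1]{LNSSS1}. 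Since the labels are increasing, $\bp$ should be the lex-minimal, hence shortest, path, and it stops as soon as it enters the coset $x\WJs$. If that identification fails, the fallback is to relate $\ed(\bp)$ to $x_{\min}$ by a path with labels in $\DJs$ and to control the parity of its length through the uniqueness hypothesis $\#\bR{w,x,d}=1$. Either way, combining the sign with the weight computation and the first point gives the first case of \eqref{eq:main1x}.
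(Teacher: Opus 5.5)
Most of your plan is sound. The two-point term, the weight $\wt(\eta_{\bp})=\frac{1}{2}w\vpi_{i}+\frac{1}{2}x\vpi_{i}$, the parity $\ell(\bp)\equiv\ell(w)-\ell(\ed(\bp)) \pmod 2$, and the bound $\qwt(w\Rightarrow x)\le d$ are all correct. For that bound, concatenating $\bp_{2}$ with a shortest path to $x$ and applying Proposition~\ref{prop:qwt} is valid, and it is more direct than the paper's route through $\bp_{\min}$ in Corollary~\ref{cor:twp}.

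The gap is the identification $\ed(\bp)=x_{\min}$, on which the sign rests. Your main argument for it never uses $\#\bR{w,x,d}=1$, and without that hypothesis the identification is false. An element of $\bR{w,x,d}$ is determined by its endpoint $v\in x\WJs$, and several elements of the coset can occur as endpoints. In Example~\ref{ex:b4i3a}, for suitable $d$ the path called $\bp_{1}$ there lies in $\bR{w,x,d}$ and ends at $s_{1}$, whereas $x_{\min}=e$, and $\ell(s_{1})\not\equiv\ell(e)$. So ``the label-increasing path stops as soon as it enters the coset'' is not a valid principle. The endpoint is prescribed, and $\tb{w}$-minimality concerns shortest paths passing through $x_{\min}$, not where the label-increasing path first meets $x\WJs$. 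Your fallback has no mechanism either. Paths inside $x\WJs$ with labels in $\DJs$ can have either parity, and uniqueness tells you nothing unless you already know that some element of $\bR{w,x,d}$ ends at $x_{\min}$.

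That last fact is the missing idea, namely Proposition~\ref{prop:Rtbmin}. It says the set of endpoints of elements of $\bR{w,x,d}$ is downward closed in $(x\WJs,\tb{w})$. Hence $\bR{w,x,d}\neq\emptyset$ gives some $\bp_{\min}$ with $\ed(\bp_{\min})=x_{\min}$, and then $\#\bR{w,x,d}=1$ forces $\bp=\bp_{\min}$. Its proof, for $u\in x\WJs$ with $u\tb{w}v=\ed(\bp)$, needs three ingredients you do not mention:
\begin{enumerate}
\item The label-increasing path $\bd$ from $w$ to $u$ has no labels in $\DJs$. Otherwise, concatenating it with a shortest path from $u$ to $v$ gives a shortest path from $w$ to $v$ whose initial label lies in $\DJs$ and so precedes $\IL{\bp_{2}}\in\DJp$ under $\lhd$. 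This contradicts the lexicographic minimality in Theorem~\ref{thm:inc}.
\item $\bd$ lies in $\QBG_{(1/2)\vpi_{i}}(W)$, so all its labels pair to $2$ with $\vpi_{i}$. This follows from \cite[Lemma~6.7]{LNSSS2} applied to the same concatenation.
\item The degree conditions hold: $\qwt(\bd)+\qwt(u\Rightarrow x)\le d$ by Proposition~\ref{prop:qwt}, and $\pair{\vpi_{i}}{\qwt(\bd)}=d_{i}$ because $\qwt(u\Rightarrow v)\in\QJvp$ by Lemma~\ref{lem:WJs}.
\end{enumerate}
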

Here we make the following conjecture. 
%
%
\begin{iconj}[see Conjecture~\ref{conj:R1even} and also Remark~\ref{rem:conj}] 
\label{iconj:R1even}
Let $i \in I$ be such that $\vpi_{i}$ is quasi-minuscule. 
We have $\# \bR{w,x,d} \in \{ 1 \} \sqcup 2\BZ_{\ge 0}$ 
for all $w,x \in W$ and $d \in Q^{\vee,+}$. 
\end{iconj}
%
%
\begin{ithm}[see Theorem~\ref{thm:main3}] \label{ithm:main}
Assume that Conjecture~\ref{conj:R1even} is true for $\vpi_{i}$. 
Let $w,x \in W$, and $d \in Q^{\vee,+}$. If $\# \bR{w,x,d} \ge 2$, then 
$\langle \CO^{s_i}, \CO^{w}, \CO_{x} \rangle_{d} =  1$. 
\end{ithm}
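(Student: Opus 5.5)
We use the divisor axiom \eqref{eq:thpi}. Since $\#\bR{w,x,d}\ge2$, the set is nonempty. We will check that this forces $\qwt(w\Rightarrow x)\le d$, so that $\langle \CO^w,\CO_x\rangle_d=1$ by the two-point result used in the proof of Theorem~\ref{ithm:main1}. The claim then becomes
\begin{equation*}
\sum_{\bp\in\bR{w,x,d}}(-1)^{\ell(\bp)}\be^{-\vpi_i+\wt(\eta_{\bp})}=0 .
\end{equation*}

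\textbf{Step 1: all summands have the same weight.} For $\bp\in\bR{w,x,d}$ we have $\ed(\bp)\in x\WJs$. Because $\pair{\vpi_i}{\beta_t^\vee}=2$ for every label $\beta_t$, I expect $\wt(\eta_{\bp})=\frac12 w\vpi_i+\frac12\ed(\bp)\vpi_i=\frac12 w\vpi_i+\frac12 x\vpi_i$, which is the $\wt(\eta)$ of Theorem~\ref{ithm:main1}. This is exactly the computation used in the case $\#\bR{w,x,d}=1$ of that theorem. So every summand equals $\pm\be^{-\vpi_i+\wt(\eta)}$, and it is enough to show that the signed count $\sum_{\bp}(-1)^{\ell(\bp)}$ vanishes.

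\textbf{Step 2: a parity invariant forcing cancellation.} I would show that $\ell(\bp)$ is determined by $\ell(\ed(\bp))$ modulo $2$. Each edge of $\QBG(W)$ changes length by $1$ (Bruhat edge) or by $1-2\pair{\rho}{\beta^\vee}$ (quantum edge), and both are odd. Hence $(-1)^{\ell(\bp)}=(-1)^{\ell(w)-\ell(\ed(\bp))}$. Since $\ed(\bp)$ ranges over the coset $x\WJs$, the parity of $\ell(\ed(\bp))$ varies, so this alone is not enough. I would therefore construct a sign-reversing involution on $\bR{w,x,d}$ whenever $\#\ge2$. The construction is to compare the endpoints with $x_{\min}=\tbmin{x}{\J}{w}$ and use the quasi-minuscule shape of the roots with $\pair{\vpi_i}{\beta^\vee}=2$. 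For two such roots, $\beta-\gamma$ is a root in $\DJs$ or a difference of this type, which should give a toggle. The toggle replaces a consecutive pair of labels $\beta_t\lhd\beta_{t+1}$, or appends or removes a final edge, staying inside $x\WJs$, and it changes $r$ by $\pm1$.

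\textbf{Step 3: using the conjecture.} Constructing the involution directly is the main obstacle. The intended route instead uses Conjecture~\ref{conj:R1even} together with the following alternative argument. By \eqref{eq:thpi} the whole expression is a polynomial identity in $R(T)$, and geometric positivity or normalization facts are not available combinatorially. So I would prove instead that all elements of $\bR{w,x,d}$ contribute signs summing to zero whenever the cardinality is even. Via Step 2 this reduces to showing that exactly half the endpoints have each parity, and I would attempt this by a deletion/induction on $d$ through the quantum Yang--Baxter moves of \cite{LNSX}. The conjecture enters by excluding odd cardinalities $\ge3$, where equal splitting is impossible. With the sum zero, \eqref{eq:thpi} gives $\langle\CO^{s_i},\CO^w,\CO_x\rangle_d=1-0=1$.
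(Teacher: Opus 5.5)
Your preliminary reductions are fine. First, $\langle \CO^{w},\CO_{x}\rangle_{d}=1$: you assert $\qwt(w\Rightarrow x)\le d$ without checking it, but it follows at once by concatenating $\bp_{2}$ with a shortest path from $\ed(\bp)$ to $x$ and applying Proposition~\ref{prop:qwt}. Second, all summands carry the same weight; the reason is that $\eta_{\bp}=(\mcr{x}^{\J},\mcr{w}^{\J})$ depends only on cosets, not that the labels pair to $2$. Third, $(-1)^{\ell(\bp)}=(-1)^{\ell(w)-\ell(\ed(\bp))}$.

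So the theorem is equivalent to $\sum_{\bp\in\bR{w,x,d}}(-1)^{\ell(\bp)}=0$, and this is exactly what you do not prove. Step~2 concedes that the involution is not constructed. Nothing indicates that a label toggle would preserve label-increasingness, membership in $\QBG_{(1/2)\vpi_{i}}(W)$, the inequality $\qwt(\ed(\bp)\Rightarrow x)\le d-\qwt(\bp)$, or the condition $\pair{\vpi_{i}}{d-\qwt(\bp)}=0$. Step~3 uses the conjecture backwards. Knowing that $\#\bR{w,x,d}$ is even says nothing about how the endpoints split by length parity: a priori, two elements whose endpoints have lengths of the same parity would give $\pm 2$. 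So Conjecture~\ref{conj:R1even} cannot serve as a final counting step. The proposed ``induction on $d$ through quantum Yang--Baxter moves'' is neither specified nor shown to respect the defining conditions of $\bR{w,x,d}$.

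The missing idea is a reduction that transports the whole set while controlling signs. The paper inducts on the length $m$ of a sequence from Lemma~\ref{lem:qL} that carries $w$ into $\WJs$. Take $j=j_{1}$, so $w^{-1}\alpha_{j}\in\DJp$. Lemma~\ref{lem:DL} and uniqueness of label-increasing paths then give the following.
\begin{itemize}
\item If $x^{-1}\alpha_{j}\in\DJm$: a bijection $\bR{w,x,d}\to\bR{s_{j}w,x,d'}$ lowering $\ell$ by one.
\item If $x^{-1}\alpha_{j}\in\DJp$: a length-preserving bijection $\bR{w,x,d}\to\bR{s_{j}w,s_{j}x,d''}$.
\item If $x^{-1}\alpha_{j}\in\Delta_{\J}$: split according to the sign of $\ed(\bp)^{-1}\alpha_{j}$ and apply the two maps on the two parts.
\end{itemize}
Here $d',d''$ are suitably shifted degrees. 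In Subcase~3.2 the last case expresses the signed sum over $\bR{w,x,d}$ as the signed sum over $\bR{s_{j}w,s_{j}x,d''}$ minus that over $\bR{s_{j}w,x,d'}$.

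The induction hypothesis only controls sets of size at least $2$, and the conjecture is needed exactly at the boundary $\#\bR{s_{j}w,x,d'}=1$. There, together with the analysis of $\tbmin{x}{\J}{w}$ from Proposition~\ref{prop:Rtbmin}, the restriction $\#\bR{\cdot}\in\{1\}\sqcup 2\BZ_{\ge 0}$ forces $\bR{s_{j}w,s_{j}x,d''}=\bR{s_{j}w,x,d'}$, so the two sums cancel. Without some such mechanism, your argument does not reach the conclusion.
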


We set
\begin{equation}
D := \sum_{\beta \in \Delta^{+}_{2}} \beta^{\vee} + 
\sum_{\gamma \in \DJs} \gamma^{\vee} \in Q^{\vee,+},
\end{equation}
where $\Delta^{+}_{2}:=\bigl\{ \beta \in \Delta^{+} \mid 
\pair{\vpi_{i}}{\beta^{\vee}} = 2 \bigr\}$, and write it as
$D=\sum_{j \in I} D_{j} \alpha_{j}^{\vee}$ 
with $D_{j} \in \BZ_{\ge 0}$ for $j \in I$. 

\begin{irem}[see Remark~\ref{rem:hR1even}]
In order to prove Conjecture~\ref{iconj:R1even}, 
it suffices to show that $\# \bR{w,x,d} \in \{ 1 \} \sqcup 2\BZ_{\ge 0}$ 
for all $w,x \in W$ and $d \in Q^{\vee,+}$ such that $d \le D$; 
notice that $W$ and $\bigl\{ d \in Q^{\vee,+} \mid d \le D \bigr\}$ are finite sets. 
\end{irem}

In the following theorem, we do not assume that 
Conjecture~\ref{conj:R1even} is true for $\vpi_{i}$. 

\begin{ithm}[see Corollary~\ref{cor:hR1even}] \label{ithm:hR1even}
Let $w,x \in W$, and let $d = \sum_{j \in I} d_{j}\alpha_{j}^{\vee} \in Q^{\vee,+}$ 
be such that $d_{j} \ge D_{j}$ for all $j \in \J=I \setminus \{i\}$. 
If $\# \bR{w,x,d} \ge 2$, then $\langle \CO^{s_i}, \CO^{w}, \CO_{x} \rangle_{d} =  1$. 
\end{ithm}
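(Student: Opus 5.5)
The plan is to reduce Theorem~\ref{ithm:hR1even} to the argument behind Theorem~\ref{ithm:main}. That argument will show $\langle \CO^{s_i}, \CO^{w}, \CO_{x} \rangle_{d} = 1$ from the assumption that $\# \bR{w,x,d}$ is even whenever it is at least $2$. So it suffices to prove, unconditionally, that $\# \bR{w,x,d}$ is even for all $d$ with $d_{j} \ge D_{j}$ for all $j \in \J$.

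\emph{Step 1: $d$ is effectively unbounded in the $\J$-directions.} Every path $\bp \in \bR{w,x,d}$ has strictly $\lhd$-increasing labels in $\Delta^{+}_{2}$, so $\qwt(\bp) \le \sum_{\beta \in \Delta^{+}_{2}} \beta^{\vee}$. For any $v \in x\WJs$, I expect $\qwt(v \Rightarrow x)$ to be at most the $\qwt$ of a path reaching the coset plus a contribution from edges labeled by roots in $\DJs$, bounded by $\sum_{\gamma \in \DJs}\gamma^{\vee}$. The condition $\pair{\vpi_{i}}{d-\qwt(\bp)}=0$ fixes only the $\alpha_{i}^{\vee}$-coefficient of $d$. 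Hence, once $d_{j} \ge D_{j}$ for $j \in \J$, the inequality $\qwt(\ed(\bp)\Rightarrow x) \le d-\qwt(\bp)$ is automatically satisfied. The set $\bR{w,x,d}$ then consists of all label-increasing $\Delta^{+}_{2}$-paths from $w$ with $\ed(\bp) \in x\WJs$ and $\pair{\vpi_{i}}{\qwt(\bp)} = d_{i}$ (the $\alpha_i^\vee$-coefficient of $d$ times $\pair{\vpi_i}{\alpha_i^\vee}=1$).

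\emph{Step 2: parity of the simplified set.} Quasi-minusculeness gives $\pair{\vpi_{i}}{\beta^{\vee}}=2$ for every label. So along such a path $u_{t}\vpi_{i} = u_{t-1}\vpi_{i} - 2u_{t-1}\beta_{t}$, and the paths are controlled by the orbit $W\vpi_{i}$. This is the quantum LS path picture with $\eta_{\bp}$ given by the Introduction's weights. Use the cancellation structure of \eqref{eq:corr} from the proof of Theorem~\ref{ithm:main1}: whenever $\#\bR{w,x,d} \ge 2$, the terms pair off with opposite signs $(-1)^{\ell(\bp)}$ and equal weights $\wt(\eta_{\bp})$. I would build a sign-reversing involution on the simplified set directly, e.g.\ toggle an edge with the $\lhd$-smallest or $\lhd$-largest admissible label via the quantum Yang--Baxter/diamond moves in $\QBG(W)$. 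Step~1's removal of the $\qwt$ constraint should make this toggle well defined, since it changes $\qwt(\bp)$ only in $\J$-coordinates while keeping $\ed(\bp) \in x\WJs$ and $\pair{\vpi_i}{\qwt(\bp)}$ unchanged. The path count with $\#=1$ is the exceptional fixed-point case, which occurs only when both parities are forced; this gives $\#\bR{w,x,d}\in\{1\}\sqcup 2\BZ_{\ge 0}$.

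\emph{Step 3.} Combine with the proof of Theorem~\ref{ithm:main}, which used the conjecture only for the specific triple $(w,x,d)$, to conclude $\langle \CO^{s_i}, \CO^{w}, \CO_{x} \rangle_{d} = 1$.

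The main obstacle is Step 2, the involution: checking that the toggle preserves $\lhd$-monotonicity and lands in the same coset. I would first try the quantum Yang--Baxter moves of \cite{LNSSS1}, restricted to the parabolic situation. If that fails, I would fall back on Remark~\ref{rem:hR1even}'s style of reduction: show that $\bR{w,x,d}$ is constant in $d_{j}$ for $d_{j}\ge D_{j}$, and that for such stable $d$ it equals a set already known to have even cardinality via the $\wt(\eta)$ cancellation in \eqref{eq:thpi} at large degree.
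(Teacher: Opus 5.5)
\textbf{Step 1 is right; the core step is missing.} Your Step 1 is correct and is exactly the paper's reduction. By \eqref{eq:D}, the labels of $\bp_{2}$ are distinct elements of $\Delta^{+}_{2}$. By Lemma~\ref{lem:WJs}, $\qwt(\ed(\bp)\Rightarrow x)\in\QJvp$ and is at most $\sum_{\gamma\in\DJs}\gamma^{\vee}$. Hence $\bR{w,x,d}=\hbR{w,x\WJs,d_{i}}$ as soon as $d_{j}\ge D_{j}$ for all $j\in\J$.

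Step 2, however, is the entire substance of the statement, and it is not supplied. There is no ``cancellation structure'' in the proof of Theorem~\ref{thm:main1} to borrow. That proof only identifies the unique element $\bp_{\min}$ when $\#\bR{w,x,d}=1$. The vanishing of $\sum(-1)^{\ell(\bp)}$ when the count is $\ge 2$ is precisely what must be proved, so appealing to it is circular.

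The toggle is never defined or checked. It would have to preserve label-increasingness, membership in $\QBG_{(1/2)\vpi_{i}}(W)$, and $\pair{\vpi_{i}}{\qwt(\bp_{2})}$, which is twice the number of quantum edges. It would also have to move the endpoint inside $x\WJs$, since an element of $\bR{w,x,d}$ is determined by its endpoint. Example~\ref{ex:b4i3c} shows the matching can even change the quantum label: $\bp_{1}$ must be paired with $\bp_{2}$ or $\bp_{3}$.

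Your fallback is also circular. Nothing gives the cancellation in \eqref{eq:thpi} ``at large degree'' a priori. In any case $d_{i}=\pair{\vpi_{i}}{\qwt(\bp_{2})}\le D_{i}$ is not large.

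\textbf{Step 3 is wrong as stated.} The proof of Theorem~\ref{thm:main3} does not use Conjecture~\ref{conj:R1even} only at $(w,x,d)$. Its induction passes to $(s_{j}w,x,d')$ and $(s_{j}w,s_{j}x,d'')$, and Subcase~3.2 applies the conjecture at $(s_{j}w,s_{j}x,d'')$. For $j=0$ one has $d'=d-\gamma^{\vee}$ with $\gamma=w^{-1}\alpha_{0}\in\DJp$, whose $\J$-coordinates can be positive. So the induction leaves the range $d_{j}\ge D_{j}$ where you would know parity. Moreover, parity alone never yields $\sum(-1)^{\ell(\bp)}=0$; you need the signed sum itself.

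\textbf{The missing idea.} Forget $d$ and prove Theorem~\ref{thm:hR1even} for $\hbR{w,x\WJs,k}$ for all $w,x,k$, by induction on the length $m$ of a sequence from Lemma~\ref{lem:qL}.
\begin{itemize}
\item Because $\hbR{w,x\WJs,k}$ carries no constraint $\qwt(\ed(\bp)\Rightarrow x)\le d-\qwt(\bp)$, the family is closed under the reflection moves; only $k$ shifts, to $k'$ or $k''$.
\item In Cases~1' and 2', the bijections $\Psi$ and $\Phi$ transport both the count and the signed sum to a smaller instance.
\item In Case~3' ($x^{-1}\alpha_{j}\in\Delta_{\J}$, so $s_{j}x\WJs=x\WJs$ and $k'=k''$), the bijections $\Phi:(\hbR{w,x\WJs,k})^{+}\to(\hbR{s_{j}w,x\WJs,k'})^{-}$ (length preserved) and $\Psi:(\hbR{w,x\WJs,k})^{-}\to(\hbR{s_{j}w,x\WJs,k'})^{-}$ (length lowered by one) have the same target. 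The signed sum therefore cancels outright.
\end{itemize}
This removes the $d'$ versus $d''$ mismatch of Subcase~3.2, which was the only place the conjecture was needed.

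Your intuition that dropping the $\qwt$-constraint is what makes things unconditional is correct. But it enters through this induction, not through a local toggle. With Theorem~\ref{thm:hR1even} in hand, Corollary~\ref{cor:twp} finishes the proof.
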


\begin{irem}[Positivity]
In Theorems~\ref{ithm:main1}, \ref{ithm:main}, and \ref{ithm:hR1even}, 
we see that 
\begin{equation*}
\langle \CO^{s_i}, \CO^{w}, \CO_{x} \rangle_{d} \in 
\BZ_{\ge 0}[\be^{-\beta}, \be^{\beta}-1 \mid \beta \in Q^{+}] 
\subset \BZ_{\ge 0}[\be^{\beta}-1 \mid \beta \in Q^{+} \cup (-Q^{+})]. 
\end{equation*}
In particular, 
$\langle \CO^{s_i}, \CO^{w}, \CO_{x} \rangle_{d}|_{\be \to 1} \in \BZ_{\ge 0}$ 
in the non-equivariant $K$-theory ring $K(X)$. 
\end{irem}

This paper is organized as follows. 
In Section~\ref{sec:prelim}, we recall the basic notation (for root systems, 
quantum Bruhat graphs, and KGW invariants) from \cite{LNSX}, 
and show some technical lemmas needed in the proofs of the theorems. 
In Subsection~\ref{subsec:mc}
give a cancellation-free quantum $K$-theoretic divisor axiom  
in the case where $\vpi_{i}$ is minuscule; see \eqref{eq:qkcf1a}.  
In Subsection~\ref{subsec:qmc}, 
we consider the case where $\vpi_{i}$ is quasi-minuscule , 
and prove Theorems~\ref{ithm:main1} and \ref{ithm:main} above. 
In Subsection~\ref{subsec:variation}, we introduce a variation $\hbR{w,x\WJs,k}$ 
of $\bR{w,x,d}$ for $w,x \in W$ and $k \in \BZ_{\ge 0}$, and prove that 
$\# \hbR{w,x\WJs,k} \in \{1\} \sqcup 2\BZ_{\ge 0}$ (cf. Conjecture~\ref{iconj:R1even}). 
As an application of this result, we prove Theorem~\ref{ithm:hR1even}. 
In Appendix~\ref{sec:example}, we give some examples of $\bR{w,x,d}$ 
in the case where $\Fg$ is of type $B_{n}$. 

\medskip

\paragraph{\bf Acknowledgments.}
D.S. would like to thank Cristian Lenart, Satoshi Naito, and Weihong Xu 
for related collaborations.
R.K. was partly supported by JST SPRING, Grant Number JPMJSP2124. 
D.S. was partly supported by JSPS Grant-in-Aid for Scientific Research (C) 23K03045.

%
\section{Preliminaries.}
\label{sec:prelim}
%
%
\subsection{Notation for root systems.} \label{subsec:alggrp}
In this paper, we will use the same notation as in \cite{LNSX}. 
Let $G$ be a connected, simply-connected, simple (linear) algebraic group over $\BC$, 
and $T$ a maximal torus of $G$. 
We set $\Fg := \Lie(G)$ and $\Ft := \Lie(T)$; 
$\Fg$ is a finite-dimensional simple Lie algebra over $\BC$, and 
$\Ft$ is a Cartan subalgebra of $\Fg$. 
We denote by $\pair{\cdot\,}{\cdot} : \Ft^{\ast} \times \Ft \rightarrow \BC$ 
the canonical pairing, where $\Ft^{\ast} := \Hom_{\BC}(\Ft, \BC)$. 
Let $\Delta \subset \Ft^{\ast}$ be the root system of $\Fg$, 
$\Delta^{+} \subset \Delta$ the set of positive roots, 
and $\{ \alpha_{j} \}_{j \in I} \subset \Delta^{+}$ the set of simple roots.
Denote by $\Delta^{-}:=-\Delta^{+}$ the set of negative roots. 
We denote by $\alpha^{\vee} \in \Ft$ the coroot of $\alpha \in \Delta$. 
Also, we denote by $\theta \in \Delta^{+}$ the highest root of $\Delta$, 
and set $\rho := (1/2) \sum_{\alpha \in \Delta^{+}} \alpha$. 
The root lattice $Q$ and the coroot lattice $Q^{\vee}$ of $\Fg$ are defined by 
$Q := \bigoplus_{j \in I} \BZ \alpha_{j}$ and 
$Q^{\vee} := \bigoplus_{j \in I} \BZ \alpha_{j}^{\vee}$, respectively. 
We set $Q^{\vee,+} := \sum_{j \in I} \BZ_{\ge 0} \alpha_{j}^{\vee}$. 
For $\xi,\zeta \in \Ft$, we write $\xi \ge \zeta$ if $\xi-\zeta \in Q^{\vee,+}$. 
For $i \in I$, the weight $\vpi_{i} \in \Ft^{\ast}$ 
given by $\pair{\vpi_{i}}{\alpha_{j}^{\vee}} = \delta_{i,j}$ for all $j \in I$, 
with $\delta_{i,j}$ the Kronecker delta, is called the $i$-th fundamental weight.
Denote by $\Lambda := \bigoplus_{j \in I} \BZ \vpi_{j}$
the (integral) weight lattice of $\Fg$, 
and by $\Lambda^{+} := \sum_{j \in I} \BZ_{\ge 0} \vpi_{j}$
the set of dominant (integral) weights. 
We denote by $\BZ[\Lambda]$ the group algebra of $\Lambda$, that is, 
the associative $\BZ$-algebra with a $\BZ$-basis $\bigl\{ \be^{\nu} \mid \nu \in \Lambda \bigr\}$, 
where the product is defined by $\be^{\mu} \be^{\nu} : = 
\be^{\mu + \nu}$ for $\mu,\,\nu \in \Lambda$. 

A reflection $s_{\alpha} \in GL(\Ft^{\ast})$, $\alpha \in \Delta$, 
is defined by $s_{\alpha} \mu := \mu - \pair{\mu}{\alpha^{\vee}} \alpha$ 
for $\mu \in \Ft^{\ast}$. Denote by $s_{j} := s_{\alpha_{j}}$ for $j \in I$ 
the simple reflection in $\alpha_{j}$. 
The (finite) Weyl group $W$ of $\Fg$ is defined to be the subgroup of $GL(\Ft^{\ast})$ 
generated by $\{ s_{j} \}_{j \in I}$, that is, $W : = \langle s_{j} \mid j \in I \rangle$. 
For $w \in W$, we denote by $\ell(w)$ the length of $w$. 

Let $\J$ be a subset of $I$. We set
\begin{align*}
& \QJ := \bigoplus_{j \in \J} \BZ \alpha_j, \qquad 
  \Delta^{\pm}_{\J} := \Delta^{\pm} \cap \QJ, \qquad
  \rho_{\J}:= (1/2) \sum_{\alpha \in \DJs} \alpha, \\
& \QJv := \bigoplus_{j \in \J} \BZ \alpha_{j}^{\vee}, \qquad
  \QJvp := \sum_{j \in \J} \BZ_{\ge 0} \alpha_{j}^{\vee}, \qquad 
  \WJs := \langle s_{j} \mid j \in \J \rangle. 
\end{align*}
For $w \in W$, let $\mcr{w}=\mcr{w}^{\J}$ and $\mxcr{w}=\mxcr{w}^{\J}$ 
denote the minimal(-length) and maximal(-length) coset representatives for the coset $w\WJs$, 
respectively. 
We set $\WJ := \bigl\{ \mcr{w}^{\J} \mid w \in W \bigr\} \subset W$. 

%
\subsection{The quantum Bruhat graph.}
\label{subsec:QBG}

\begin{dfn} \label{dfn:QBG} 
Let $\J$ be a subset of $I$. 
The (parabolic) quantum Bruhat graph on $\WJ$, 
denoted by $\QBG(\WJ)$, is the ($\DJp$)-labeled
directed graph whose vertices are the elements of $\WJ$ and 
whose edges are of the following form: 
$x \edge{\alpha} y$, with $x, y \in \WJ$ and $\alpha \in \DJp$, 
such that $y = \mcr{x s_{\alpha}}^{\J}$ and either of the following holds: 
(B) $\ell(y) = \ell (x) + 1$; 
(Q) $\ell(y) = \ell (x) + 1 - 2 \pair{\rho-\rho_{\J}}{\alpha^{\vee}}$.
An edge satisfying (B) (resp., (Q)) is called a Bruhat edge (resp., quantum edge). 
When $\J=\emptyset$ (note that $W^{\emptyset}=W$, $\rho_{\emptyset}=0$, 
and $\mcr{x}^{\emptyset}=x$ for all $x \in W$), 
we write $\QBG(W)$ for $\QBG(W^{\emptyset})$.
\end{dfn}


Let 
\begin{equation} \label{eq:dp}
\bp: w = u_{0} \edge{\beta_{1}} u_{1} \edge{\beta_{2}} \cdots \edge{\beta_{r}} u_{r} = v
\end{equation}
be a directed path in the quantum Bruhat graph $\QBG(W) = \QBG(W^{\emptyset})$; 
we simply write $\bp$ as $\bp: w \edge{\beta_1,\dots,\beta_r} v$. 
We set $\ed(\bp) := u_{r} = v$ and $\ell(\bp)=r$. 
A directed path $\bp$ is said to be trivial (resp., non-trivial) one 
if $\ell(\bp)=0$ (resp., $\ell(\bp) > 0$). 
When $\bp$ is non-trivial, 
we call $\IL{\bp} := \beta_{1}$ the initial label of $\bp$. 
For $\bp$ of the form \eqref{eq:dp}, we set
\begin{equation*}
\qwt(\bp) :=  \sum_{ \begin{subarray}{c} 1 \le t \le r \\ 
\text{$u_{t-1} \edge{\beta_t} u_{t}$ is} \\
\text{a quantum edge} \end{subarray} } \beta_{t}^{\vee} \in Q^{\vee,+}. 
\end{equation*}
Let $w,v \in W$, and let $\bp$ be a shortest directed path from $w$ to $v$ in $\QBG(W)$. 
We set $\qwt(w \Rightarrow v) : = \qwt(\bp)$; we know from \cite[Proposition~8.1]{LNSSS1} that 
$\qwt(w \Rightarrow v)$ does not depend on the choice of a shortest directed path $\bp$. 
%
%
\begin{prop}[{\cite[Proposition~8.1]{LNSSS1}}] \label{prop:qwt}
Let $w,v \in W$, and let $\bq$ be 
a directed path (not necessarily, shortest) 
from $w$ to $v$ in $\QBG(W)$. Then we have 
$\qwt(\bq) \ge \qwt(w \Rightarrow v)$. 
\end{prop}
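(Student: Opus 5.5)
The plan is to prove Proposition~\ref{prop:qwt} through the affine (or "quantum") weight interpretation of $\qwt$, combined with the standard deformation argument for paths in $\QBG(W)$. We proceed by induction on the length $\ell(\bq)$ of the given path $\bq$ from $w$ to $v$, comparing it with a shortest path.

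First we recall the local ingredients from \cite{LNSSS1}.

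\begin{itemize}
\item[(a)] \emph{Shortest paths via reflection orders.} For any reflection order, there is a unique label-increasing path from $w$ to $v$ in $\QBG(W)$. This path is shortest, and all shortest paths have the same weight. This is the statement quoted in the definition of $\qwt(w \Rightarrow v)$, which we may take as given.
\item[(b)] \emph{Diamond/triangle lemmas.} If a path contains two consecutive edges $u \edge{\alpha} u' \edge{\beta} u''$ whose labels are out of order in a fixed reflection order, then we can do one of two things. Either we replace the pair by a path $u \edge{\beta'} u''' \edge{\alpha'} u''$ of the same length, or we shorten it to a single edge or to the trivial path when $u''=u$ or $u''=us_\gamma$. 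In both cases the quantum weight does not increase, and it changes by an element of $Q^{\vee,+}$.
\end{itemize}

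Granting these, we run a bubble-sort on $\bq$. We repeatedly apply (b) to adjacent out-of-order pairs of labels. Each move either preserves the length and strictly decreases the number of inversions in the label sequence, or strictly decreases the length. The process therefore terminates in a label-increasing path $\bq'$ from $w$ to $v$. By (a), $\bq'$ is shortest, so $\qwt(\bq')=\qwt(w\Rightarrow v)$. Since each move satisfies $\qwt(\text{old})-\qwt(\text{new})\in Q^{\vee,+}$, we conclude $\qwt(\bq)\ge\qwt(\bq')$.

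An alternative route avoids the combinatorics of (b). We lift paths to the affine Weyl group, or use the characterization of $\qwt(w\Rightarrow v)$ as the minimal degree $d$ such that $v$ is reachable within the "$d$-quantum Bruhat order". That is, $\qwt(w\Rightarrow v)$ is the smallest $d$ with $t_{d}\,v\ge w$ in the semi-infinite Bruhat order. In this picture, each edge of $\bq$ is a covering step in the semi-infinite order, and the quantum edges contribute translations $t_{\beta^\vee}$. Thus a path of weight $\qwt(\bq)$ exhibits the inequality at degree $\qwt(\bq)$, and minimality gives the claim.

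The main obstacle is step (b) in the length-decreasing case. When two consecutive quantum edges collapse, one has to verify that the surviving edge's coroot is bounded by the sum of the two coroots. I would handle this by a rank-two case analysis: the roots $\alpha,\beta$ span a rank-two subsystem, and this is exactly the analysis carried out in \cite{LNSSS1}. In the paper itself we simply cite \cite[Proposition~8.1]{LNSSS1}.
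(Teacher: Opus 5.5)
The paper gives no argument of its own here. It imports the inequality, together with the independence of $\qwt(w\Rightarrow v)$ from the choice of shortest path, from \cite[Proposition~8.1]{LNSSS1}. Your first route is a genuinely different, self-contained reduction, and its skeleton works. Taking (a) as known is legitimate, since the statement presupposes that $\qwt(w\Rightarrow v)$ is well defined; the inequality then follows from Theorem~\ref{thm:inc} by straightening.

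However, the termination step as you wrote it fails. Replacing an adjacent pair $u \edge{\alpha} u' \edge{\beta} u''$ with $\beta \lhd \alpha$ by the label-increasing path $u \edge{\gamma} \cdot \edge{\delta} u''$ does not transpose the two labels. In general $\gamma,\delta$ are new roots of the rank-two subsystem spanned by $\alpha,\beta$, and nothing prevents them from being out of order with labels before or after the pair. So the number of inversions need not drop. The right potential is lexicographic. The segment is shortest, and $\gamma=\alpha$ would force $\delta=\beta$, so the lexicographic minimality in Theorem~\ref{thm:inc} gives $\gamma \lhd \alpha$. Hence every length-preserving move makes the label sequence strictly smaller lexicographically. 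Ordering paths first by length and then lexicographically, with finitely many label sequences of bounded length, gives termination.

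Two further corrections. First, the obstacle you single out does not exist. Every edge of $\QBG(W)$ changes $\ell$ by an odd amount, so for a two-edge segment $\ell(u\Rightarrow u'')\in\{0,2\}$, and a collapse to a single edge never happens.
\begin{itemize}
\item If the labels are strictly out of order, then $\alpha\neq\beta$, so $u''\neq u$. The segment is itself shortest, and the replacement has the same weight by (a).
\item The only collapses come from repeated labels $u \edge{\alpha} us_{\alpha} \edge{\alpha} u$. These remove weight $\alpha^{\vee}\ge 0$, with $\alpha$ necessarily simple.
\end{itemize}
So (b) is immediate from (a) and parity, and no rank-two case analysis is needed.

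Second, the semi-infinite alternative is circular as stated. The claim that $\qwt(w\Rightarrow v)$ is the \emph{least} $d$ for which the semi-infinite comparison holds is essentially the proposition itself. So ``minimality gives the claim'' assumes what is to be proved unless you establish that characterization independently.
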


We set $I_{\af}:=I \sqcup \{0\}$, and 
\begin{equation} \label{eq:zero}
\alpha_{0}:=-\theta \in \Delta^{-}, \qquad 
s_{0}:=s_{\theta} \in W. 
\end{equation}
%
%
\begin{lem} \label{lem:edge}
Let $w \in W$, and $j \in I_{\af}$. 
If $w^{-1}\alpha_{j} \in \Delta^{+}$, then 
there exists an edge $w \edge{w^{-1}\alpha_{j}} s_{j}w$ in $\QBG(W)$. 
This edge is a Bruhat edge (resp., a quantum edge) if $j \ne 0$ (resp., $j = 0$).
\end{lem}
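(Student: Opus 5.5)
Write $\alpha:=w^{-1}\alpha_{j}\in\Delta^{+}$. The plan is to check directly the two requirements in Definition~\ref{dfn:QBG} for $\QBG(W)$: that the target is $ws_{\alpha}$, and that the length condition (B) or (Q) holds. For the first, the standard identity $s_{w^{-1}\beta}=w^{-1}s_{\beta}w$ for $\beta\in\Delta$ gives $ws_{\alpha}=ws_{w^{-1}\alpha_{j}}=s_{\alpha_{j}}w=s_{j}w$. For $j=0$ we use $s_{\alpha_{0}}=s_{-\theta}=s_{\theta}=s_{0}$ by \eqref{eq:zero}. So in every case the target is $s_{j}w=ws_{\alpha}$, and it remains to compute $\ell(s_{j}w)$.

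\emph{Case $j\in I$.} Since $w^{-1}\alpha_{j}\in\Delta^{+}$, the standard exchange criterion gives $\ell(s_{j}w)=\ell(w)+1$. Indeed, $\ell(s_{j}w)=\ell(w^{-1}s_{j})$, and this equals $\ell(w^{-1})+1$ exactly when $w^{-1}\alpha_{j}>0$. This is condition (B), so we get a Bruhat edge.

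\emph{Case $j=0$.} Here $\alpha=-w^{-1}\theta\in\Delta^{+}$, so $w^{-1}\theta\in\Delta^{-}$, and we must show $\ell(s_{\theta}w)=\ell(w)+1-2\pair{\rho}{\alpha^{\vee}}$. I would count inversions, writing $\ell(u)=\#\{\gamma\in\Delta^{+}\mid u\gamma\in\Delta^{-}\}$ and using $\ell(ws_{\alpha})$ with $s_{\theta}w=ws_{\alpha}$.
\begin{itemize}
\item The equality is equivalent to $\ell(w)-\ell(ws_{\alpha})=2\pair{\rho}{\alpha^{\vee}}-1=\sum_{\gamma\in\Delta^{+}}\pair{\gamma}{\alpha^{\vee}}-1$.
\item Alternatively, use the known characterization of quantum edges. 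An edge $u\to us_{\alpha}$ satisfies (Q) iff $\ell(us_{\alpha})=\ell(u)-2\pair{\rho}{\alpha^{\vee}}+1$, the minimal possible value, since the general bound is $\ell(us_{\alpha})\ge\ell(u)-(2\pair{\rho}{\alpha^{\vee}}-1)$.
\end{itemize}

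To prove the equality, split the positive roots $\gamma\neq\alpha$ into pairs $\{\gamma,s_{\alpha}\gamma\}$ when $s_{\alpha}\gamma>0$, and handle the remaining $\gamma$ with $\pair{\gamma}{\alpha^{\vee}}>0$ and $s_{\alpha}\gamma<0$. The key input is that $\theta$ is the highest root. For every $\beta\in\Delta^{+}\setminus\{\theta\}$ we have $\pair{\beta}{\theta^{\vee}}\in\{0,1\}$, and $\pair{\theta}{\theta^{\vee}}=2$. Transporting by $w$, the pairing $\pair{\gamma}{\alpha^{\vee}}=-\pair{w\gamma}{\theta^{\vee}}$ lies in $\{0,\pm1\}$ for $\gamma\ne\pm\alpha$. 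Every $\gamma\in\Delta^{+}$ with $\pair{\gamma}{\alpha^{\vee}}=1$ satisfies $\pair{w\gamma}{\theta^{\vee}}=-1$. Hence $w\gamma$ is a negative root, so $\gamma$ is an inversion of $w$. Its partner $s_{\alpha}\gamma=\gamma-\alpha$ satisfies $ws_{\alpha}\gamma=w\gamma+\theta$, which has pairing $1$ with $\theta^{\vee}$ and therefore, being $\neq\theta$, is a positive root. So these $\gamma$ contribute an inversion of $w$ but not of $ws_{\alpha}$. The root $\alpha$ itself is an inversion of $w$ (as $w\alpha=-\theta$) and not of $ws_{\alpha}$ (as $ws_{\alpha}\alpha=\theta$). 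Roots orthogonal to $\alpha$ contribute equally to both lengths.

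Counting, $\ell(w)-\ell(ws_{\alpha})=1+\#\{\gamma\in\Delta^{+}\mid\pair{\gamma}{\alpha^{\vee}}=1\}$ (counting both members of each pair appropriately). Since $2\pair{\rho}{\alpha^{\vee}}=2+\#\{\gamma\in\Delta^{+}\setminus\{\alpha\}\mid\pair{\gamma}{\alpha^{\vee}}=1\}-\#\{\gamma\in\Delta^{+}\mid\pair{\gamma}{\alpha^{\vee}}=-1\}$, matching these counts yields (Q).

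The main obstacle is this bookkeeping in the case $j=0$. Signs need care, because a positive $\gamma$ may have $s_{\alpha}\gamma$ negative. I expect it to work cleanly once everything is transported by $w$ to pairings with $\theta^{\vee}$, where the highest-root property forces all pairings into $\{0,\pm1\}$ apart from $\pm\theta$.
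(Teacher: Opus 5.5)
The paper states this lemma without proof, treating it as a standard fact, so your argument has to stand on its own. Two parts are fine: the identification of the target, $ws_{\alpha}=s_{j}w$ (using $s_{\alpha_0}=s_{\theta}$), and the case $j\in I$. The strategy for $j=0$ is also the right one: transport everything by $w$ to pairings with $\theta^{\vee}$ and count inversions.

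However, the final count is wrong as written. The formula $\ell(w)-\ell(ws_{\alpha})=1+\#\{\gamma\in\Delta^{+}\mid\pair{\gamma}{\alpha^{\vee}}=1\}$ does not hold in general. It also does not ``match'' your (correct) expression for $2\pair{\rho}{\alpha^{\vee}}$, which contains the extra term $-\#\{\gamma\in\Delta^{+}\mid\pair{\gamma}{\alpha^{\vee}}=-1\}$; the two agree only when that set is empty. For example, in type $A_{2}$ with $w=s_{2}s_{1}$ you get $w^{-1}\alpha_{0}=\alpha_{1}=\alpha$ and $ws_{\alpha}=s_{2}$. So $\ell(w)-\ell(ws_{\alpha})=1=2\pair{\rho}{\alpha_{1}^{\vee}}-1$, whereas your formula gives $1+\#\{\alpha_{1}+\alpha_{2}\}=2$.

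The missing piece is the class of $\gamma\in\Delta^{+}$ with $\pair{\gamma}{\alpha^{\vee}}=-1$. It never appears in your case analysis, which treats only $\gamma=\alpha$, pairing $0$, and pairing $1$. Your own transport argument handles it:
\begin{itemize}
\item $\pair{w\gamma}{\theta^{\vee}}=1$ forces $w\gamma\in\Delta^{+}$.
\item $ws_{\alpha}\gamma=w\gamma-\theta$ has pairing $-1$ with $\theta^{\vee}$, so it is negative.
\end{itemize}
Thus each such $\gamma$ is an inversion of $ws_{\alpha}$ but not of $w$, and contributes $-1$. This gives
$\ell(w)-\ell(ws_{\alpha})=1+\#\{\gamma\in\Delta^{+}\mid\pair{\gamma}{\alpha^{\vee}}=1\}-\#\{\gamma\in\Delta^{+}\mid\pair{\gamma}{\alpha^{\vee}}=-1\}=2\pair{\rho}{\alpha^{\vee}}-1$, which is exactly (Q).

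If you prefer the pairing $\{\gamma,s_{\alpha}\gamma\}$ you announced, it also works: each pair contributes $0$, and only the roots with $\pair{\gamma}{\alpha^{\vee}}=1$ and $\gamma-\alpha\notin\Delta^{+}$ survive. In that case the set in your formula must be restricted to those roots.

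With this correction the proof is complete. The alternative bullet about the ``known characterization of quantum edges'' is not needed. The remark ``being $\neq\theta$'' is superfluous, since any root pairing positively with the dominant coroot $\theta^{\vee}$ is automatically positive.
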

%
%
\begin{lem} \label{lem:qL}
Let $\J$ be a subset of $I$. For $w \in W$, there exists a sequence 
$j_{1},j_{2},\dots,j_{m}$ of elements in $I_{\af}=I \sqcup \{0\}$ such that 
\begin{equation} \label{eq:qL}
(s_{j_{k-1}} \cdots s_{j_{1}}w)^{-1}\alpha_{j_{k}} \in \DJp
\qquad \text{\rm and} \qquad
s_{j_{m}} \cdots s_{j_{1}} w \in \WJs; 
\end{equation}
in this case, if we set $w_{k}:=s_{j_{k}} \cdots s_{j_{1}}w$ for $0 \le k \le m$, then
\begin{equation}
w = w_{0} \edge{ w_{0}^{-1}\alpha_{j_1} } 
    w_{1} \edge{ w_{1}^{-1}\alpha_{j_2} } \cdots \edge{ w_{m-1}^{-1}\alpha_{j_m} } w_{m} \in \WJs. 
\end{equation}
\end{lem}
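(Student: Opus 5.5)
We argue by induction on a suitable distance of $w$ from $\WJs$, repeatedly applying Lemma~\ref{lem:edge}. The length $\ell(\cdot)$ alone is not the right measure, since quantum edges decrease length. Instead I would use the following statement: for any $w \in W$, if $w \notin \WJs$, there exists $j \in I_{\af}$ such that $w^{-1}\alpha_{j} \in \DJp$. Granting this, set $w_{1}:=s_{j}w$ and iterate. Lemma~\ref{lem:edge} then produces the edges $w_{k-1} \edge{w_{k-1}^{-1}\alpha_{j_k}} w_{k}$ in $\QBG(W)$, with labels in $\Delta^{+}$, and in fact in $\DJp$ by construction. This gives the second assertion of the lemma directly from the first.

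To find $j$, consider the set $\Phi(w) := \{\beta \in \Delta \mid w^{-1}\beta \in \DJp\}$. It is nonempty exactly when $w\DJp$ meets $\Delta$ in a way that is not entirely contained in $\Delta^{-}$ after twisting. More concretely, $w \notin \WJs$ implies that $w^{-1}$ sends some simple root or $-\theta$ to a root of $\DJp$. Here is why. If $w^{-1}\alpha_{j} \in \Delta^{-} \cup \DJs$ for all $j \in I_{\af}$, then the affine alcove argument applies. The set $\{\alpha_{j}\}_{j \in I_{\af}}$ positively spans $\Ft^{\ast}$ via the relation $\theta = \sum_{j\in I} a_{j}\alpha_{j}$, so $\sum_{j \in I_{\af}} a_{j} \alpha_{j} = 0$ with $a_{0}=1$ and all $a_{j}>0$. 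Apply $w^{-1}$ and pair with $\vpi_{i}$, where $\J = I \setminus \{i\}$ in the relevant single-node case. In general, pair with $\rho - \rho_{\J}$, which is strictly positive on $\DJp$ and zero on $\DJs$. Then every term $\pair{w^{-1}\alpha_{j}}{\cdot}$ is $\le 0$ while the sum is $0$. Hence all terms vanish, so $w^{-1}\alpha_{j} \in \DJs \cup \Delta^{-}_{\J}$ for all $j \in I$. Thus $w^{-1}$ preserves $Q_{\J}$, which forces $w \in \WJs$. Here one must use the appropriate pairing: take the coweight dual to $\rho-\rho_{\J}$, i.e.\ a linear functional vanishing on $Q_{\J}$ and positive on $\DJp$, such as $\sum_{k \notin \J}\vpi_{k}^{\vee}$.

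Termination is the main obstacle, since the sequence could a priori cycle. I would choose $j$ greedily to decrease a potential. The natural candidate is the height functional $h(w):=\pair{w\,\lambda}{\,\cdot\,}$, where $\lambda:=\sum_{k\notin\J}\vpi_{k}$. Along such a step, $s_{j}w\lambda = w\lambda - \pair{\lambda}{(w^{-1}\alpha_{j})^{\vee}}\alpha_{j}$ with positive coefficient. So for $j \ne 0$ the weight $w\lambda$ strictly decreases in dominance order, while for $j=0$ it increases by a multiple of $\theta$. To control this, I would use the affine picture. View $s_{0}$ as the affine reflection $s_{\theta,1}$ acting on $w\lambda$ translated into the fundamental alcove setting, that is, work with the affine Weyl group acting on the orbit $W\lambda$ embedded via level-one translations. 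There, each step strictly decreases the distance to the dominant chamber, measured by the number of affine hyperplanes separating. Since $W\lambda$ is finite, the process terminates. Once $w\lambda=\lambda$, we have $w\in\WJs$, because the stabilizer of $\lambda$ is $\WJs$. Alternatively, if this potential argument proves awkward, I would instead invoke the known existence of shortest paths in $\QBG(W)$ from $w$ to $\WJs$ together with the characterization of edges in such paths, as in \cite{LNSSS1}.
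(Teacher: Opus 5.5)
\textbf{The gap is termination.} Your reduction of the second assertion to the first via Lemma~\ref{lem:edge} is fine. So is the observation that only $w\lambda$ matters, since $w\in\WJs$ iff $w\lambda=\lambda$. What is missing is an argument that produces a sequence which actually \emph{reaches} $\WJs$.

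The existence step carries no information. Your pairing computation actually shows that $w^{-1}\alpha_j\in\Delta_{\J}$ for all $j\in I$ is impossible when $\J\ne I$, because $w^{-1}$ would map $Q$ into $Q_{\J}$, which has smaller rank. So an admissible $j$ exists for \emph{every} $w$, including $w\in\WJs$ (e.g.\ $w=e$ and $j\in I\setminus\J$). The inference ``which forces $w\in\WJs$'' is a non sequitur.

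Admissible steps can also cycle forever. In type $A_2$ with $\J=\emptyset$ (so $\WJs=\{e\}$ and $\DJp=\Delta^{+}$), the choices $j=2,0,1,0$ give $s_1\to s_2s_1\to s_2\to s_1s_2\to s_1$. Each step satisfies $w_{k-1}^{-1}\alpha_{j_k}\in\Delta^{+}$; for instance $(s_2s_1)^{-1}(-\theta)=\alpha_1$ and $s_\theta s_2s_1=s_2$. Hence no function on $W$ (or on $W\lambda$) decreases along every admissible step, and your claim that ``each step strictly decreases the distance to the dominant chamber'' is false.

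Letting $s_0$ act by $s_{\theta,1}$ sends $\mu$ to $s_\theta\mu+\theta$, which leaves $W\lambda$, so the finiteness of $W\lambda$ is irrelevant. You also never state a rule for choosing $j$. The fallback does not work either: edges of shortest paths in $\QBG(W)$ are right multiplications $u\mapsto us_{\alpha}$, and there is no general way to turn them into left multiplications by $s_j$, $j\in I_{\af}$.

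\textbf{What is needed is a global choice of the whole sequence, not a step-by-step potential.} The paper takes it from \cite[Lemma 6.12 and Definition 6.14]{LNSSS1}. That result gives a path $\mcr{w}=x_0\to\cdots\to x_m=e$ in $\QBG(\WJ)$ with $x_k=\mcr{s_{j_k}x_{k-1}}$ and $x_{k-1}^{-1}\alpha_{j_k}\in\DJp$. The paper then lifts it to $W$ by \cite[Lemma 8.1]{LNSSS2}; the lift is easy because $\WJs$ permutes $\DJp$.

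If you want your affine intuition to become a proof, the measure must be the length of an affine Weyl group element adapted to $w$, not a distance to a fixed chamber. Use the convention $t_{\xi}(\beta+n\delta)=\beta+(n-\pair{\beta}{\xi})\delta$ and the affine $s_0=t_{\theta^{\vee}}s_{\theta}$, whose image in $W$ is the paper's $s_0$. Put $\tilde{z}:=v\,t_{\xi}\,w^{-1}$, where:
\begin{itemize}
\item $\xi\in Q^{\vee}$ is a positive multiple of $\sum_{k\notin\J}\vpi_k^{\vee}$;
\item $v\in\WJs$ maps the positive system $w^{-1}\Delta^{+}\cap\Delta_{\J}$ of $\Delta_{\J}$ onto $\DJs$.
\end{itemize}
A direct check shows that every positive real root $\gamma=\beta+n\delta$ with $\tilde{z}\gamma<0$ satisfies $w^{-1}\beta\in\DJp$. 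Now take any reduced expression $\tilde{z}=s_{j_m}\cdots s_{j_1}$. The affine root $(s_{j_{k-1}}\cdots s_{j_1}w)^{-1}\alpha_{j_k}$ equals $w^{-1}\gamma_k$, where $\gamma_k=s_{j_1}\cdots s_{j_{k-1}}\alpha_{j_k}$ is an inversion of $\tilde{z}$. Its finite part is exactly $w_{k-1}^{-1}\alpha_{j_k}$ in the lemma's notation, and it lies in $\DJp$. Moreover $\tilde{z}w=v\,t_{\xi}$ has finite part $v\in\WJs$.

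In the $A_2$ example this construction gives $\tilde{z}=t_{\rho^{\vee}}s_1=s_0s_1s_2$, which has length $3$. The resulting path is $s_1\to s_2s_1\to\lng\to e$.
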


\begin{proof}
It follows from \cite[Lemma 6.12 and Definition~6.14]{LNSSS1} that there exist 
a sequence $\mcr{w} = x_{0},x_{1},\dots,x_{m-1},x_{m} = e$ of elements in $\WJ$ 
and a sequence $j_{1},j_{2},\dots,j_{m}$ of elements in $I_{\af}$
such that
\begin{equation}
\mcr{w} = x_{0} \edge{ x_{0}^{-1}\alpha_{j_1} } x_{1} 
\edge{ x_{1}^{-1}\alpha_{j_2} } \cdots \edge{ x_{m-1}^{-1}\alpha_{j_m} } x_{m} = e
\end{equation}
in the parabolic quantum Bruhat graph $\QBG(\WJ)$; note that 
$x_{k-1}^{-1}\alpha_{j_k} \in \DJp$ for all $1 \le k \le m$. 
By \cite[Lemma 8.1]{LNSSS2}, we deduce that there exists a directed path 
\begin{equation*}
w = w_{0} \edge{ w_{0}^{-1}\alpha_{j_1} } w_{1} 
\edge{ w_{1}^{-1}\alpha_{j_2} } \cdots 
\edge{ w_{m-1}^{-1}\alpha_{j_m} } w_{m}
\end{equation*}
with $w_{k-1}^{-1}\alpha_{j_k} \in \DJp$ and $\mcr{w_{k}}=x_{k}$ 
for all $1 \le k \le m$. Thus we have proved the lemma. 
\end{proof}
%
%
\begin{lem}[{\cite[Lemma 7.7]{LNSSS1}}] \label{lem:DL}
Let $w,v \in W$, and 
\begin{equation*}
w = u_{0} \edge{\beta_1} u_{1} \edge{\beta_2} \cdots \edge{\beta_r} u_r = v
\end{equation*}
a directed path from $w$ to $v$ in $\QBG(W)$. Let $j \in I_{\af}=I \sqcup \{0\}$. 
\begin{enu}
\item Assume that $w^{-1}\alpha_{j} \in \Delta^{+}$ and $v^{-1}\alpha_{j} \in \Delta^{-}$. 
Let $1 \le t \le r$ be such that $u_{p}^{-1}\alpha_{j} \in \Delta^{+}$ for all $0 \le p \le t-1$ 
and $u_{t}^{-1}\alpha_{j} \in \Delta^{-}$. Then there exists a directed path $\bq$ 
from $s_{j}w$ to $v$ of the form
\begin{equation*}
\bq : s_{j}w = s_{j}u_{0} \edge{\beta_1} \cdots \edge{\beta_{t-1}} s_{j}u_{t-1} = u_{t}
\edge{\beta_{t+1}} u_{t+1} \edge{\beta_{t+2}} \cdots \edge{\beta_{r}} u_{r} = v, 
\end{equation*}
where $\beta_{t} = u_{t-1}^{-1}\alpha_{j}$. 
We have $\ell(\bq) = \ell(\bp)-1$ and $\qwt(\bq) = 
\qwt(\bp) - \delta_{j0}w^{-1}\alpha_{j}^{\vee}$. 
If $\bp$ is a shortest directed path from $w$ to $v$, then 
$\bq$ is a shortest directed path from $s_{j}w$ to $v$. 

\item Assume that $w^{-1}\alpha_{j} \in \Delta^{+}$ and $v^{-1}\alpha_{j} \in \Delta^{-}$. 
Let $1 \le t \le r$ be such that $u_{p}^{-1}\alpha_{j} \in \Delta^{-}$ for all $t \le p \le r$ 
and $u_{t-1}^{-1}\alpha_{j} \in \Delta^{+}$. Then there exists a directed path $\bq$ 
from $w$ to $s_{j}v$ of the form
\begin{equation*}
\bq : w = u_{0} \edge{\beta_1} \cdots \edge{\beta_{t-1}} u_{t-1} = s_{j}u_{t}
\edge{\beta_{t+1}} s_{j}u_{t+1} \edge{\beta_{t+2}} \cdots \edge{\beta_{r}} s_{j}u_{r} = s_{j}v, 
\end{equation*}
with $\beta_{t} = u_{t-1}^{-1}\alpha_{j}$. 
We have $\ell(\bq) = \ell(\bp)-1$ and $\qwt(\bq) = \qwt(\bp) + \delta_{j0}v^{-1}\alpha_{j}^{\vee}$. 
If $\bp$ is a shortest directed path from $w$ to $v$, then 
$\bq$ is a shortest directed path from $w$ to $s_{j}v$.

\item If $w^{-1}\alpha_{j} \in \Delta^{+}$ and $v^{-1}\alpha_{j} \in \Delta^{+}$ or 
if $w^{-1}\alpha_{j} \in \Delta^{-}$ and $v^{-1}\alpha_{j} \in \Delta^{-}$, then 
there exists a directed path $\bq$ from $s_{j}w$ to $s_{j}v$ such that 
$\ell(\bq) = \ell(\bp)$ and 
$\qwt(\bq) = \qwt(\bp) -\delta_{j0}w^{-1}\alpha_{j}^{\vee} + \delta_{j0}v^{-1}\alpha_{j}^{\vee}$. 
If $\bp$ is a shortest directed path from $w$ to $v$, then 
$\bq$ is a shortest directed path from $s_{j}w$ to $s_{j}v$. 
\end{enu}

\end{lem}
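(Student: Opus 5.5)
The plan is to reduce everything to a local ``diamond'' statement about a single edge, and then assemble the three assertions by walking along $\bp$.

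\textbf{Step 1 (local lemma).} Let $u \edge{\beta} v$ be an edge of $\QBG(W)$ and $j \in I_{\af}$. I would show two things.
\begin{enu}
\item[(a)] If $u^{-1}\alpha_{j}$ and $v^{-1}\alpha_{j}$ have the same sign, then $s_{j}u \edge{\beta} s_{j}v$ is again an edge of $\QBG(W)$. Its quantum weight is $\beta^{\vee}$ if the original edge was quantum, and $0$ if it was Bruhat, up to the correction $-\delta_{j0}u^{-1}\alpha_{j}^{\vee} + \delta_{j0}v^{-1}\alpha_{j}^{\vee}$.
\item[(b)] If $u^{-1}\alpha_{j} \in \Delta^{+}$ and $v^{-1}\alpha_{j} \in \Delta^{-}$, then $v = s_{j}u$, that is, $\beta = u^{-1}\alpha_{j}$.
\end{enu}

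For $j \in I$, both parts come from the identity $\ell(s_{j}y) = \ell(y) \pm 1$ according to the sign of $y^{-1}\alpha_{j}$. Indeed, $s_{j}v = (s_{j}u)s_{\beta}$, and in case (a) both lengths shift by the same amount $\pm 1$. Hence the defining condition (B) or (Q) of Definition~\ref{dfn:QBG} is preserved, with the same label and the same type.

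For (b) with $j \in I$, I would argue as follows. Here $\ell(s_{j}u) = \ell(u)+1$ and $\ell(s_{j}v) = \ell(v)-1$. For a Bruhat edge, the lifting property of Bruhat order forces $s_{j}u = v$. For a quantum edge I would compute directly: $s_{\beta}u^{-1}\alpha_{j} = u^{-1}\alpha_{j} - \pair{u^{-1}\alpha_{j}}{\beta^{\vee}}\beta$ is negative. The quantum length condition $\ell(us_{\beta}) = \ell(u)+1-2\pair{\rho}{\beta^{\vee}}$ means that $s_{\beta}$ sends all positive roots $\gamma$ with $\pair{\gamma}{\beta^{\vee}} > 0$, except $\beta$ itself, in the way prescribed by $u$. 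Together these should force $u^{-1}\alpha_{j} = \beta$.

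For $j = 0$, I would pass to the affine Weyl group: $s_{0}$ corresponds to the affine reflection in $-\theta + \delta$. The quantum Bruhat graph edges correspond to Bruhat covers in the semi-infinite (or affine) Bruhat order on $W \ltimes Q^{\vee}$. In that picture $s_{0}$ behaves exactly like a simple reflection, and the translation parts produce the terms $\delta_{j0}u^{-1}\alpha_{j}^{\vee}$ and $\delta_{j0}v^{-1}\alpha_{j}^{\vee}$ in the quantum weight. The case $j=0$, in particular (b) for $j=0$ and the bookkeeping of $\qwt$, is where I expect the main obstacle. I would try first to quote the standard dictionary from \cite{LNSSS1} between $\QBG(W)$ and the affine Bruhat order, and check that the correction terms telescope.

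\textbf{Step 2 (assembly).} For (1), the first index $t$ at which the sign of $u_{p}^{-1}\alpha_{j}$ flips is a mixed edge. By Step~1(b) this edge is $u_{t-1} \edge{\beta_{t}} s_{j}u_{t-1} = u_{t}$ with $\beta_{t} = u_{t-1}^{-1}\alpha_{j}$. The earlier edges all lie in the same-sign case, so Step~1(a) gives edges $s_{j}u_{p-1} \edge{\beta_{p}} s_{j}u_{p}$ for $p < t$. Concatenating these with the unchanged tail gives $\bq$, with $\ell(\bq) = \ell(\bp)-1$. In $\qwt$, the corrections telescope from $s_{j}u_{0}$ up to $s_{j}u_{t-1} = u_{t}$. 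Removing the edge $\beta_{t}$, which is quantum exactly when $j=0$ by Lemma~\ref{lem:edge}, leaves the total change $-\delta_{j0}w^{-1}\alpha_{j}^{\vee}$. Part (2) is symmetric, using the last sign flip and translating the tail. For (3), the number of sign flips along $\bp$ is even, and I would induct on $\ell(\bp)$, applying (1) and (2) at consecutive flips, or Step~1(a) when there are no flips.

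\textbf{Step 3 (shortest paths).} For (1): if $\bq$ were not shortest, a shorter path from $s_{j}w$ to $v$ preceded by the edge $w \edge{w^{-1}\alpha_{j}} s_{j}w$ of Lemma~\ref{lem:edge} would give a path from $w$ to $v$ shorter than $\bp$. Part (2) is analogous. For (3), apply the same construction to a shortest path from $s_{j}w$ to $s_{j}v$; by involutivity this gives a path from $w$ to $v$ of equal length, and comparing lengths in both directions yields the claim.
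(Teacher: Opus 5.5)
Your Step~3 is the paper's own argument. The paper proves only the shortestness assertions. For (1) it prepends the edge $w \edge{w^{-1}\alpha_{j}} s_{j}w$ of Lemma~\ref{lem:edge} to a hypothetical shorter path from $s_{j}w$ to $v$, exactly as you do; its parity remark is not needed. For the existence of $\bq$ and the formulas for $\ell(\bq)$ and $\qwt(\bq)$, the paper gives no argument and simply cites \cite[Lemma~7.7 and its proof]{LNSSS1}. You instead reconstruct that proof from an edge-local diamond lemma plus telescoping. This makes the case $j \in I$ self-contained. The case $j=0$, the only one where $\qwt$ changes, you still defer to the affine/semi-infinite dictionary from the same literature, so the gain over citing is modest. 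Your involution argument for shortestness in (3) is a good choice. The paper's ``similar'' concatenation needs an extra step there: when $v^{-1}\alpha_{j} \in \Delta^{+}$, Lemma~\ref{lem:edge} gives an edge $v \to s_{j}v$, not $s_{j}v \to v$.

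Two points of your reconstruction need repair.

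First, in Step~1(b) for a quantum edge with $j \in I$, your sketch aims at an impossible conclusion. If $\beta = u^{-1}\alpha_{j}$, then $v = s_{j}u$ with $\ell(v) = \ell(u)+1$, which is a Bruhat edge. What is actually true is that the hypothesis never occurs. For a quantum edge $u \edge{\beta} v$ one has $\ell(u) = \ell(v) + \ell(s_{\beta})$; this follows by combining the defining equality with $\ell(s_{\beta}) \le 2\pair{\rho}{\beta^{\vee}} - 1$ and $\ell(u) \le \ell(v) + \ell(s_{\beta})$. Hence if $\ell(s_{j}v) < \ell(v)$, then $\ell(s_{j}u) \le \ell(s_{j}v) + \ell(s_{\beta}) < \ell(u)$, contradicting $u^{-1}\alpha_{j} \in \Delta^{+}$.

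Second, assembling (3) ``by applying (1) and (2) at consecutive flips'' does not work as described. Step~1 says nothing about a flip from $\Delta^{-}$ to $\Delta^{+}$, so you cannot transport the path across such a flip. The fix is easy:
\begin{itemize}
\item If $w^{-1}\alpha_{j}, v^{-1}\alpha_{j} \in \Delta^{+}$ and the sign changes somewhere, take $t$ minimal with $u_{t}^{-1}\alpha_{j} \in \Delta^{-}$. Apply (1) to $u_{0} \to \cdots \to u_{t}$ and keep the tail $u_{t} \to \cdots \to u_{r}$. This gives a path $s_{j}w \to v$ of length $r-1$ and weight $\qwt(\bp) - \delta_{j0}w^{-1}\alpha_{j}^{\vee}$. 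Then append $v \edge{v^{-1}\alpha_{j}} s_{j}v$, which is quantum iff $j=0$.
\item If both are in $\Delta^{-}$, take $t$ maximal with $u_{t-1}^{-1}\alpha_{j} \in \Delta^{+}$. Apply (2) to $u_{t-1} \to \cdots \to u_{r}$, keep the prefix $u_{0} \to \cdots \to u_{t-1}$, and prepend $s_{j}w \edge{-w^{-1}\alpha_{j}} w$.
\end{itemize}
In both cases the length is $r$ and the weight is $\qwt(\bp) - \delta_{j0}w^{-1}\alpha_{j}^{\vee} + \delta_{j0}v^{-1}\alpha_{j}^{\vee}$, as claimed.
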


\begin{proof}
The existence of $\bq$ follows from 
\cite[Lemma 7.7 and its proof]{LNSSS1}. 
Let us show the shortestness of $\bq$. 
We give a proof only for part (1); 
the proofs for the other parts are similar. 
Suppose, for a contradiction, that $\bq$ is not a shortest 
directed path from $s_{j}w$ to $v$. 
Since $\ell(s_{j}w \Rightarrow v) \equiv \ell(\bq)$ mod $2$, 
it follows that $\ell(s_{j}w \Rightarrow v) \le (r-1) - 2 = r-3$. By Lemma~\ref{lem:edge}, 
we have an edge $w \edge{w^{-1}\alpha_{j}} s_{j}w$. 
Concatenating this edge and a shortest directed path from $s_{j}w$ to $v$, 
we obtain a directed path from $w$ to $v$ of length less than or equal to $r-2$. 
Hence, $\ell(w \Rightarrow v) \le r-2 < \ell(\bp)$, 
which contradicts the shortestness of $\bp$. 
\end{proof}
%
%
\begin{dfn}[tilted Bruhat order] \label{dfn:tilted}
For each $w \in W$, we define the $w$-tilted Bruhat order $\tb{w}$ on $W$ as follows:
for $v_{1},v_{2} \in W$, 
%
%
\begin{equation} \label{eq:tilted}
v_{1} \tb{w} v_{2} \iff \ell(w \Rightarrow v_{2}) = 
 \ell(w \Rightarrow v_{1}) + \ell(v_{1} \Rightarrow v_{2}).
\end{equation}
Namely, $v_{1} \tb{w} v_{2}$ if and only if 
there exists a shortest directed path in $\QBG(W)$ 
from $w$ to $v_{2}$ passing through $v_{1}$; 
or equivalently, if and only if 
the concatenation of a shortest directed path 
from $w$ to $v_{1}$ and one from $v_{1}$ to $v_{2}$ 
is one from $w$ to $v_{2}$. 
\end{dfn}
%
%
\begin{prop}[{\cite[Theorem~7.1]{LNSSS1}}] \label{prop:tbmin}
Let $\J$ be a subset of $I$, and let $w \in W$. 
Then each coset $x\WJs$ for $x \in W$ has a unique minimal element
with respect to $\tb{w}${\rm;} we denote it by $\tbmin{x}{\J}{w}$. 
\end{prop}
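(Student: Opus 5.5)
Since $\tb{w}$ is a partial order on the finite set $W$ (transitivity by concatenating shortest paths, antisymmetry because $v_{1} \tb{w} v_{2}$ forces $\ell(w \Rightarrow v_{1}) \le \ell(w \Rightarrow v_{2})$), minimal elements of $x\WJs$ exist. The content of the statement is therefore uniqueness. I would prove it by induction on
\[
N(w) := \min\bigl\{ \ell(w \Rightarrow v) \mid v \in x\WJs \bigr\}.
\]
The base case is $N(w)=0$, i.e.\ $w \in x\WJs$. Then $w \tb{w} v$ for every $v$ (prepend the trivial path), so $w$ itself is the unique minimum.

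For the inductive step, assume $w\WJs \ne x\WJs$. I want $j \in I_{\af}$ with
\[
w^{-1}\alpha_{j} \in \DJp \quad\text{and}\quad x^{-1}\alpha_{j} \in \DJm .
\]
The key observation is that right multiplication by $u \in \WJs$ permutes $\Delta^{+} \setminus \DJs$ and also $\Delta^{-} \setminus \DJsm$. Hence the condition $v^{-1}\alpha_{j} \in \DJm$ holds for \emph{every} $v \in x\WJs$ as soon as it holds for $x$. Granting such a $j$, take any $v \in x\WJs$ and a shortest path $\bp$ from $w$ to $v$. Lemma~\ref{lem:DL}\,(1) yields a shortest path from $s_{j}w$ to $v$ of length $\ell(\bp)-1$, so
\[
\ell(s_{j}w \Rightarrow v) = \ell(w \Rightarrow v) - 1 \quad \text{for all } v \in x\WJs .
\]
(The inequality $\ge$ also follows directly from the edge $w \edge{w^{-1}\alpha_j} s_{j}w$ of Lemma~\ref{lem:edge}.) Plugging this into \eqref{eq:tilted}, for $v_{1},v_{2} \in x\WJs$ the two sides of the defining equality shift by the same constant. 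Therefore $v_{1} \tb{w} v_{2} \iff v_{1} \tb{s_{j}w} v_{2}$, so the restrictions of $\tb{w}$ and $\tb{s_{j}w}$ to $x\WJs$ coincide. Since $N(s_{j}w) = N(w)-1$, the induction hypothesis gives a unique minimal element for $\tb{s_{j}w}$ on $x\WJs$, hence also for $\tb{w}$. This also shows $\tbmin{x}{\J}{w} = \tbmin{x}{\J}{s_{j}w}$.

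The main obstacle is the existence of $j$. I would get it from the parabolic quantum Bruhat graph $\QBG(\WJ)$, in the spirit of Lemma~\ref{lem:qL}. Since $\QBG(\WJ)$ is strongly connected (\cite{LNSSS1}), take a shortest path from $\mcr{w}$ to $\mcr{x}$. I would then use the parabolic analogue of the diamond/descent lemma (\cite[Lemma~6.12 and Lemma~7.7]{LNSSS1}) to show that its first step can be chosen of the form $\mcr{w} \edge{\mcr{w}^{-1}\alpha_{j}} \mcr{s_{j}w}$ with $j \in I_{\af}$, $\mcr{w}^{-1}\alpha_{j} \in \DJp$ and $\mcr{x}^{-1}\alpha_{j} \in \DJm$. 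More precisely, this is a ``left descent'' $j$ separating the two vertices. By the $\WJs$-invariance noted above, the conditions on $\mcr{w}$ and $\mcr{x}$ transfer to $w$ and $x$.

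If this descent-type statement turns out to be awkward to extract, I would instead argue directly. Choose $j$ with $x^{-1}\alpha_{j} \in \DJm$ and $w^{-1}\alpha_{j} \in \Delta^{+}$; such $j$ should exist because $\mcr{x} \ne \mcr{w}$ (compare the affine inversion sets). Then rule out $w^{-1}\alpha_{j} \in \DJs$ by replacing $w$ by a suitable element of $w\WJs$, which does not change the relevant signs.
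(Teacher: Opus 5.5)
Your set-up is fine: $\tb{w}$ is a partial order, and the base case $w\in x\WJs$ is trivial. The reduction in the separating case is also correct. If $w^{-1}\alpha_{j}\in\Delta^{+}$ and $x^{-1}\alpha_{j}\in\DJm$, then Lemma~\ref{lem:DL}\,(1) lowers every distance from $w$ to an element of $x\WJs$ by exactly one, so $\tb{w}$ and $\tb{s_{j}w}$ agree on $x\WJs$.

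The gap is the claim that such a $j$ exists whenever $w\WJs\neq x\WJs$; this is false. Take type $A_{2}$, $\J=\{2\}$, $w=s_{1}$, $x=e$. Since $s_{1}\alpha_{1}=-\alpha_{1}$ and $s_{1}\alpha_{0}=-s_{1}\theta=-\alpha_{2}$, the only $j\in I_{\af}$ with $w^{-1}\alpha_{j}\in\Delta^{+}$ is $j=2$. But $x^{-1}\alpha_{2}=\alpha_{2}\in\DJs$, so $j=2$ does not separate. More generally, if $x\in\WJs$, the only $j$ with $x^{-1}\alpha_{j}\in\DJm$ is $j=0$, so a separating $j$ exists only when $w^{-1}\theta\in\Delta^{-}$.

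Accordingly, the only edge of $\QBG(W^{\J})$ leaving $s_{1}$ is $s_{1}\edge{\theta}s_{2}s_{1}$, which is left multiplication by the non-separating $s_{2}$. So the descent statement you hope to extract from \cite{LNSSS1} does not hold, and your fallback fails on the same example. Besides, you may not replace $w$ by another element of $w\WJs$, since $\tb{w}$ depends on $w$ itself, not on its coset. This is exactly why the proof of Theorem~\ref{thm:main3} must treat Cases 2 and 3 besides Case 1. Your parameter $N$ also need not drop in those cases: in the example, $N(s_{1})=N(s_{2}s_{1})=1$.

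What is missing is the treatment of $x^{-1}\alpha_{j}\in\DJp$ and of $x^{-1}\alpha_{j}\in\Delta_{\J}$, together with an induction that does not depend on the target coset. One way to do this:
\begin{enumerate}
\item Prove the statement for all cosets simultaneously, by induction along a sequence as in Lemma~\ref{lem:qL} for $\J=\emptyset$. Such a sequence moves $w$ to $e$ by steps $w\mapsto s_{j}w$ with $w^{-1}\alpha_{j}\in\Delta^{+}$.
\item For the base case $w=e$, the order $\tb{e}$ is the Bruhat order, whose minimum on $x\WJs$ is $\mcr{x}$.
\item If $x^{-1}\alpha_{j}\in\DJp$, Lemma~\ref{lem:DL}\,(3) shows that $v\mapsto s_{j}v$ is an order isomorphism $(x\WJs,\tb{w})\to(s_{j}x\WJs,\tb{s_{j}w})$.
\item If $x^{-1}\alpha_{j}\in\Delta_{\J}$, the coset is $s_{j}$-stable and splits as $C^{+}\sqcup C^{-}$ according to the sign of $v^{-1}\alpha_{j}$. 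Lemma~\ref{lem:DL} gives $v\tb{w}s_{j}v$ for $v\in C^{+}$. Let $u$ be the $\tb{s_{j}w}$-minimum; a short computation with parts (1)--(3) shows that whichever of $u$, $s_{j}u$ lies in $C^{+}$ is the $\tb{w}$-minimum.
\end{enumerate}
Without the last two cases the argument does not go through. For comparison, the paper gives no proof of this proposition; it simply quotes \cite[Theorem~7.1]{LNSSS1}.
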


Let $\lhd$ be a reflection (convex) order on $\Delta^{+}$; 
see, e.g., \cite[Section 2.2]{KNS}.
A directed path $\bp: w \edge{\beta_1,\dots,\beta_r} v$ of the form \eqref{eq:dp} 
is said to be label-increasing with respect to $\lhd$ if $\beta_{1} \lhd \cdots \lhd \beta_{r}$. 
%
%
\begin{thm}[{see, e.g., \cite[Theorem~7.4]{LNSSS1}}] \label{thm:inc}
For each $w,v \in W$, 
there exists a unique label-increasing directed path from $w$ to $v$ 
in the quantum Bruhat graph $\QBG(W)$. Moreover, it is a shortest directed path 
from $w$ to $v$, and lexicographically-minimal among the shortest directed paths 
from $w$ to $v$ in the following sense: 
If $\bp:w \edge{\beta_1,\dots,\beta_r} v$ is the label-increasing 
directed path from $w$ to $v$ (note that $r=\ell(w \Rightarrow v)$) and 
$\bq : w \edge{\gamma_1,\dots,\gamma_r} v$ is a shortest directed path 
from $w$ to $v$ with $\bq \ne \bp$, 
then there exists $1 \le t \le r$ such that 
$\beta_{p} = \gamma_{p}$ for all $1 \le p \le t-1$ and 
$\beta_{t} \lhd \gamma_{t}$. 
\end{thm}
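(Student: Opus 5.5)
The plan is to prove the three assertions in this order: existence, shortestness, uniqueness. The lexicographic minimality then comes out of the existence argument. Everything is reduced to a local statement about two consecutive edges, which in turn reduces to a rank-two computation.

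\emph{Local rank-two analysis.} Let $u \edge{\beta} u s_{\beta} \edge{\gamma} u s_{\beta}s_{\gamma}$ be a length-two path in $\QBG(W)$ with $\gamma \lhd \beta$ or $\gamma=\beta$. Let $\Phi$ be the rank-two root subsystem $\Delta \cap (\mathbb{R}\beta+\mathbb{R}\gamma)$. Since $\lhd$ is a reflection order, its restriction to $\Phi^{+}$ is one of the two reflection orders of the dihedral group $W_{\Phi}$. The edges of $\QBG(W)$ with labels in $\Phi^{+}$ starting from the coset $uW_{\Phi}$ behave like a (possibly twisted) quantum Bruhat graph of $W_{\Phi}$. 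I would verify the following by a case check over the types $A_1\times A_1$, $A_2$, $B_2$, $G_2$, keeping track of whether each edge is Bruhat or quantum through the length conditions in Definition~\ref{dfn:QBG}. The two-step path can be replaced by a path from $u$ to $u s_{\beta}s_{\gamma}$ with labels in $\Phi^{+}$ that is either strictly shorter, or of the same length, label-increasing, and lexicographically smaller. For $\gamma=\beta$ the path returns to $u$, and the replacement is the trivial path.

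\emph{Existence and lexicographic minimality.} Take a shortest directed path from $w$ to $v$ that is lexicographically minimal with respect to $\lhd$ among all shortest paths. If it had a descent $\beta_{t} \unrhd \beta_{t+1}$, the local move would give either a shorter path, which is impossible, or a lexicographically smaller shortest path, which is also impossible. Hence this path is label-increasing, and it is lexicographically minimal by construction.

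\emph{Shortestness and uniqueness.} Both follow once we show there is at most one label-increasing path from $w$ to $v$ of any length. Indeed, the lex-minimal shortest path is then the only label-increasing path, so it is shortest. For uniqueness I would use the quantum Bruhat operators $R_{\alpha}$ on $\mathbb{Z}[W][q]$, where $R_{\alpha}u = q^{\alpha^{\vee}} u s_{\alpha}$ or $u s_{\alpha}$ according to whether $u \edge{\alpha} us_{\alpha}$ is a quantum or a Bruhat edge, and $R_{\alpha}u=0$ otherwise. These operators satisfy the quantum Yang--Baxter relations of Brenti--Fomin--Postnikov; this is the global form of the rank-two analysis above. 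It follows that the ordered product $\prod_{\alpha}^{\lhd}(1+R_{\alpha})$ is independent of the choice of reflection order $\lhd$. Applied to $w$, this product counts label-increasing paths from $w$ with weight $q^{\qwt}$. For a particular reflection order adapted to a reduced word of $\lng$, one computes directly that each $v$ occurs exactly once. Independence of the order then gives uniqueness for every $\lhd$.

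The hard step is this uniqueness. Specifically, it requires establishing the quantum Yang--Baxter relations, which needs the rank-two case analysis including quantum edges. It also requires checking that invariance of the product under changing the reflection order holds uniformly; I would pass between reflection orders by elementary rank-two reversals. If this becomes unwieldy, my fallback is an induction on $\ell(w \Rightarrow v)$ using Lemma~\ref{lem:DL}. One would fix $j \in I_{\af}$ with $w^{-1}\alpha_{j}$ the $\lhd$-minimal possible initial label, and compare label-increasing paths from $w$ and from $s_{j}w$. This is the delicate point, and I would check it first against the rank-two examples.
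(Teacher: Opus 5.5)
The paper gives no proof of this statement; it quotes it from \cite[Theorem~7.4]{LNSSS1}, which goes back to Brenti--Fomin--Postnikov. Judged on its own terms, your reduction of existence and lexicographic minimality to a two-step exchange lemma is sound: a lexicographically minimal shortest path cannot contain a descent. You are also right that uniqueness would then give both shortestness and the lexicographic statement.

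The exchange lemma itself, however, is not a finite check over $A_1\times A_1$, $A_2$, $B_2$, $G_2$. Whether an edge inside $uW_\Phi$ is Bruhat or quantum is decided by $\ell$ on $W$ and by $\langle\rho,\alpha^\vee\rangle$ for the global $\rho$. So the restricted graph is not a quantum Bruhat graph of $W_\Phi$. For example, in type $A_3$ with $\Phi$ spanned by $\alpha_1$ and $\alpha_2+\alpha_3$, a quantum $\theta$-edge lowers the length by $5$, not by $\ell_\Phi(s_\theta)=3$. Controlling this is the real content of the quantum Yang--Baxter analysis, and your sketch leaves it open.

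The genuine gap is uniqueness.

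\emph{The special order singles out nothing.} Every reflection order on $\Delta^{+}$ comes from some reduced word of $\lng$, so choosing ``a particular order adapted to a reduced word'' is no restriction. For any fixed $\lhd$, the claim that each $v$ occurs exactly once in $\prod^{\lhd}(1+R_\alpha)\,w$ is literally the uniqueness part of the theorem for that $\lhd$. Already for Bruhat edges alone it is Dyer's shellability theorem. You give no computation for it. So order-independence, even granted, only transfers an unproved statement from one order to another.

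\emph{Order-independence needs more than your exchange lemma.} It requires the full dihedral identity $\prod^{\to}_{\alpha\in\Phi^+}(1+R_\alpha)=\prod^{\leftarrow}_{\alpha\in\Phi^+}(1+R_\alpha)$. This involves paths of length up to $\#\Phi^{+}$ and does not follow from a two-step exchange, so it too would have to be imported from Brenti--Fomin--Postnikov.

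\emph{The fallback is not yet an argument.} The route via Lemma~\ref{lem:DL} does not say what is being compared, or why the number of label-increasing paths is preserved in passing from $w$ to $s_jw$.

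So, granting the exchange lemma, what you have established is only that the lexicographically minimal shortest path from $w$ to $v$ is label-increasing. The missing step is that no other label-increasing path from $w$ to $v$ exists, of any length.
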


Let $\J$ be a subset of $I$. 
As in \cite[(2.4)]{KNS}, let $\lhd$ be an arbitrary reflection 
order on $\Delta^{+}$ satisfying the condition that 
%
%
\begin{equation} \label{eq:ro}
\gamma \lhd \beta \quad 
\text{for all $\gamma \in \DJs$ and $\beta \in \DJp$}. 
\end{equation}
For $w \in W$, let $\bQBG{w}$ denote 
the set of all label-increasing directed paths $\bp$ 
in $\QBG(W)$ starting at $w$, and 
satisfying the condition that 
all the labels of the edges in $\bp$ are contained in $\DJp$: 
%
%
\begin{equation} \label{eq:bQBG}
\bp: 
\underbrace{w = z_{0} \edge{\beta_{1}} z_{1} \edge{\beta_{2}} \cdots \edge{\beta_{r}} z_{r},}_{
\text{directed path in $\QBG(W)$}} \quad \text{where} \quad 
\begin{cases}
r \ge 0, \\[1mm]
\text{$\beta_{t} \in \DJp$, $1 \le t \le r$}, \\[1mm]
\beta_{1} \lhd \beta_{2} \lhd \cdots \lhd \beta_{r}; 
\end{cases}
\end{equation}
note that $\bp$ is a shortest directed path from $w$ to $z_{r}=\ed(\bp)$. 
Let $\bt_{w}$ denote the trivial directed path (of length $0$) 
starting at $w$ and ending at $w$; 
note that $\bt_{w} \in \bQBG{w}$. 

\begin{lem} \label{lem:WJs}
Let $\J$ be a subset of $I$, and let $x \in W$. 
If $v_{1},v_{2} \in x\WJs$, then the labels of 
a shortest directed path from $v_{1}$ to $v_{2}$ are contained in $\DJs$. 
In particular, $\qwt(v_{1} \Rightarrow v_{2}) \in \QJvp$. 
\end{lem}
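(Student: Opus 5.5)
We will take as the basic invariant the image of an element $u \in W$ modulo $\WJs$ in the weight lattice, namely $u\vpi_{i}$; it depends only on the coset $u\WJs$, because every $s_{j}$ with $j \in \J$ fixes $\vpi_{i}$. The point is to show that every edge of a shortest directed path from $v_{1}$ to $v_{2}$ stays inside the coset $x\WJs$ and is labeled by a root in $\DJs$.

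First, observe that if $u \edge{\beta} us_{\beta}$ is any edge with $\beta \in \DJs$, then $us_{\beta} \in u\WJs$. The main task is the converse direction along a shortest path. For this we will use the label-increasing path. Choose a reflection order $\lhd$ satisfying \eqref{eq:ro}. By Theorem~\ref{thm:inc}, the unique label-increasing path $\bp : v_{1} \edge{\beta_{1},\dots,\beta_{r}} v_{2}$ is a shortest path. On the other hand, $v_{1}^{-1}v_{2} \in \WJs$. We will build a shortest path with labels in $\DJs$ by working inside the parabolic subgroup. Write $v_{1} = \mcr{v_{1}}\, y_{1}$ and $v_{2} = \mcr{v_{1}}\, y_{2}$ with $y_{1},y_{2} \in \WJs$.

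The key step is to compare $\QBG(W)$ restricted to $x\WJs$ with $\QBG(\WJs)$, i.e., the quantum Bruhat graph of the root subsystem $\Delta_{\J}$. For $\beta \in \DJs$, we will check that $us_{\beta}$ is a Bruhat (resp. quantum) edge in $\QBG(W)$ if and only if $y s_{\beta}$ is one in $\QBG(\WJs)$, where $u = \mcr{u}\,y$. The Bruhat case follows from $\ell(\mcr{u}y) = \ell(\mcr{u}) + \ell(y)$. The quantum case needs $\pair{\rho}{\beta^{\vee}} = \pair{\rho_{\J}}{\beta^{\vee}}$ for $\beta \in \DJs$. I expect this to hold only for simple $\beta$ in general, which is a genuine obstacle. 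So instead we will use a different route: there always exists some path of labels in $\DJs$ from $v_{1}$ to $v_{2}$, by Lemma~\ref{lem:qL} applied to left multiplication, or more simply through Bruhat edges via $s_{j}$, $j \in \J$. The strategy is then a uniqueness argument rather than a direct identification of graphs.

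Concretely, the plan is as follows. Take the label-increasing shortest path $\bp$ and suppose some label lies in $\DJp$. Let $t$ be the first index with $\beta_{t} \in \DJp$. By \eqref{eq:ro}, all subsequent labels also lie in $\DJp$, so $u_{t-1} \in v_{1}\WJs$. We then need to derive a contradiction from the fact that a label-increasing path with labels in $\DJp$ starting at $u_{t-1}$ returns to the coset $u_{t-1}\WJs$. For this we will invoke the parabolic projection of \cite[Lemma~6.12, Lemma~8.1]{LNSSS2}, already used in Lemma~\ref{lem:qL}. A label-increasing path with labels in $\DJp$ projects to a path in $\QBG(\WJ)$ from $\mcr{u_{t-1}}$ to $\mcr{v_{2}} = \mcr{u_{t-1}}$ which is shortest (as in \cite{LNSSS1}, Theorem~6.5/7.x: the projection of paths in $\bQBG{w}$ are shortest in the parabolic graph). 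Hence it has length $0$, forcing $t > r$. This is the step I expect to be the main obstacle, since it requires citing the precise parabolic projection result. Once it is in place, all labels of $\bp$ lie in $\DJs$.

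Finally, for an arbitrary shortest path (not necessarily label-increasing), we will apply the same argument with a reflection order chosen so that the offending label comes first in the relevant sense. Alternatively, we will argue via $\qwt$: $\qwt$ is the same for all shortest paths by Proposition~\ref{prop:qwt}. Computing it from $\bp$ gives $\qwt(v_{1} \Rightarrow v_{2}) \in \QJvp$, which is the ``in particular'' statement. For the label claim for every shortest path, we will use that every shortest path is label-increasing for some reflection order, and that any reflection order can be modified to satisfy \eqref{eq:ro} without destroying increasingness. If that fails, we will prove the claim only along $\bp$, which suffices for the $\qwt$ statement.
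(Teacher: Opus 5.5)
\textbf{Gap: the projection step.} Your key step is false as stated. The projection of a path in $\bQBG{w}$ to $\WJ$ need not be a path in $\QBG(\WJ)$, let alone a shortest one. For instance, take type $A_{2}$ with $\J=\{1\}$. Then $s_{2} \edge{\alpha_{2}} e$ is a quantum edge of $\QBG(W)$ with $\alpha_{2}\in\DJp$. However, $s_{2}\to e$ is not an edge of $\QBG(\WJ)$: one computes $2\pair{\rho-\rho_{\J}}{\alpha_{2}^{\vee}}=3$, so neither (B) nor (Q) holds, and the distance from $s_{2}$ to $e$ in $\QBG(\WJ)$ is $2$. The projection statements you could cite carry extra hypotheses, e.g.\ that the edges lie in $\QBG_{b\lambda}(W)$ with $0<b<1$. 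So your argument gives no reason why the $\DJp$-labelled tail of $\bp$ must be trivial.

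\textbf{The abandoned route works.} The obstacle that made you drop the direct comparison is not real. The vector $\rho-\rho_{\J}=\frac{1}{2}\sum_{\alpha\in\DJp}\alpha$ is fixed by $\WJs$, since each $s_{j}$, $j\in\J$, permutes $\DJp$. Hence $\pair{\rho}{\gamma^{\vee}}=\pair{\rho_{\J}}{\gamma^{\vee}}$ for \emph{every} $\gamma\in\Delta_{\J}$, not only simple ones. Combined with $\ell(\mcr{x}y)=\ell(\mcr{x})+\ell(y)$, this shows that $y\mapsto\mcr{x}y$ sends Bruhat (resp.\ quantum) edges of $\QBG(\WJs)$ to Bruhat (resp.\ quantum) edges of $\QBG(W)$ with the same labels. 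So the label-increasing path from $y_{1}$ to $y_{2}$ in $\QBG(\WJs)$, for the restricted order, lifts to a label-increasing path from $v_{1}$ to $v_{2}$. By the uniqueness in Theorem~\ref{thm:inc}, this lift is the label-increasing path. This is exactly the paper's Claim, and it settles your statement about $\bp$ without any projection to $\WJ$.

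\textbf{Gap: arbitrary shortest paths.} Your last paragraph leaves the main assertion, which concerns \emph{every} shortest path, unproved. The claim that each shortest path is label-increasing for some reflection order compatible with \eqref{eq:ro} is unjustified, and your fallback only yields the $\qwt$ statement. The missing device is a reflection order $\prec$ in which all of $\DJp$ precedes all of $\DJs$, for example the reverse of $\lhd$. By the lifting argument, the $\prec$-label-increasing path from $v_{1}$ to $v_{2}$ has initial label $\gamma\in\DJs$. By lexicographic minimality, the first label $\phi_{1}$ of any shortest path from $v_{1}$ to $v_{2}$ satisfies $\gamma\preceq\phi_{1}$. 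Hence $\phi_{1}\in\DJs$ and $v_{1}s_{\phi_{1}}\in x\WJs$, and induction on $\ell(v_{1}\Rightarrow v_{2})$ finishes the proof. With your order $\lhd$, where $\DJs$ comes first, lexicographic minimality gives no information, which is why the order has to be reversed.
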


\begin{proof}
In this proof, we fix a reflection order $\prec$ satisfying 
the condition that $\beta \prec \gamma$ 
for $\beta \in \DJp$ and $\gamma \in \DJs$. 
We first show the following claim. 

\begin{claim} \label{c:WJs}
Let $x_{1},x_{2} \in x\WJs$. The labels of 
the label-increasing directed path (with respect to $\prec$) 
from $x_{1}$ to $x_{2}$ are all contained in $\DJs$.
\end{claim}

\noindent
{\it Proof of Claim~\ref{c:WJs}.}
Write $x_{1}=\mcr{x}y_{1}$ and $x_{2}=\mcr{x}y_{2}$ 
with $y_{1},y_{2} \in \WJs$, respectively, 
where $\mcr{x}=\mcr{x}^{\J}$ is the minimal coset representative for $x \WJs$. 
Here we remark that $\WJs$ is the Weyl group for 
the root system $\Delta_{\J}$, and the restriction of $\prec$ to 
$\DJs$ gives a reflection order on the positive root system $\DJs$. Hence 
there exists a label-increasing directed path 
\begin{equation*}
y_{1} = z_{0} \edge{\gamma_{1}} \cdots \edge{\gamma_{r}} z_{r} = y_{2}
\end{equation*}
from $y_{1}$ to $y_{2}$ in the quantum Bruhat graph $\QBG(\WJs)$ for $\WJs$, 
where $\gamma_{1},\dots,\gamma_{r} \in \DJs$ and $\gamma_{1} \prec \cdots \prec \gamma_{r}$. 
Because $\ell(\mcr{x}z) = \ell(\mcr{x}) + \ell(z)$ for all $z \in \WJs$, we deduce that 
\begin{equation*}
x_{1}=\mcr{x}y_{1} = \mcr{x}z_{0} \edge{\gamma_{1}} 
\cdots \edge{\gamma_{r}} \mcr{x}z_{r} = \mcr{x}y_{2} = x_{2}
\end{equation*}
is a directed path in $\QBG(W)$; notice that this is a label-increasing directed path 
from $v_{1}$ to $v_{2}$. Thus we have shown the claim. \bqed

\medskip

Now, let $v_{1},v_{2} \in x\WJs$, and set $r:=\ell(v_{1} \Rightarrow v_{2})$. 
We show by induction on $r$ that if 
\begin{equation*}
v_{1}=u_{0} \edge{\phi_{1}} \cdots \edge{\phi_{r}} u_{r} = v_{2}
\end{equation*}
is an arbitrary shortest directed path from $v_{1}$ to $v_{2}$ in $\QBG(W)$, 
then $\phi_{t} \in \DJs$ for all $1 \le t \le r$. 
If $r=0$, then the assertion is obvious. Assume that $r > 0$. 
By Claim~\ref{c:WJs}, the initial label $\gamma$ of 
the label-increasing directed path 
from $x_{1}$ to $x_{2}$ is contained in $\DJs$. 
Thus, by the lexicographic-minimality of 
label-increasing directed paths (see Theorem~\ref{thm:inc}), 
we have $\gamma \preceq \phi_{1}$, which implies that $\phi_{1} \in \DJs$ 
and $u_{1}=u_{0}s_{\phi_1} = v_{1}s_{\phi_{1}} \in x\WJs$. 
By applying the induction hypothesis to 
the shortest directed path 
$u_{1} \edge{\phi_{2}} \cdots \edge{\phi_{r}} u_{r} = v_{2}$ 
from $u_{1}$ to $v_{2}$, 
we get $\phi_{t} \in \DJs$ for all $2 \le t \le r$. 
Thus we have proved the lemma. 
\end{proof}

%
\subsection{Quantum Lakshmibai-Seshadri paths.}
\label{subsec:QLS}

Let $\lambda \in \Lambda^{+}$ be a dominant (integral) weight, 
and take 
%
%
\begin{equation} \label{eq:J}
\J=\J_{\lambda}:= 
\bigl\{ j \in I \mid \pair{\lambda}{\alpha_{j}^{\vee}}=0 \bigr\}.
\end{equation}
%
%
\begin{dfn} \label{dfn:QBa}
For a rational number $0 \le a < 1$, 
we define $\QBG_{a\lambda}(\WJ)$ (resp., $\QBG_{a\lambda}(W)$) 
to be the subgraph of $\QBG(\WJ)$ (resp., $\QBG(W)$)
with the same vertex set but having only those directed edges of 
the form $x \edge{\alpha} y$ for which 
$a\pair{\lambda}{\alpha^{\vee}} \in \BZ$ holds. 
\end{dfn}
%
%
\begin{dfn}[{\cite[Section~3.2]{LNSSS2}}] \label{dfn:QLS}
A quantum  Lakshmibai-Seshadri path (QLS path for short) 
of shape $\lambda$ is a pair 
%
%
\begin{equation} \label{eq:QLS}
\eta = (\bv \,;\, \ba) = 
(v_{1},\,\dots,\,v_{s} \,;\, a_{0},\,a_{1},\,\dots,\,a_{s}), \quad s \ge 1, 
\end{equation}
consisting of a sequence $v_{1},\,\dots,\,v_{s}$ 
of elements in $\WJ$, with $v_{k} \ne v_{k+1}$ 
for any $1 \le k \le s-1$, and an increasing sequence 
$0 = a_0 < a_1 < \cdots  < a_s =1$ of rational numbers 
satisfying the condition that there exists a directed path 
in $\QBG_{a_{k}\lambda}(\WJ)$ from $v_{k+1}$ to  $v_{k}$ 
for each $k = 1,\,2,\,\dots,\,s-1$. 
\end{dfn}

Let $\QLS(\lambda)$ denote the set of all QLS paths of shape $\lambda$. 
For $\eta \in \QLS(\lambda)$ of the form \eqref{eq:QLS}, we set
\begin{equation} \label{eq:wt}
\wt (\eta) := \sum_{k=1}^{s} (a_{k}-a_{k-1}) v_{k}\lambda \in \Lambda. 
\end{equation}
Let $i \in I$, and consider the case of $\lambda=\vpi_{i}$; 
note that $\J=\J_{\vpi_{i}}$ is identical to $I \setminus \{i\}$ in this case.  
We fix $N = N_{i} \in \BZ_{\ge 1}$ satisfying the following condition: 
\begin{equation} \label{eq:cN}
N/\pair{\vpi_{i}}{\alpha^{\vee}} \in \BZ \quad 
\text{for all $\alpha \in \Delta^{+}$ such that $\pair{\vpi_{i}}{\alpha^{\vee}} \ne 0$}.
\end{equation}
By the definition of QLS paths of shape $\vpi_{i}$, 
we see that if 
\begin{equation} \label{eq:QLS1}
\eta = (v_{1},\,\dots,\,v_{s} \,;\, 
a_{0},\,a_{1},\,\dots,\,a_{s}) \in \QLS(\vpi_{i}),
\end{equation}
then $Na_{k} \in \BZ$ for all $0 \le k \le s$; 
we write $\eta$ as
\begin{equation} \label{eq:QLS2}
\eta = ( \underbrace{ v_{1},\dots,v_{1} }_{N(a_{1}-a_{0}) \text{ times}}, 
\underbrace{ v_{2},\dots,v_{2} }_{N(a_{2}-a_{1}) \text{ times}},\,\dots,\,
\underbrace{ v_{s},\dots,v_{s} }_{N(a_{s}-a_{s-1}) \text{ times}}). 
\end{equation}

Now, let $i \in I$, and set $\J=\J_{\vpi_{i}}=I \setminus \{i\}$ as above. 
Fix a reflection order $\lhd$ such that $\gamma \lhd \beta$ 
for all $\gamma \in \DJs$ and $\beta \in \DJp$; see \eqref{eq:ro}. 
For $w \in W$, we set 
\begin{equation}
\bQLS{w}: = 
\left\{ \bp=(\bp_{N},\dots,\bp_{2},\bp_{1}) \ \Biggm| \ 
\begin{array}{l}
\text{for all $1 \le k \le N$, $\bp_{k} \in \bQBG{\ed(\bp_{k+1})}$, and}  \\[2mm]
\text{$\bp_{k}$ is a directed path in $\QBG_{((k-1)/N)\vpi_{i}}(W)$}
\end{array} \right\}, 
\end{equation}
where $\bp_{N+1}$ is considered to be the trivial directed path $\bt_{w}$, 
and hence $\ed(\bp_{N+1})=w$. We know from \cite[Section~2.3]{LNSX} that 
for $\bp=(\bp_{N},\dots,\bp_{2},\bp_{1}) \in \bQLS{w}$, 
%
%
\begin{equation} \label{eq:etap}
\eta_{\bp} := 
(\mcr{\ed(\bp_{2})}^{\J},\dots,\mcr{\ed(\bp_{N})}^{\J},\mcr{\ed(\bp_{N+1})}^{\J}=\mcr{w}^{\J}) \in \QLS(\vpi_{i}). 
\end{equation}
For $\bp=(\bp_{N},\dots,\bp_{2},\bp_{1}) \in \bQLS{w}$, we set
\begin{equation}
\begin{split}
& \ell(\bp) := \sum_{k = 1}^{N} \ell(\bp_{k}), \qquad
  \ed(\bp) := \ed(\bp_{1}), \\
& \qwt(\bp) := \sum_{k = 1}^{N} \qwt(\bp_{k}), \qquad
  \qwt_{2}(\bp) := \sum_{k = 2}^{N} \qwt(\bp_{k}) = \qwt(\bp)-\qwt(\bp_{1}). 
\end{split}
\end{equation}

%
\subsection{$K$-theoretic Gromov-Witten invariants.}
\label{subsec:KGW}

Fix a Borel subgroup $B$ such that 
$T \subset B \subset G$.
The opposite Borel subgroup $B^{-} \subset G$ is 
the unique Borel subgroup (containing $T$) such that $B\cap B^{-} = T$. 
The Weyl group $W$ of $G$ can be identified with $N_G(T)/T$, 
where $N_G(T)$ is the normalizer of $T$ in $G$. 
Let $X=G/B$ be the flag manifold. 
Any Weyl group element $w \in W$ defines 
the Schubert variety $X_w = \ol{BwB/B}$ and 
the opposite Schubert variety
$X^w=\ol{B^{-}wB/B}$ in $X$; 
note that $\dim X_w = \codim X^w = \ell(w)$.
We denote by $K_T(X)$ 
the Grothendieck group of 
$T$-equivariant algebraic vector bundles on $X = G/B$. 
This ring is an algebra over $K_T(\pt)=R(T)$, 
the representation ring of $T$, which is identified with 
the group algebra $\BZ[\Lambda]$ of $\Lambda$. 
The equivariant $K$-theory ring $K_T(X)$ of 
the flag manifold $X=G/B$ has two $K_T(\pt)$-bases 
$\bigl\{ \CO_{w} \mid w \in W \bigr\}$ and 
$\bigl\{ \CO^{w} \mid w \in W \bigr\}$, 
where $\CO_{w}=[\CO_{X_w}]$ and $\CO^{w}=[\CO_{X^w}]$ are 
the Schubert class and the opposite Schubert class
defined by the structure sheaves of 
the Schubert variety $X_w$ and 
the opposite Schubert variety $X^w$ for $w \in W$, respectively. 
For classes $\gamma_{k} \in K_{T}(X)$, $1 \leq k \leq m$, and $d \in Q^{\vee,+}$, 
we denote by 
\begin{equation*}
\langle \gamma_1, \gamma_2, \ldots ,\gamma_m \rangle_{d} 
\in K_{T}(\pt) = R(T)
\end{equation*}
the corresponding $m$-point ($T$-equivariant) 
$K$-theoretic Gromov-Witten (KGW) invariant; see, e.g., \cite{LNSX} 
(in this paper, we do not use the definition of the KGW invariants). 
%
%
\begin{prop}[{\cite[Lemma~4.1]{LNSX}}] \label{prop:twp}
Let $d \in Q^{\vee,+}$, and $w,x \in W$. 
Then, we have 
%
%
\begin{equation} \label{eq:2pt-qwt}
\langle \CO^{w}, \CO_{x} \rangle_{d} = 
\begin{cases}
1 & \text{\rm if $\qwt(w \Rightarrow x) \le d$}, \\
0 & \text{\rm otherwise}.
\end{cases}
\end{equation}
\end{prop}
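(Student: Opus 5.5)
The plan is to reduce the two-point invariant to an Euler characteristic on $X$ via curve neighborhoods, and then translate the resulting Bruhat-order condition into the quantum Bruhat graph. Although the paper treats this statement as input from \cite{LNSX}, the proof I would give is geometric at the first stage and combinatorial at the second.

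\emph{Geometric reduction.} Let $M_d:=\ol{M}_{0,3}(X,d)$ with evaluation maps $\mathrm{ev}_{k}$, and use the third marked point only to write the two-point invariant as a pushforward. Set $M_d(X_x):=\mathrm{ev}_2^{-1}(X_x)$ and $\Gamma_d(X_x):=\mathrm{ev}_1(M_d(X_x))$, the degree-$d$ curve neighborhood of $X_x$. I would invoke the results of Buch--Mihalcea (and Buch--Chaput--Mihalcea--Perrin) on this object:
\begin{enumerate}
\item $\Gamma_d(X_x)$ is $B$-stable and irreducible, hence equals a Schubert variety $X_{x(d)}$ for some $x(d)\in W$;
\item $M_d(X_x)$ is irreducible with rational singularities, and the restriction of $\mathrm{ev}_1$ to it is cohomologically trivial onto its image, i.e.\ $(\mathrm{ev}_1)_*\CO_{M_d(X_x)}=\CO_{X_{x(d)}}$ and the higher direct images vanish.
\end{enumerate}
The projection formula then gives
\[
\langle \CO^{w},\CO_{x}\rangle_{d}=\chi\bigl(\CO^{w}\cdot\CO_{x(d)}\bigr).
\]
The Richardson variety $X^{w}\cap X_{x(d)}$ is a proper intersection with vanishing higher cohomology of its structure sheaf. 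So this Euler characteristic equals $1$ if $w\le x(d)$ in the Bruhat order, and $0$ otherwise.

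\emph{Combinatorial translation.} It remains to show that $w\le x(d)$ holds if and only if $\qwt(w\Rightarrow x)\le d$. I would use the description of $x(d)$ via chains of $T$-stable curves. Such a curve through a $T$-fixed point $u$ joins $u$ to $us_\alpha$ and has degree $\alpha^\vee$, so $x(d)$ is obtained from $x$ by a Hecke (Demazure) product along a maximal-length chain of such curves of total degree at most $d$. I expect this to be the main obstacle: relating these degree-bounded chains to paths in $\QBG(W)$, whose Bruhat edges have degree $0$ and whose quantum edges labeled $\beta$ have degree $\beta^\vee$.

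I would prove the equivalence by induction on $\ell(w\Rightarrow x)$, using Lemma~\ref{lem:DL}. Via Lemma~\ref{lem:edge}, applying a reflection $s_j$ with $j\in I_{\af}$ at one end of a path is exactly a step along a $T$-curve, with a degree shift only when $j=0$. This matches the Bruhat-order behavior of $x(d)$ under multiplication by $s_j$: for $j\in I$ it is the usual Demazure step, and for $j=0$ the curve class drops by $w^{-1}\theta^{\vee}$.

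For the ``if'' direction, any path from $w$ to $x$ with $\qwt\le d$ realizes a chain of curves. By Proposition~\ref{prop:qwt}, a shortest path has minimal $\qwt$, so $\qwt(w\Rightarrow x)\le d$ is exactly the existence condition. For the ``only if'' direction, from $w\le x(d)$ I would extract such a path by peeling off one curve at a time and applying Lemma~\ref{lem:DL}(1)--(3) at each step, checking that the degree bookkeeping stays within $d$.
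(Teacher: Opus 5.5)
The paper gives no proof of this statement; it imports it from \cite[Lemma~4.1]{LNSX}. Your first stage is the standard curve-neighborhood reduction and is fine. Cohomological triviality of $\mathrm{ev}_1\colon M_d(X_x)\to\Gamma_d(X_x)=X_{x(d)}$ and the Richardson computation reduce the statement to the equivalence $w\le x(d)\iff\qwt(w\Rightarrow x)\le d$.

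\textbf{The gap.} That equivalence is the whole content of the statement, and it is not established. The $j\in I$ part of your induction is correct. $\Gamma_d$ commutes with the minimal parabolic $P_j$, so $(s_j\cdot x)(d)=s_j\cdot x(d)$ (Hecke product). The lifting property of the Bruhat order then matches parts (1) and (3) of Lemma~\ref{lem:DL} with $j\neq0$. But these moves only reduce the problem to the pairs $(u,e)$, i.e.\ to identifying the $T$-fixed points of $\Gamma_d(\mathrm{pt})$ with $\{u\mid\qwt(u\Rightarrow e)\le d\}$. There, if $s_ju<u$, no part of Lemma~\ref{lem:DL} with $j\in I$ relates $\qwt(u\Rightarrow e)$ to a smaller instance, so you genuinely need $j=0$. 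On $G/B$ there is no parabolic subgroup realizing $s_0$. Your rule ``for $j=0$ the curve class drops by $w^{-1}\theta^{\vee}$'' therefore has no geometric source: it is a consequence of the equivalence you want, not an input.

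\textbf{The same gap in the peeling step.} A point of $\Gamma_d(X_x)^T$ is reached from $X_x^T$ by a chain of $T$-stable curves $y\to ys_\gamma$ of class $\gamma^\vee$, with arbitrary $\gamma\in\Delta^+$. Such a curve need not be an edge of $\QBG(W)$. For example, in type $B_2$ the $T$-curve joining $s_1s_2s_1=s_{\alpha_1+\alpha_2}$ and $e$ has class $(\alpha_1+\alpha_2)^\vee=2\alpha_1^\vee+\alpha_2^\vee$. But $\alpha_1+\alpha_2$ is not a quantum root, and a shortest path is $s_1s_2s_1\to s_1s_2\to s_1\to e$, with labels $\alpha_1,\alpha_2,\alpha_1$.

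What you actually need is the bound $\qwt(u\Rightarrow us_\gamma)\le\gamma^\vee$ for all $u\in W$ and $\gamma\in\Delta^+$. Given that bound, the ``only if'' direction follows from the triangle inequality (from Proposition~\ref{prop:qwt}) together with $\qwt(y\Rightarrow x)=0$ for $y\le x$. The bound does not follow from Lemma~\ref{lem:DL}. The standard way to close the gap is to invoke Postnikov's theorem, combined with Fulton--Woodward: the degrees of rational curves meeting $X^w$ and $X_x$ have a unique minimal element, namely $\qwt(w\Rightarrow x)$. Add to this that $\Gamma_d(X_x)$ is monotone in $d$.

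\textbf{Minor point.} In the ``if'' direction, a $\QBG$ path is not literally a chain of curves of class $\qwt$. Bruhat edges are absorbed because $X_{x(d)}$ is closed downward in the Bruhat order, and only the quantum edges, traversed backwards, are realized by $T$-curves.
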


Let $i \in I$, and set $\J=\J_{\vpi_{i}}=I \setminus \{i\}$ as above. 
Also, recall that $\lhd$ is a reflection order such that 
$\gamma \lhd \beta$ for all $\gamma \in \DJs$ and $\beta \in \DJp$, 
and that $N=N_{i}$ satisfies condition \eqref{eq:cN}. 
Let $w,x \in W$, and $d \in Q^{\vee,+}$. We set
%
%
\begin{equation} \label{eq:bQLSwxd}
\bQLS{w,x,d} := 
\bigl\{ \bp=(\bp_{N},\dots,\bp_{2},\bp_{1}) \in \bQLS{w} \mid 
\qwt (\ed(\bp) \Rightarrow x) \le d - \qwt(\bp) \bigr\}. 
\end{equation}
Also, we define
%
%
\begin{equation} \label{eq:pbR}
\bR{w,x,d} := 
\left\{ \bp=(\bp_{N},\dots,\bp_{2},\bp_{1}) \in \bQLS{w,x,d} \ \Biggm| \ 
\begin{array}{c}
\pair{\vpi_{i}}{d - \qwt_{2}(\bp)} = 0 \\[1.5mm]
\ell(\bp_{1})=0, \, \ed(\bp) \in x\WJs
\end{array}
\right\}. 
\end{equation}
In Appendix~\ref{sec:example}, we give some examples of $\bR{w,x,d}$ 
in the case where $\Fg$ is of type $B_{n}$. 
%
%
\begin{thm}[{\cite[Theorem 3.2]{LNSX}}] \label{thm:thp}
Let $i \in I$, and let $w,x \in W$, $d \in Q^{\vee,+}$. 
Then we have 
\begin{equation} \label{eq:thp1}
\langle \CO^{s_i}, \CO^w, \CO_x \rangle_{d} =  
 \langle \CO^w, \CO_x \rangle_{d} - 
 \sum_{ \bp \in \bR{w,x,d} }
(-1)^{\ell(\bp)}\be^{-\vpi_{i}+\wt(\eta_{\bp})}.
\end{equation}
\end{thm}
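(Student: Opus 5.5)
The plan is to derive \eqref{eq:thp1} from the quantum $K$-theoretic Chevalley formula for anti-dominant line bundles, combined with the two-point computation in Proposition~\ref{prop:twp}. It runs in three steps: rewrite $\CO^{s_i}$ through a line bundle, convert the three-point invariant into two-point invariants, then evaluate and simplify.

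\emph{Step 1: rewrite $\CO^{s_i}$.} The divisor $X^{s_i}$ is the zero locus of a $T$-equivariant section of the line bundle $\CO(\vpi_i)$. Hence in $K_T(X)$ we have
\[
\CO^{s_i} = 1 - \be^{-\vpi_i}[\CO(-\vpi_i)],
\]
with the sign and twist conventions fixed as in \cite{LNSX}. Multilinearity of KGW invariants, together with the fundamental class axiom (which turns $\langle 1,\CO^w,\CO_x\rangle_d$ into $\langle \CO^w,\CO_x\rangle_d$), reduces the statement to proving
\[
\langle [\CO(-\vpi_i)],\CO^w,\CO_x\rangle_d=\sum_{\bp\in\bR{w,x,d}}(-1)^{\ell(\bp)}\be^{\wt(\eta_\bp)} .
\]

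\emph{Step 2: convert to two-point invariants.} Three-point invariants with $\CO_x$ as one insertion can be extracted from the quantum product. The idea is to expand $[\CO(-\vpi_i)]\star\CO^w$ in the opposite Schubert basis with coefficients in $R(T)[\![Q]\!]$, then pair with $\CO_x$ through the quantum $K$-metric, whose entries are again two-point invariants. For this one uses the quantum $K$-Chevalley formula of Lenart--Naito--Sagaki for $[\CO(-\vpi_i)]\star\CO^w$. It is given as an alternating sum over quantum walks, which are exactly the tuples $\bp=(\bp_N,\dots,\bp_1)\in\bQLS{w}$, each weighted by $(-1)^{\ell(\bp)}\be^{\wt(\eta_\bp)}Q^{\qwt(\bp)}\CO^{\ed(\bp)}$. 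Pairing each term with $\CO_x$ and reading off degree $d$ produces $\langle\CO^{\ed(\bp)},\CO_x\rangle_{d-\qwt(\bp)}$, corrected by the inverse-metric terms. By Proposition~\ref{prop:twp} and Proposition~\ref{prop:qwt}, each such two-point invariant equals $1$ or $0$ according to whether $\qwt(\ed(\bp)\Rightarrow x)\le d-\qwt(\bp)$; this gives the restriction to $\bQLS{w,x,d}$.

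\emph{Step 3: evaluate the corrections and cancel.} I expect this to be the main obstacle. The inverse metric in quantum $K$-theory contributes terms of the form $\langle\CO^{u},\CO_x\rangle_{d'}-\langle\CO^{u},\CO_x\rangle_{d'-\text{(simple coroot)}}$. These telescope over the $i$-direction of $d$, and only terms with $\pair{\vpi_i}{d-\qwt_2(\bp)}=0$ survive. Separately, the last segment $\bp_1$, which lies in $\QBG_0$ with labels in $\DJp$, must be handled by a sign-reversing involution that toggles its first edge; after this only $\ell(\bp_1)=0$ survives. Finally one checks, using Lemma~\ref{lem:WJs} and Proposition~\ref{prop:tbmin}, that the surviving endpoints must satisfy $\ed(\bp)\in x\WJs$. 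I would first try to build the involution directly from the label-increasing structure of Theorem~\ref{thm:inc}, using Lemma~\ref{lem:DL} to control how $\qwt$ changes when an edge is moved across the coset $x\WJs$. The result is exactly the set $\bR{w,x,d}$ of \eqref{eq:pbR}, which proves \eqref{eq:thp1}.
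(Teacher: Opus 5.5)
\textbf{Gap in Step~3.} The paper does not prove this statement; it quotes it from \cite[Theorem~3.2]{LNSX}, so your argument has to stand on its own. Steps~1 and~2 are a sensible reduction, granting the quantum $K$-Chevalley formula in quantum-walk form. Step~3, which carries all the content, has a genuine gap, and part of it rests on a misconception.

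There are no inverse-metric corrections. The quantum $K$-product is defined by $G(a\star b,c)=\sum_{d}q^{d}\langle a,b,c\rangle_{d}$, where $G(a,b)=\sum_{d}q^{d}\langle a,b\rangle_{d}$ is the quantum $K$-metric. The inverse of $G$ enters only when one extracts structure constants from invariants, not in the other direction. Pairing the Chevalley expansion with $\CO_{x}$ and applying Proposition~\ref{prop:twp} therefore gives exactly $\sum_{\bp\in\bQLS{w,x,d}}(-1)^{\ell(\bp)}\be^{\wt(\eta_{\bp})}$, with nothing left to telescope. The theorem is then equivalent to the purely combinatorial statement that this sum equals the sum over $\bR{w,x,d}$, and the condition $\pair{\vpi_{i}}{d-\qwt_{2}(\bp)}=0$ must come out of that cancellation.

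Your involution cannot produce it. Inserting or deleting an initial edge of $\bp_{1}$ changes its starting point $\ed(\bp_{2})$, so it is not a weight-preserving map on $\bQLS{w,x,d}$. Moreover, the claimed outcome (only $\ell(\bp_{1})=0$ survives) is false. Take $\Fg=\mathfrak{sl}_{2}$ ($N=1$), $w=x=e$, $d=\alpha_{1}^{\vee}$. Then $\bQLS{e,e,d}$ consists of the trivial path and the Bruhat edge $e\edge{\alpha_{1}}s_{1}$, since $\qwt(s_{1}\Rightarrow e)=\alpha_{1}^{\vee}\le d$. These two terms cancel, and indeed $\bR{e,e,d}=\emptyset$. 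For $d=0$, only the trivial path occurs, and it survives. So the cancellation pairs terms with different $\ell(\bp_{1})$ and depends on $x$ and $d$. It is one mechanism that simultaneously enforces $\ell(\bp_{1})=0$, $\ed(\bp)\in x\WJs$, and the condition in the $i$-direction, not three separate steps.

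\textbf{What is missing.} You need the following identity for the last segment. For all $u,x\in W$, with Novikov monomials $q^{\xi}$ and $q_{i}:=q^{\alpha_{i}^{\vee}}$,
\[
\sum_{\bq\in\bQBG{u}}(-1)^{\ell(\bq)}\,q^{\qwt(\bq)+\qwt(\ed(\bq)\Rightarrow x)}
=(1-q_{i})\cdot\begin{cases} q^{\qwt(u\Rightarrow x)} & \text{if $u\in x\WJs$},\\ 0 & \text{otherwise}.\end{cases}
\]
The segment $\bp_{1}$ ranges over all of $\bQBG{\ed(\bp_{2})}$, because $\QBG_{0}(W)=\QBG(W)$. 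Group $\bQLS{w}$ according to $(\bp_{N},\dots,\bp_{2})$ and use $G(\CO^{v},\CO_{x})=q^{\qwt(v\Rightarrow x)}/\prod_{j\in I}(1-q_{j})$. The identity then gives the theorem at once:
\begin{itemize}
\item the case distinction yields $\ell(\bp_{1})=0$ and $\ed(\bp)\in x\WJs$;
\item the factor $1-q_{i}$, together with $\qwt(\ed(\bp)\Rightarrow x)\in\QJvp$ from Lemma~\ref{lem:WJs}, yields $\pair{\vpi_{i}}{d-\qwt_{2}(\bp)}=0$.
\end{itemize}

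When $\vpi_{i}$ is minuscule, this identity is, given your Steps~1--2, equivalent to the theorem itself, so it is the real content. It needs an honest proof, for instance an induction along the sequence of Lemma~\ref{lem:qL} using Lemma~\ref{lem:DL}, in the style of the proofs of Theorems~\ref{thm:main3} and~\ref{thm:hR1even}. Your proposal neither isolates this identity nor proves it.
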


The sum $\sum_{ \bp \in \bR{w,x,d} }
(-1)^{\ell(\bp)}\be^{-\vpi_{i}+\wt(\eta_{\bp})}$ 
on the right-hand side of the formula above is not cancellation-free in general. 
The purpose of this paper is to remove the cancellations from this sum 
in the case where $N=N_{i} \le 2$. 

\section{Main theorem.}
\label{sec:main}

Let $w,x \in W$, and $d=\sum_{j \in I} d_{j}\alpha_{j}^{\vee} \in Q^{\vee,+}$. 
Let $i \in I$, and set $\J=\J_{\vpi_{i}}=I \setminus \{i\}$ as in the previous section. 
Recall that $\lhd$ is a reflection order such that 
$\gamma \lhd \beta$ for all $\gamma \in \DJs$ and $\beta \in \DJp$, 
and that $N=N_{i}$ satisfies condition \eqref{eq:cN}. 

\subsection{Minuscule case.}
\label{subsec:mc}

Assume that $\vpi_{i}$ is minuscule, i.e., 
$\pair{\vpi_{i}}{\beta^{\vee}} \in \{0,1\}$ for all $\beta \in \Delta^{+}$; 
for the list of minuscule fundamental weights, 
see \eqref{eq:list} below. We can take $N=N_{i}:=1$ in this case. 
We deduce that if $\bR{w,x,d} \ne \emptyset$, then
\begin{equation} \label{eq:min}
x\WJs = w\WJs, \qquad d_{i}=0, \qquad 
\text{and} \qquad \qwt(w \Rightarrow x) \le d; 
\end{equation}
in this case, $\bR{w,x,d} = \bigl\{ \bt_{w} \bigr\}$. 
Thus we obtain
\begin{align}
\langle \CO^{s_i}, \CO^{w}, \CO_{x} \rangle_{d} & =  
 \langle \CO^{w}, \CO_{x} \rangle_{d} - 
 \sum_{ \bp \in \bR{w,x,d} }
 (-1)^{\ell(\bp)}\be^{-\vpi_{i}+\wt(\eta_{\bp})} \nonumber \\[3mm]
& = 
 \begin{cases}
 1 - \be^{-\vpi_{i}+w\vpi_{i}} & \text{if \eqref{eq:min} holds}, \\
 1 & \text{if \eqref{eq:min} does not hold, and $\qwt(w \Rightarrow x) \le d$}, \\
 0 & \text{if \eqref{eq:min} does not hold, and $\qwt(w \Rightarrow x) \not\le d$}.
 \end{cases} \label{eq:qkcf1a}
\end{align}

\subsection{Quasi-minuscule case.}
\label{subsec:qmc}

Fix $i \in I$ such that $\vpi_{i}$ is quasi-minuscule in the sense that 
$\pair{\vpi_{i}}{\beta^{\vee}} \in \{0,1,2\}$ for all $\beta \in \Delta^{+}$. 
The following table lists the indices $i$ for which 
$\vpi_{i}$ is minuscule or quasi-minuscule; we use 
the numbering of the Dynkin diagrams from \cite[Section 11.4]{H}:
%
%
\begin{equation} \label{eq:list}
\begin{array}{c||c|c|c}
 & \text{minuscule} & \text{quasi-minuscule} & \\ \hline\hline
A_{n} & \text{all $i \in I$} & \text{all $i \in I$} \\ \hline
B_{n} & n & \text{all $i \in I$} & \text{$\alpha_{n}$ is the unique short simple root} \\ \hline
C_{n} & 1 & \text{all $i \in I$} & \text{$\alpha_{n}$ is the unique long simple root} \\ \hline
D_{n} & 1, n-1, n & \text{all $i \in I$} & \\ \hline
E_{6} & 1,6 & 1,2,3,5,6 & \\ \hline
E_{7} & 7 & 1,2,6,7 & \\ \hline
E_{8} & \text{none} & 1,8 & \\ \hline
F_{4} & \text{none} & 1,4 & \text{$\alpha_{1}$ and $\alpha_{2}$ are long simple roots} \\ \hline
G_{2} & \text{none} & 1 & \text{$\alpha_{2}$ is a long simple root}
\end{array}
\end{equation}
Remark that if $\Fg$ is of classical type, then all the fundamental weights $\vpi_{i}$ 
are quasi-minuscule. We can take $N=N_{i}:=2$ in this case. 
It follows from the definitions that 
\begin{equation*}
\bQLS{w}: = 
\left\{ \bp=(\bp_{2},\bp_{1}) \ \Biggm| \ 
\begin{array}{l}
\text{$\bp_{1} \in \bQBG{\ed(\bp_{2})}$, $\bp_{2} \in \bQBG{w}$, and}  \\[2mm]
\text{$\bp_{2}$ is a directed path in $\QBG_{(1/2)\vpi_{i}}(W)$}
\end{array} \right\}; 
\end{equation*}
\begin{equation*}
\bQLS{w,x,d} := 
\bigl\{ \bp=(\bp_{2},\bp_{1}) \in \bQLS{w} \mid 
\qwt (\ed(\bp) \Rightarrow x) \le d - \qwt(\bp) \bigr\}; 
\end{equation*}
\begin{equation*}
\bR{w,x,d} := 
\left\{ \bp=(\bp_{2},\bp_{1}) \in \bQLS{w,x,d} \ \Biggm| \ 
\begin{array}{c}
\pair{\vpi_{i}}{d - \qwt_{2}(\bp)} = 0 \\[1.5mm]
\ell(\bp_{1})=0, \, \ed(\bp) \in x\WJs
\end{array}
\right\}; 
\end{equation*}
note that if $\bp=(\bp_{2},\bp_{1}) \in \bR{w,x,d}$, then 
$\qwt_{2}(\bp) = \qwt(\bp_{2})$ and $\ed(\bp) = \ed(\bp_{2})$. 
Also we have $\eta_{\bp} = (\mcr{x}^{\J},\mcr{w}^{\J}) \in \QLS(\vpi_{i})$ 
for all $\bp=(\bp_{2},\bp_{1}) \in \bR{w,x,d}$; we set
$\eta = (\mcr{x}^{\J},\mcr{w}^{\J}) \in \QLS(\vpi_{i})$. 
We can rewrite \eqref{eq:thp1} as
%
%
\begin{equation} \label{eq:thp-qm}
\langle \CO^{s_i}, \CO^{w}, \CO_{x} \rangle_{d} =  
 \langle \CO^{w}, \CO_{x} \rangle_{d} - \be^{-\vpi_{i}+\wt(\eta)}
 \sum_{ \bp \in \bR{w,x,d} }
 (-1)^{\ell(\bp)}. 
\end{equation}

\begin{rem}
Let $\bp,\bp' \in \bR{w,x,d}$. 
By the uniqueness of label-increasing directed paths, 
$\ed(\bp)=\ed(\bp')$ if and only if $\bp=\bp'$. 
\end{rem}
%
%
\begin{prop} \label{prop:Rtbmin}
Keep the notation and setting above. 
Assume that $\bR{w,x,d} \ne \emptyset$. 
Let $v \in x\WJs$ be such that $v = \ed(\bp) = \ed(\bp_{2})$ for some (unique) 
$\bp=(\bp_{2},\bp_{1}) \in \bR{w,x,d}$, and let $u \in x\WJs$ be such that $u \tb{w} v$ 
with respect to the $w$-tilted Bruhat order $\tb{w}$. 
Then there exists a unique $\bq = (\bq_{2},\bq_{1}) \in \bR{w,x,d}$ such that 
$u = \ed(\bq) = \ed(\bq_{2})$. Therefore there exists a unique $\bp_{\min} \in \bR{w,x,d}$ 
such that 
\begin{equation}
\ed(\bp_{\min}) = \tbmin{x}{\J}{w}. 
\end{equation}
\end{prop}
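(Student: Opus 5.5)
Since $u \tb{w} v$, there is a shortest directed path from $w$ to $v$ passing through $u$. The plan is to take $\bq_2$ to be the label-increasing directed path from $w$ to $u$ (Theorem~\ref{thm:inc}), to set $\bq_1 := \bt_u$, and to check that $\bq = (\bq_2,\bq_1)$ satisfies all the defining conditions of $\bR{w,x,d}$. Uniqueness of $\bq$ with $\ed(\bq)=u$ then follows from the uniqueness of label-increasing paths (the Remark above). For the final assertion, note that $\bR{w,x,d} \ne \emptyset$ gives some $v \in x\WJs$ of the required form. Since $\tbmin{x}{\J}{w} \tb{w} v$ by minimality (Proposition~\ref{prop:tbmin}), we may apply the first part with $u = \tbmin{x}{\J}{w}$.

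The conditions to verify, in order, are the following.

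First, the labels of $\bq_2$ lie in $\DJp$ and are $\lhd$-increasing, and $\bq_2$ lies in $\QBG_{(1/2)\vpi_i}(W)$, i.e.\ every label $\beta$ satisfies $\pair{\vpi_i}{\beta^\vee} = 2$; equivalently, since $\pair{\vpi_i}{\beta^\vee}\in\{0,1,2\}$, it has value $0$ or $2$, and membership in $\DJp$ rules out $0$.

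Second, $\qwt(u \Rightarrow x) \le d - \qwt(\bq_2)$.

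Third, $\pair{\vpi_i}{d - \qwt(\bq_2)} = 0$.

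Fourth, $u \in x\WJs$, which holds by hypothesis.

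For the first condition, I will compare $\bp_2$ with a shortest path through $u$. Concatenate $\bq_2$ with a shortest path from $u$ to $v$; this is a shortest path $w \to v$. The path $\bp_2$ is the label-increasing, hence lexicographically minimal, shortest path $w \to v$ (Theorem~\ref{thm:inc}), and all its labels have pairing $2$ with $\vpi_i$. The key tool is Lemma~\ref{lem:WJs}: the segment from $u$ to $v$ uses only labels in $\DJs$, since $u,v\in x\WJs$. The labels of any shortest path $w\to v$ should be controlled by the multiset of labels of $\bp_2$ modulo $\DJs$. Concretely, $\pair{\vpi_i}{\cdot}$ applied to the labels of the $u\to v$ part gives $0$. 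I would try to argue that the labels of $\bq_2$ have pairings summing to $\sum_t\pair{\vpi_i}{\beta_t^\vee}=2\ell(\bp_2)-0$, while there are exactly $\ell(\bp_2)-\ell(u\Rightarrow v)$ of them.

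Since $\ell(w\Rightarrow u)=\ell(\bp_2)-\ell(u\Rightarrow v)$, the sum of pairings being $2\ell(w\Rightarrow u)$ with each pairing at most $2$ forces every label to have pairing exactly $2$. To get the sum, I would use the weight identity $w^{-1}$-independent: along any path $y\edge{\beta}ys_\beta$ we have $ys_\beta\vpi_i = y\vpi_i - \pair{\vpi_i}{\beta^\vee}y\beta$. This alone does not give the sum, so instead I will handle it with an inductive argument using Lemma~\ref{lem:DL}, moving $w$ one step. Alternatively, reorder via the label-increasing property: choose the reflection order so that $\DJs$ comes first, and consider the label-increasing path from $w$ to $v$ restricted to its first $\ell(w\Rightarrow u)$ steps. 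I expect this label-pairing step to be the main obstacle.

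For the second and third conditions, use $\qwt$. Because the concatenation through $u$ is shortest, we get $\qwt(\bp_2) = \qwt(\bq_2) + \qwt(u\Rightarrow v)$, and by Lemma~\ref{lem:WJs}, $\qwt(u\Rightarrow v)\in\QJvp$. Hence $\pair{\vpi_i}{\qwt(\bq_2)} = \pair{\vpi_i}{\qwt(\bp_2)} = \pair{\vpi_i}{d}$, which gives the third condition. For the second, the triangle inequality from Proposition~\ref{prop:qwt}, applied to the concatenation $u\to v\to x$, gives
\[
\qwt(u\Rightarrow x) \le \qwt(u\Rightarrow v) + \qwt(v\Rightarrow x) \le \qwt(u\Rightarrow v) + d - \qwt(\bp_2) = d - \qwt(\bq_2),
\]
completing the verification.
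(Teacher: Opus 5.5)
You handle the $\qwt$ inequality, the condition $\pair{\vpi_i}{d-\qwt(\bq_2)}=0$, uniqueness, and the final assertion about $\tbmin{x}{\J}{w}$ correctly, and these parts match the paper. The gap is the first condition, which is the real content of the proposition. You never prove that the labels of $\bq_2$ lie in $\DJp$, nor that they pair to $2$ with $\vpi_i$; you list possible strategies and call this ``the main obstacle'' without resolving it. The sum-of-pairings idea cannot work on its own, because $\sum_t\pair{\vpi_i}{\beta_t^\vee}$ is not an invariant of shortest paths with fixed endpoints. For example, in type $A_2$ the two shortest paths $e \edge{\alpha_1} s_1 \edge{\alpha_2} s_1s_2 \edge{\alpha_1} s_1s_2s_1$ and $e \edge{\alpha_2} s_2 \edge{\alpha_1} s_2s_1 \edge{\alpha_2} s_2s_1s_2$ give sums $2$ and $1$ for $\vpi_1$. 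Restricting $\bp_2$ to its first $\ell(w\Rightarrow u)$ steps does not help either, since that initial segment need not end at $u$.

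Two ingredients are needed, in this order. First, the labels of $\bq_2$ lie in $\DJp$ by lexicographic minimality together with \eqref{eq:ro}. The $\DJs$-labels of the label-increasing path $\bq_2$ all come first. If there were any, the concatenation of $\bq_2$ with a shortest path from $u$ to $v$ would be a shortest path from $w$ to $v$ (because $u\tb{w}v$). Its initial label would lie in $\DJs$ and hence be $\lhd$-smaller than $\IL{\bp_2}\in\DJp$, contradicting Theorem~\ref{thm:inc}. Second, the pairing condition requires the transfer property the paper takes from \cite[Lemma~6.7]{LNSSS2}: if one shortest directed path between two elements lies in $\QBG_{(1/2)\vpi_i}(W)$, then so does every shortest directed path between them. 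Applied to $\bp_2$ and the concatenation above, this gives $\pair{\vpi_i}{\beta^\vee}\in\{0,2\}$ for every label $\beta$ of $\bq_2$, and membership in $\DJp$ excludes $0$. Lemma~\ref{lem:WJs}, which you invoke, only controls the segment from $u$ to $v$ and cannot replace this transfer property. As written, the proof is therefore incomplete.
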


\begin{proof}
Let $\bd: w \edge{\xi_1,\dots,\xi_a,\zeta_1,\dots,\zeta_b} u$ 
be the (unique) label-increasing directed path 
from $w$ to $u$ (see Theorem~\ref{thm:inc}), 
where $\xi_{1},\dots,\xi_{a} \in \DJs$ and $\zeta_1,\dots,\zeta_b \in \DJp$. 
Suppose, for a contradiction, that $a \ge 1$. 
Note that $\bd$ is a shortest directed path from $w$ to $u$. 
Since $u \tb{w} v$, it follows that if we define $\bd'$ to be
the concatenation of $\bd$ and a shortest directed path from $u$ to $v$, 
then $\bd'$ is a shortest directed path from $w$ to $v$; 
note that the initial label $\IL{\bd'}$ of $\bd'$ is equal to $\xi_{1} \in \DJs$. 
Here we recall that $\bp_{2}$ is the (unique) label-increasing directed path from $w$ to $v$, 
and $\IL{\bp_{2}} \in \DJp$. By \eqref{eq:ro}, we get $\IL{\bd'} \lhd \IL{\bp_{2}}$, 
which contradicts the fact that $\bp_{2}$ is lexicographically-minimal among the 
shortest directed paths from $w$ to $v$. Thus we get $a = 0$.
Now, because $\bp_{2}$ lies in $\QBG_{(1/2)\vpi_{i}}(W)$, 
it follows from \cite[Lemma 6.7]{LNSSS2} that the concatenation $\bd'$ above 
also lies in $\QBG_{(1/2)\vpi_{i}}(W)$, and hence so does $\bd$. 
Also, we have
\begin{align*}
& \qwt(\bd) + \qwt(\ed(\bd) \Rightarrow x) = 
\qwt(\bp_{2}) - \qwt(u \Rightarrow v) + \qwt(u \Rightarrow x) \\
& \le \qwt(\bp_{2}) - \qwt(u \Rightarrow v) + 
\qwt(u \Rightarrow v) + \qwt(v \Rightarrow x) \quad \text{by Proposition~\ref{prop:qwt}} \\
& = \qwt(\bp_{2}) + \qwt(\ed(\bp_{2}) \Rightarrow x) \le d. 
\end{align*}
Moreover, since both $u=\ed(\bd)$ and $v$ are contained in $x\WJs$, 
it follows from Lemma~\ref{lem:WJs} that $\qwt(u \Rightarrow v) \in Q^{\vee,+}_{\J}$. 
Hence, 
\begin{equation*}
\pair{\vpi_{i}}{\qwt(\bd)} = 
\pair{\vpi_{i}}{\qwt(\bp_{2}) - \qwt(u \Rightarrow v)} = 
\pair{\vpi_{i}}{\qwt(\bp_{2})} = d_{i}. 
\end{equation*}
Therefore we conclude that $\bq:=(\bd,\bt_{u}) \in \bR{w,x,d}$. 
This proves the proposition. 
\end{proof}
%
%
\begin{cor} \label{cor:twp}
Keep the notation and setting above. 
If $\bR{w,x,d} \ne \emptyset$, then $\qwt(w \Rightarrow x) \le d$. 
Therefore, $\langle \CO^{w},\CO_{x} \rangle_{d} = 1$; see Proposition~\ref{prop:twp}. 
\end{cor}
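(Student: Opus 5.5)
Since $\bR{w,x,d} \ne \emptyset$, Proposition~\ref{prop:Rtbmin} gives an element $\bp=(\bp_{2},\bp_{1}) \in \bR{w,x,d}$; we do not even need the minimal one. By the definition of $\bR{w,x,d}$ we have $\ell(\bp_{1})=0$, so $\ed(\bp)=\ed(\bp_{2})=:v$ and $\qwt(\bp)=\qwt(\bp_{2})$. Moreover, the defining condition of $\bQLS{w,x,d}$ gives
\begin{equation*}
\qwt(\bp_{2}) + \qwt(v \Rightarrow x) \le d .
\end{equation*}

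The plan is to concatenate $\bp_{2}$, a directed path from $w$ to $v$ in $\QBG(W)$, with a shortest directed path from $v$ to $x$. This produces a directed path $\bq$ from $w$ to $x$ whose quantum weight is additive over the two pieces:
\begin{equation*}
\qwt(\bq)=\qwt(\bp_{2})+\qwt(v \Rightarrow x).
\end{equation*}
Proposition~\ref{prop:qwt} then yields
\begin{equation*}
\qwt(w \Rightarrow x) \le \qwt(\bq) \le d .
\end{equation*}
The second assertion follows immediately from Proposition~\ref{prop:twp}. There is no real obstacle. The only point to check is that the concatenation is a legitimate directed path in $\QBG(W)$. This is clear: $\bp_{2}$ lies in $\QBG(W)$, even though it is additionally constrained to $\QBG_{(1/2)\vpi_{i}}(W)$, and its endpoint is the starting point of the second path.
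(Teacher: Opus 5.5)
Your proof is correct, but it takes a different route from the paper's.

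The paper uses the specific element $\bp_{\min}$ from Proposition~\ref{prop:Rtbmin}, whose endpoint is $\tbmin{x}{\J}{w}$. Because this endpoint lies $\tb{w}$-below $x$, concatenating $\bp_{\min}$ with a shortest path to $x$ gives a \emph{shortest} path from $w$ to $x$. This yields the exact identity $\qwt(w \Rightarrow x) = \qwt(\bp_{\min}) + \qwt(\ed(\bp_{\min}) \Rightarrow x)$, and the defining inequality of $\bR{w,x,d}$ then finishes the proof.

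You instead take an arbitrary element and replace shortestness by Proposition~\ref{prop:qwt}, which bounds $\qwt(w \Rightarrow x)$ by the quantum weight of any directed path from $w$ to $x$. The concatenation is a legitimate path in $\QBG(W)$, and its quantum weight is additive, so the argument is complete.

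Your route is more elementary. It needs neither the existence of tilted-Bruhat minima (Proposition~\ref{prop:tbmin}) nor Proposition~\ref{prop:Rtbmin}, whose own proof already relies on Proposition~\ref{prop:qwt}. For the same reason, you should not cite Proposition~\ref{prop:Rtbmin} to produce an element: the hypothesis $\bR{w,x,d} \ne \emptyset$ supplies one directly.

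The paper's route gives slightly more information. For $\bp_{\min}$, the quantity $\qwt(\bp)+\qwt(\ed(\bp)\Rightarrow x)$ equals $\qwt(w \Rightarrow x)$ exactly. That extra information is not needed for this corollary.
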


\begin{proof}
Let $\bp_{\min} \in \bR{w,x,d}$ be as in Proposition~\ref{prop:Rtbmin}. 
Since $\ed(\bp_{\min}) = \tbmin{x}{\J}{w}$, and $x \in x\WJs$, 
it follows that $\ed(\bp_{\min}) \tb{w} x$, 
which implies that the concatenation of a shortest directed path from 
$w$ to $\ed(\bp_{\min})$ and a shortest directed path from 
$\ed(\bp_{\min})$ to $x$ is a shortest directed path from $w$ to $x$.
Therefore, 
\begin{equation*}
\qwt(w \Rightarrow x) = 
\underbrace{ \qwt( w \Rightarrow \ed(\bp_{\min}) ) }_{ = \qwt (\bp_{\min}) }  + 
\qwt( \ed(\bp_{\min}) \Rightarrow x ) \le d; 
\end{equation*}
the inequality above follows from the fact that $\bp_{\min} \in \bR{w,x,d}$. 
Thus we have proved the corollary. 
\end{proof}
Combining Proposition~\ref{prop:Rtbmin} and Corollary~\ref{cor:twp}, 
we obtain the following theorem.
%
%
\begin{thm} \label{thm:main1}
Assume that $\vpi_{i}$ is quasi-minuscule. 
Let $w,x \in W$, and $d = \sum_{j \in I} d_{j}\alpha_{j}^{\vee} \in Q^{\vee,+}$ 
(possibly, $d_{i}=0$). If $\# \bR{w,x,d} = 1$, then $\bR{w,x,d} = \bigl\{ \bp_{\min} \bigr\}$, 
where $\bp_{\min}$ is as in Proposition~\ref{prop:Rtbmin}. 
In this case, setting $x_{\min}:=\tbmin{x}{\J}{w}$, we have
\begin{equation} \label{eq:main1a}
\langle \CO^{s_i}, \CO^{w}, \CO_{x} \rangle_{d} =  
\begin{cases}
1 - (-1)^{\ell(w) - \ell(x_{\min})} \be^{-\vpi_{i}+\wt(\eta)}
  & \text{\rm if $\# \bR{w,x,d} = 1$}, \\[1mm]
1 & \text{\rm if $\bR{w,x,d} = \emptyset$ and $\qwt(w \Rightarrow x) \le d$}, \\[1mm]
0 & \text{\rm if $\bR{w,x,d} = \emptyset$ and $\qwt(w \Rightarrow x) \not\le d$}; 
\end{cases}
\end{equation}
recall that $\wt(\eta) = \frac{1}{2}w\vpi_{i}+\frac{1}{2}x\vpi_{i}$. 
\end{thm}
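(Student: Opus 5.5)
The proof is a direct assembly of Theorem~\ref{thm:thp}, in the form \eqref{eq:thp-qm}, with Proposition~\ref{prop:Rtbmin}, Corollary~\ref{cor:twp} and Proposition~\ref{prop:twp}. The only non-formal input is a length computation for the unique element of $\bR{w,x,d}$.

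First, suppose $\# \bR{w,x,d} = 1$. Then $\bR{w,x,d} \ne \emptyset$, so Proposition~\ref{prop:Rtbmin} supplies an element $\bp_{\min} \in \bR{w,x,d}$ with $\ed(\bp_{\min}) = x_{\min}$. Since the set has only one element, $\bR{w,x,d} = \{ \bp_{\min} \}$, which is the first assertion. Corollary~\ref{cor:twp} gives $\langle \CO^{w}, \CO_{x} \rangle_{d} = 1$. Substituting into \eqref{eq:thp-qm} yields
\begin{equation*}
\langle \CO^{s_i}, \CO^{w}, \CO_{x} \rangle_{d} = 1 - (-1)^{\ell(\bp_{\min})} \be^{-\vpi_{i}+\wt(\eta)}.
\end{equation*}
The formula for $\wt(\eta)$ is immediate from \eqref{eq:wt} with $N=2$, since $\eta = (\mcr{x}^{\J},\mcr{w}^{\J})$ with $a_1 = 1/2$ and $\vpi_i$ is $\WJs$-invariant. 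It therefore remains to show
\begin{equation*}
(-1)^{\ell(\bp_{\min})} = (-1)^{\ell(w)-\ell(x_{\min})}.
\end{equation*}

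For this parity, note that $\ell(\bp_{\min}) = \ell(\bp_2)$ because $\ell(\bp_1) = 0$. The path $\bp_2$ is a directed path in $\QBG(W)$ from $w$ to $x_{\min}$. Each edge $u \edge{\alpha} us_{\alpha}$ changes the length by either $+1$ (Bruhat edge) or $1 - 2\pair{\rho}{\alpha^{\vee}}$ (quantum edge), and both are odd. Equivalently, $\det(us_\alpha) = -\det(u)$ for every edge. Hence the number of edges in any directed path from $w$ to $x_{\min}$ is congruent to $\ell(x_{\min}) - \ell(w)$ mod $2$, which gives the required sign.

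Second, suppose $\bR{w,x,d} = \emptyset$. Then the sum in \eqref{eq:thp-qm} vanishes, and $\langle \CO^{s_i}, \CO^{w}, \CO_{x} \rangle_{d} = \langle \CO^{w}, \CO_{x} \rangle_{d}$. By Proposition~\ref{prop:twp} this equals $1$ or $0$ according as $\qwt(w \Rightarrow x) \le d$ or not, which gives the remaining two cases.

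No step is a real obstacle. The only point needing care is the parity argument; every other step is a direct citation of the earlier results.
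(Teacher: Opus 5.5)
Your proposal is correct and follows the paper's route: the paper obtains Theorem~\ref{thm:main1} by combining Proposition~\ref{prop:Rtbmin} and Corollary~\ref{cor:twp} with \eqref{eq:thp-qm} and Proposition~\ref{prop:twp}, exactly as you do. Your parity argument (each edge of $\QBG(W)$ changes the length by an odd amount, so $\ell(\bp_{\min}) \equiv \ell(w)-\ell(x_{\min}) \pmod 2$) spells out the one step the paper leaves implicit.
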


Let us consider the case where $\# \bR{w,x,d} \ge 2$. 
Here we make the following conjecture. 
%
%
\begin{conj} \label{conj:R1even}
Assume that $\vpi_{i}$ is quasi-minuscule. 
We have $\# \bR{w,x,d} \in \{ 1 \} \sqcup 2\BZ_{\ge 0}$ 
for all $w,x \in W$ and $d \in Q^{\vee,+}$. 
\end{conj}

\begin{rem} \label{rem:conj}
As seen in Section~\ref{subsec:mc}, 
if $\vpi_{i}$ is minuscule, then $\# \bR{w,x,d} \in \{0,1\}$ 
for all $w,x \in W$ and $d \in Q^{\vee,+}$. 
Also, by computer, we have checked the following: 
\begin{enu}
\item If $\Fg$ is of type $B_{n}$ with $2 \le n \le 5$, then 
Conjecture~\ref{conj:R1even} is true for all $i \in I$ 
(recall that $\vpi_{n}$ is minuscule); 

\item If $\Fg$ is of type $C_{n}$ with $2 \le n \le 5$, then 
Conjecture~\ref{conj:R1even} is true for all $i \in I$ 
(recall that $\vpi_{1}$ is minuscule); 

\item If $\Fg$ is of type $D_{n}$ with $n = 4,5$, then 
Conjecture~\ref{conj:R1even} is true for all $i \in I$ 
(recall that $\vpi_{1}, \vpi_{n-1}, \vpi_{n}$ are minuscule); 

\item If $\Fg$ is of type $E_{6}$, then 
Conjecture~\ref{conj:R1even} is true for all $i = 1,2,3,5,6$ 
(recall that $\vpi_{1},\vpi_{6}$ are minuscule); 

\item If $\Fg$ is of type $F_{4}$, then 
Conjecture~\ref{conj:R1even} is true for $i=1,4$;

\item If $\Fg$ is of type $G_{2}$, then
Conjecture~\ref{conj:R1even} is true for $i=1$.
\end{enu}
\end{rem}
%
%
\begin{thm} \label{thm:main3}
Assume that $\vpi_{i}$ is quasi-minuscule, and that 
Conjecture~\ref{conj:R1even} is true for $\vpi_{i}$. 
Let $w,x \in W$, and 
$d = \sum_{j \in I} d_{j}\alpha_{j}^{\vee} \in Q^{\vee,+}$ 
(possibly, $d_{i}=0$). If $\# \bR{w,x,d} \ge 2$, then 
%
%
\begin{equation} \label{eq:main3a}
\sum_{ \bp \in \bR{w,x,d} }
 (-1)^{\ell(\bp)} = 0, 
\end{equation}
and hence by Corollary~\ref{cor:twp}, 
\begin{equation}
\langle \CO^{s_i}, \CO^{w}, \CO_{x} \rangle_{d} =  1. 
\end{equation}
\end{thm}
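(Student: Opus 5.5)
The plan is to reduce \eqref{eq:main3a} to a statement about the set of endpoints and then use Conjecture~\ref{conj:R1even} on suitably truncated data. The main difficulty is that the conjecture controls only the \emph{cardinality} of $\bR{w,x,d}$, while \eqref{eq:main3a} needs a balance of signs; I am not certain the truncation step below produces that balance.

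\textbf{Signs via endpoints.} For $\bp=(\bp_{2},\bp_{1})\in\bR{w,x,d}$ we have $\ell(\bp)=\ell(\bp_{2})$. Every edge $u\edge{\alpha}us_{\alpha}$ of $\QBG(W)$ changes $\ell$ by an odd integer: $1$ for a Bruhat edge and $1-2\pair{\rho}{\alpha^{\vee}}$ for a quantum edge. Hence
\begin{equation*}
(-1)^{\ell(\bp)}=(-1)^{\ell(w)-\ell(\ed(\bp))}.
\end{equation*}
By the Remark after \eqref{eq:thp-qm}, $\bp\mapsto\ed(\bp)$ is injective. So, writing $E:=\{\ed(\bp)\mid \bp\in\bR{w,x,d}\}\subset x\WJs$, the left-hand side of \eqref{eq:main3a} equals $(-1)^{\ell(w)}\sum_{v\in E}(-1)^{\ell(v)}$. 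By Proposition~\ref{prop:Rtbmin}, $E$ is a lower set of $(x\WJs,\tb{w})$ with unique minimum $x_{\min}=\tbmin{x}{\J}{w}$.

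\textbf{Truncations.} For each $v\in E$ I will show that the endpoint set of a suitable truncated datum is exactly the interval $E_{v}:=\{u\in x\WJs\mid x_{\min}\tb{w}u\tb{w}v\}$. The candidate datum is $\bR{w,v,d_{v}}$ with $d_{v}:=\qwt(w\Rightarrow v)$, which lies in $Q^{\vee,+}$. I expect $\pair{\vpi_{i}}{d_{v}}=\pair{\vpi_{i}}{\qwt(\bp_{2})}$, using Lemma~\ref{lem:WJs}.
\begin{itemize}
\item[$\subseteq$:] For $\bq\in\bR{w,v,d_{v}}$, Proposition~\ref{prop:qwt} and the inequality $\qwt(\ed(\bq)\Rightarrow v)\le d_{v}-\qwt(\bq)$ force
\begin{equation*}
\ell(w\Rightarrow\ed(\bq))+\ell(\ed(\bq)\Rightarrow v)=\ell(w\Rightarrow v),
\end{equation*}
so $\ed(\bq)\tb{w}v$. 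Comparing $\qwt$ along geodesics should justify this.
\item[$\supseteq$:] The computation in the proof of Proposition~\ref{prop:Rtbmin} shows that every $u\in E_{v}$ arises as an endpoint.
\end{itemize}
Applying Conjecture~\ref{conj:R1even} to $(w,v,d_{v})$ then gives $\#E_{v}\in\{1\}\sqcup 2\BZ_{\ge0}$ for every $v\in E$.

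\textbf{Signs on the intervals.} Next I will show by induction along $\tb{w}$ that $S(v):=\sum_{u\in E_{v}}(-1)^{\ell(u)}$ vanishes whenever $v\neq x_{\min}$. The tilted order restricted to a coset is graded by $\ell(x_{\min}\Rightarrow\cdot)$, and $\ell(u)\equiv\ell(x_{\min})+\ell(x_{\min}\Rightarrow u)\pmod 2$. So $S(v)$ is a signed rank count of the interval $[x_{\min},v]$, and its vanishing amounts to this interval being Eulerian. I would first try to prove this directly. Via Lemma~\ref{lem:WJs} and the proof of Claim~\ref{c:WJs}, the restriction of $\tb{w}$ to $x\WJs$ should be identified with a tilted Bruhat order on $\WJs$ itself, and tilted Bruhat intervals are known to be Eulerian. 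If this holds, the conjecture serves only to guarantee the parity bookkeeping for non-interval $E$. Failing the direct route, I would use the parity information $\#E_{v}\in\{1\}\sqcup2\BZ_{\ge0}$ together with induction on rank to force $S(v)=0$. This step is the main obstacle, since parity of cardinality alone does not obviously control the signs.

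\textbf{Assembling $E$.} With $S(v)=0$ for $v\neq x_{\min}$, Möbius inversion on the lower set $E$ expresses $\sum_{u\in E}(-1)^{\ell(u)}$ through the $S(v)$ and Möbius values over maximal elements. Applying Conjecture~\ref{conj:R1even} to $(w,x,d)$ itself, which gives $\#E$ even, should then yield $0$ once $\#E\ge2$. Finally, Corollary~\ref{cor:twp} gives $\langle\CO^{w},\CO_{x}\rangle_{d}=1$, and \eqref{eq:thp-qm} gives the value $1$.
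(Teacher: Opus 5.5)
Your preliminary reductions are correct. Since every edge of $\QBG(W)$ changes $\ell$ by an odd integer, $(-1)^{\ell(\bp)}=(-1)^{\ell(w)-\ell(\ed(\bp))}$. The endpoint set $E$ is a lower set of $(x\WJs,\tb{w})$ with minimum $x_{\min}$. For $v\in E$, the endpoint set of $\bR{w,v,d_v}$ with $d_v=\qwt(w\Rightarrow v)$ is exactly $[x_{\min},v]$. For the inclusion $\subseteq$, use that every directed path $\bq$ starting at $w$ satisfies $\ell(\bq)=\ell(\ed(\bq))-\ell(w)+2\pair{\rho}{\qwt(\bq)}$, so equality of quantum weights forces the concatenation to be shortest.

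The proposal breaks exactly where cancellation has to be produced. The decisive gap is the final assembly. Even granting $S(v)=0$ for all $v\in E\setminus\{x_{\min}\}$, the evenness of $\#E$ only tells you that $\sum_{u\in E}(-1)^{\ell(u)}$ is even, since it is congruent to $\#E$ modulo $2$; it does not tell you the sum vanishes. Concretely, take a lower set consisting of $x_{\min}$ and three elements covering it. It satisfies every constraint you derive: each $[x_{\min},v]\subseteq E$ has cardinality $1$ or $2$ and zero signed sum for $v\neq x_{\min}$, and $\#E=4$ is even. Yet its signed sum is $\pm 2$. You would need an additional structural input, for instance that $E$ always has a unique $\tb{w}$-maximal element, so that the sum becomes $\pm S(v_{\max})$. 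Nothing in the proposal supplies this.

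The preceding step is also unsupported. The claim that tilted Bruhat intervals are Eulerian is asserted without proof or a usable reference. Your parity fallback cannot replace it: a rank-two interval with four middle elements has even cardinality $6$ but signed sum $-2$.

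The missing idea is to get cancellation from explicit bijections rather than from poset topology. The paper inducts on the length $m$ of a sequence $j_1,\dots,j_m\in I_{\af}$ from Lemma~\ref{lem:qL}. Using Lemma~\ref{lem:DL}, it builds bijections $\Psi\colon\bR{w,x,d}\to\bR{s_jw,x,d'}$, which lower $\ell$ by one (case $x^{-1}\alpha_j\in\DJm$), and $\Phi\colon\bR{w,x,d}\to\bR{s_jw,s_jx,d''}$, which preserve $\ell$ (case $x^{-1}\alpha_j\in\DJp$). When $x^{-1}\alpha_j\in\Delta_{\J}$, it splits each set according to the sign of $(\ed(\bp))^{-1}\alpha_j$. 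Conjecture~\ref{conj:R1even} is used only in Subcase~3.2 with $\#\bR{s_jw,x,d'}=1$. There it is applied to the triples $(s_jw,s_jx,d'')$ and $(w,x,d)$, not to intervals, to force $\bR{s_jw,s_jx,d''}=\bR{s_jw,x,d'}$, after which \eqref{eq:c32a} gives zero. The cardinality information only identifies which sets coincide; the signs are handled by $\Phi$, $\Psi$, and the induction hypothesis.
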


\begin{proof}
By Lemma~\ref{lem:qL} 
(applied to the case where $\J=I \setminus \{i\}$), there exists a sequence 
$j_{1},j_{2},\dots,j_{m}$ of elements in $I_{\af}=I \sqcup \{0\}$ such that 
\begin{equation} \label{eq:qL2}
(s_{j_{k-1}} \cdots s_{j_{1}}w)^{-1}\alpha_{j_{k}} \in \DJp
\qquad \text{\rm and} \qquad
s_{j_{m}} \cdots s_{j_{1}}w \in \WJs. 
\end{equation}
We show the claim by induction on $m$. 

Assume that $m = 0$; in this case, $w \in \WJs$. 
If $\bp=(\bp_{2},\bp_{1}) \in \bR{w,x,d}$, then 
$\eta = \eta_{\bp} = (\mcr{x},\mcr{w}) = (\mcr{x},e) \in \QLS(\vpi_{i})$. 
It follows from \cite[Lemma 3.3]{MNS1} that $\mcr{x}=e$. 
Write $\bp_{2}$ as $\WJs \ni w \edge{\beta_1,\dots,\beta_r} v: = \ed(\bp_2) \in x\WJs = \WJs$ 
with $\beta_1,\dots,\beta_{r} \in \DJp$ such that $\beta_1 \lhd \cdots \lhd \beta_{r}$. 
Since $w,v \in \WJs$, it follows from Lemma~\ref{lem:WJs} that $r = 0$, 
which implies that $w=v$. We conclude that 
\begin{equation}
\bR{w,x,d} = \begin{cases}
 \bigl\{ (\bt_{w},\bt_{w}) \bigr\} 
 & \text{if $x \in \WJs$, $d_{i} = 0$, $\qwt(w \Rightarrow x) \le d$}, \\
 \emptyset & \text{otherwise}, 
 \end{cases}
\end{equation}
which proves the claim in the case where $m=0$. 

Assume that $m > 0$, and $\# \bR{w,x,d} \ge 2$. For simplicity of notation, 
we set $j_{1}:=j$; recall that $w^{-1}\alpha_{j} \in \DJp$. 

\medskip

\paragraph{\bf Case 1.}
%
Assume that $x^{-1}\alpha_{j} \in \DJm$; 
in this case, we see that $y^{-1}\alpha_{j} \in \DJm$ for all $y \in x\WJs$. 
Let $\bp=(\bp_{2},\bp_{1}) \in \bR{w,x,d}$
with $v:=\ed(\bp_{2})=\ed(\bp) \in x\WJs$; recall that $\bp_{1} = \bt_{v}$. 
Write $\bp_{2}$ as 
%
%
\begin{equation} \label{eq:bp2}
\bp_{2} : w \edge{\beta_1,\dots,\beta_r} v
\end{equation}
with $\beta_1,\dots,\beta_{r} \in \DJp$ such that $\beta_1 \lhd \cdots \lhd \beta_{r}$ 
and $\pair{\vpi_{i}}{\beta_{t}^{\vee}}=2$ for all $1 \le t \le r$.
Because $w^{-1}\alpha_{j} \in \DJp$ and $v^{-1}\alpha_{j} \in \DJm$, 
it follows from Lemma~\ref{lem:DL}\,(1) that 
there exists a shortest directed path $\bq_{2}$ 
from $s_{j}w$ to $v$ of the form 
$\bq_{2} : s_{j}w \edge{\beta_1,\dots,\beta_{t-1},\beta_{t+1},\dots,\beta_r} v$ 
for some $1 \le t \le r$, where $\ell(\bq_{2})=\ell(\bp_{2})-1$ and 
$\qwt(\bq_{2}) = \qwt(\bp_{2}) - \delta_{j0} w^{-1}\alpha_{j}^{\vee}$.
Then we deduce that $\Psi(\bp):=(\bq_{2},\bt_{v}) \in \bR{s_{j}w,x,d'}$, where 
%
%
\begin{equation} \label{eq:d'}
d':=d-\delta_{j0} w^{-1}\alpha_{j}^{\vee};
\end{equation} 
note that $\ell(\Psi(\bp)) = \ell(\bp)-1$. We claim that 
\begin{equation} \label{eq:bija}
\text{ the map $\Psi : \bR{w,x,d} \rightarrow 
\bR{s_{j}w,x,d'}$, $\bp \mapsto \Psi(\bp)$, is bijective. }
\end{equation}
Let $\bq':=(\bq_{2}',\bq_{1}') \in \bR{s_{j}w,x,d'}$, 
with $u := \ed(\bq') = \ed(\bq_{2}') \in x\WJs$. 
Recall that $\bq_{1}' = \bt_{u}$. 
Write $\bq_{2}'$ as 
\begin{equation} \label{eq:bq2'}
\bq_{2}' : s_{j} w \edge{\gamma_1,\dots,\gamma_p} u
\end{equation}
with $\gamma_1,\dots,\gamma_{p} \in \DJp$ such that $\gamma_1 \lhd \cdots \lhd \gamma_{p}$ 
and $\pair{\vpi_{i}}{\gamma_{t}^{\vee}}=2$ for all $1 \le t \le p$. 
We have an edge $w \edge{w^{-1}\alpha_{j}} s_{j}w$ by Lemma~\ref{lem:edge}. 
By Lemma~\ref{lem:DL}\,(1), we deduce that the concatenation 
\begin{equation*}
w \edge{w^{-1}\alpha_{j}} \underbrace{ s_{j} w \edge{\gamma_1,\dots,\gamma_p} u }_{=\bq_{2}'}
\end{equation*}
is a shortest directed path from $w$ to $u$; 
in particular, $\ell(w \Rightarrow u) = \ell(\bq_{2}') + 1 = p + 1$ and 
$\qwt(w \Rightarrow u) = \qwt(\bq_{2}')+\delta_{j0}w^{-1}\alpha_{j}^{\vee}$. 
Let $\bp_{2}' : w \edge{\xi_1,\dots,\xi_a,\zeta_1,\dots,\zeta_b} u$ be the (unique)
label-increasing directed path from $w$ to $u$ with respect to the reflection order $\lhd$, 
where $\xi_{1},\dots,\xi_{a} \in \DJs$ and $\zeta_1,\dots,\zeta_b \in \DJp$; 
remark that $a+b = \ell(w \Rightarrow u) = p+1$ and 
$\qwt(\bp_{2}') = \qwt(w \Rightarrow u) = 
 \qwt(\bq_{2}')+\delta_{j0}w^{-1}\alpha_{j}^{\vee}$. We claim that 
\begin{equation} \label{eq:a=0}
a = 0, \qquad \text{and hence} \qquad 
\bp_{2}' : w \edge{\zeta_1,\dots,\zeta_b} u \quad \text{with $b=p+1$}. 
\end{equation}
Indeed, since $w^{-1}\alpha_{j} \in \DJp$ and $u^{-1}\alpha_{j} \in \DJm$, 
it follows from Lemma~\ref{lem:DL}\,(1) that 
there exists a directed path from $s_{j}w$ to $u$ of the form either 
\begin{equation*}
\bd_{2} : s_{j}w \edge{\xi_1,\dots,\xi_{c-1},\xi_{c+1},\dots,\xi_a,\zeta_1,\dots,\zeta_b} u
\quad \text{or} \quad 
\bd_{2}' : s_{j}w \edge{\xi_1,\dots,\xi_a,\zeta_1,\dots,\zeta_{c-1},\zeta_{c+1},\zeta_b} u.
\end{equation*}
By the uniqueness of label-increasing directed paths, 
this directed path is identical to $\bq_{2}' : s_{j} w \edge{\gamma_1,\dots,\gamma_p} u$. 
If $\bq_{2}' = \bd_{2}'$, then we get $a = 0$, as desired. 
If $\bq_{2}' = \bd_{2}$, then we get $a \le 1$. Suppose, for a contradiction, that $a = 1$. 
Then we have $\bp_{2}': w \edge{\xi_1,\zeta_1,\dots,\zeta_b} u$, with $w^{-1}\alpha_{j} = \xi_{1}$. 
However, since $w^{-1}\alpha_{j} \in \DJp$ and $\xi_{1} \in \DJs$, this is a contradiction. 
Thus we get $a = 0$, as desired. We claim that 
\begin{equation} \label{eq:pair=2}
\pair{\vpi_{i}}{\zeta_{c}^{\vee}} = 2 \quad 
 \text{for all $1 \le c \le b$}. 
\end{equation}
Recall from \eqref{eq:bp2} the directed path 
$\bp_{2} : w \edge{\beta_1,\dots,\beta_r} v$. 
Since both $v$ and $u$ are contained in $x \WJs$, 
it follows from Lemma~\ref{lem:WJs} that there exists a directed path 
$v \edge{\phi_1,\dots,\phi_h} u$ from $v$ to $u$ with $\phi_1,\dots,\phi_h \in \DJs$. 
Then the concatenation 
\begin{equation*}
w \edge{\beta_1,\dots,\beta_r} v \edge{\phi_1,\dots,\phi_h} u
\end{equation*}
is a directed path from $w$ to $u$ lying in $\QBG_{(1/2)\vpi_{i}}(W)$. 
Hence it follows from \cite[Lemma 6.7]{LNSSS2} that 
$\bp_{2}': w \edge{\zeta_1,\dots,\zeta_b} u$ also lies in $\QBG_{(1/2)\vpi_{i}}(W)$. 
Since $\pair{\vpi_{i}}{\zeta_{c}^{\vee}} > 0$ and 
$\pair{\vpi_{i}}{\beta^{\vee}} \in \{0,1,2\}$ for all $\beta \in \Delta^{+}$, 
we obtain $\pair{\vpi_{i}}{\zeta_{c}^{\vee}} = 2$ for all $1 \le c \le b$, as desired. 
Therefore, $\Psi'(\bq'):=(\bp_{2}',\bt_{u}) \in \bR{w,x,d'+\delta_{j0}w^{-1}\alpha_{j}^{\vee}} = 
\bR{w,x,d}$; note that $\ell(\Psi'(\bq'))=\ell(\bq')+1$. 
By the uniqueness of label-increasing directed paths, 
we deduce that the map 
$\Psi':\bR{s_{j}w,x,d'} \rightarrow \bR{w,x,d}$, $\bq' \mapsto \Psi'(\bq')$, 
is the inverse map of $\Psi$. Thus we have shown \eqref{eq:bija}. 
By \eqref{eq:bija}, 
$\# \bR{w,x,d} \ge 2$ if and only if $\# \bR{s_{j}w,x,d'} \ge 2$; 
in this case, we compute
\begin{equation*}
\sum_{ \bp \in \bR{w,x,d} }
 (-1)^{\ell(\bp)} =
\sum_{ \bq \in \bR{s_{j}w,x,d'} }
(-1)^{\ell(\Psi^{-1}(\bq))} = 
- \sum_{ \bq \in \bR{s_{j}w,x,d'} }
 (-1)^{\ell(\bq)} \stackrel{ \text{(IH)} }{=} 0.
\end{equation*}

\medskip

\paragraph{\bf Case 2.}
%
Assume that $x^{-1}\alpha_{j} \in \DJp$; 
in this case, we see that $y^{-1}\alpha_{j} \in \DJp$ for all $y \in x\WJs$. 
Let $\bp=(\bp_{2},\bp_{1}) \in \bR{w,x,d}$
with $v:=\ed(\bp_{2})=\ed(\bp) \in x\WJs$; recall that $\bp_{1} = \bt_{v}$. 
Write $\bp_{2}$ as
\begin{equation} \label{eq:bp2a}
\bp_{2} : w = u_{0} \edge{\beta_1} u_1 \edge{\beta_2} \cdots \edge{\beta_r} u_r = v, 
\end{equation}
with $\beta_1,\dots,\beta_{r} \in \DJp$ such that $\beta_1 \lhd \cdots \lhd \beta_{r}$ 
and $\pair{\vpi_{i}}{\beta_{t}^{\vee}}=2$ for all $1 \le t \le r$.
Because $w^{-1}\alpha_{j} \in \DJp$ and $v^{-1}\alpha_{j} \in \DJp$, 
it follows from Lemma~\ref{lem:DL}\,(3) that 
there exists a (shortest) directed path $\bd_{2}$ 
from $s_{j}w$ to $s_{j}v$ such that $\ell(\bd_{2})=\ell(\bp_{2})$ and 
$\qwt(\bd_{2}) = \qwt(\bp_{2}) - \delta_{j0} w^{-1}\alpha_{j}^{\vee} + \delta_{j0} v^{-1}\alpha_{j}^{\vee}$.

Now, let 
%
%
\begin{equation} \label{eq:bq2b}
\bq_{2} : s_{j}w = v_{0} \edge{\xi_1} \cdots \edge{\xi_a} v_{a} 
\edge{\zeta_1} \cdots \edge{\zeta_b} v_{a+b} = s_{j}v.
\end{equation}
be the (unique) label-increasing directed path from $s_{j}w$ to $s_{j}v$ 
with respect to the reflection order $\lhd$, 
where $\xi_{1},\dots,\xi_{a} \in \DJs$ and $\zeta_1,\dots,\zeta_b \in \DJp$; 
note that $a+b=\ell(s_{j}w \Rightarrow s_{j}v) = \ell(w \Rightarrow v) = r$. 
We claim that 
\begin{equation} \label{eq:a0-2}
a = 0 \quad \text{and} \quad 
\pair{\vpi_{i}}{\zeta_{c}^{\vee}}=2 
\text{ for all $1 \le c \le b$}. 
\end{equation}
Assume first that $v_{t}^{-1}\alpha_{j} \in \Delta^{-}$ for all $0 \le t \le a+b = r$. 
By Lemma~\ref{lem:DL}\,(3), 
we have a shortest directed path from $w$ to $v$ of the form
\begin{equation} \label{eq:bp2b}
\bp_{2}' : w = s_{j}v_{0} \edge{\xi_1} \cdots \edge{\xi_a} s_{j}v_{a} 
\edge{\zeta_1} \cdots \edge{\zeta_b} s_{j}v_{a+b} = v. 
\end{equation}
Recall from \eqref{eq:bp2a} the label-increasing directed path $\bp_{2}$. 
By the uniqueness of label-increasing directed paths, 
it follows that $\bp_{2} = \bp_{2}'$, which implies \eqref{eq:a0-2}; 
in particular, $u_{t}^{-1}\alpha_{j} \in \Delta^{+}$ for all $0 \le t \le r = a+b$. 
Similarly, we can deduce that if 
$u_{t}^{-1}\alpha_{j} \in \Delta^{+}$ for all $0 \le t \le r = a+b$, then 
$v_{t}^{-1}\alpha_{j} \in \Delta^{-}$ for all $0 \le t \le a+b = r$. 
Assume next that $v_{t}^{-1}\alpha_{j} \in \Delta^{+}$ for some $0 \le t \le a+b = r$, 
or equivalently, $u_{t}^{-1}\alpha_{j} \in \Delta^{-}$ for some $0 \le t \le r = a+b$; 
notice that $0 < t < a+b = r$. 
Suppose, for a contradiction, that $a \ge 1$. 
By Lemma~\ref{lem:DL}\,(2) (applied to the directed subpath 
$v_{t} \edge{\bullet} \cdots \edge{\bullet} v_{a+b}=s_{j}v$ in $\bq_{2}$), 
there exists a directed path from $s_{j}w$ to $v$ 
whose initial edge is equal to $\xi_{1} \in \DJs$. 
Also, by Lemma~\ref{lem:DL}\,(1) 
(applied to the directed subpath 
$w = u_{0} \edge{\bullet} \cdots \edge{\bullet} u_{t}$ in 
$\bp_{2}$), there exists a directed path 
from $s_{j}w$ to $v$ of the form 
$s_{j}w \edge{\beta_1,\dots,\beta_{t'-1},\beta_{t'+1},\dots,\beta_r} v$ 
for some $1 \le t' \le r-1$. 
By the uniqueness of label-increasing directed paths, 
we get $\xi_{1} \in \DJp$, which is a contradiction. 
Thus we get $a=0$. The concatenation 
\begin{equation*}
w \edge{w^{-1}\alpha_{j}} s_{j}w \edge{\beta_1,\dots,\beta_{t-1},\beta_{t+1},\dots,\beta_r} v
\end{equation*}
is a shortest directed path from $w$ to $v$. 
Recall that $w^{-1}\alpha_{j} \in \DJp$, and hence $\pair{\vpi_{i}}{w^{-1}\alpha_{j}^{\vee}} > 0$. 
Since $\bp_{2}$ lies in $\QBG_{(1/2)\vpi_{i}}(W)$, 
it follows from \cite[Lemma 6.7]{LNSSS2} that the directed path above also lies in 
$\QBG_{(1/2)\vpi_{i}}(W)$; in particular, $\pair{\vpi_{i}}{w^{-1}\alpha_{j}^{\vee}} = 2$. 
Since $\wt(\eta) = \frac{1}{2}x\vpi_{i} + \frac{1}{2}w\vpi_{i} \in \Lambda$, 
we see that
\begin{equation*}
\BZ \ni \pair{\wt(\eta)}{\alpha_{j}^{\vee}} =
\frac{1}{2}\pair{x\vpi_{i}}{\alpha_{j}^{\vee}} + 
\frac{1}{2}\pair{w\vpi_{i}}{\alpha_{j}^{\vee}} = 
\frac{1}{2}\pair{v\vpi_{i}}{\alpha_{j}^{\vee}} + 1, 
\end{equation*}
and hence $\pair{\vpi_{i}}{v^{-1}\alpha_{j}^{\vee}} = 2$; 
recall that $v^{-1}\alpha_{j} \in \DJp$. 
We see that the concatenation 
\begin{equation*}
s_{j}w \edge{\beta_1,\dots,\beta_{t-1},\beta_{t+1},\dots,\beta_r} v \edge{v^{-1}\alpha_{j}} s_{j}v
\end{equation*}
is a shortest directed path from $s_{j}w$ to $s_{j}v$ 
lying in $\QBG_{(1/2)\vpi_{i}}(W)$. 
Hence it follows from \cite[Lemma 6.7]{LNSSS2} that 
$\bq_{2} : s_{j}w \edge{\zeta_1,\dots,\zeta_b} s_{j}v$ also lies in $\QBG_{(1/2)\vpi_{i}}(W)$. 
In both cases, we set $\Phi(\bp) := (\bq_{2}, \bt_{s_{j}v})$; we have 
\begin{equation*}
\ell(\Phi(\bp)) = \ell(\bp), \qquad 
\qwt(\Phi(\bp)) = \qwt(\bp)  - \delta_{j0} w^{-1}\alpha_{j}^{\vee} + \delta_{j0} v^{-1}\alpha_{j}^{\vee},
\end{equation*}
\begin{equation*}
\qwt(\underbrace{ \ed(\Phi(\bp)) }_{= s_{j}v} \Rightarrow s_{j}x) =
\qwt(\underbrace{ \ed(\bp) }_{=v} \Rightarrow x) 
 - \delta_{j0} v^{-1}\alpha_{j}^{\vee} + \delta_{j0} x^{-1}\alpha_{j}^{\vee} \quad \text{by Lemma~\ref{lem:DL}\,(3)}.
\end{equation*}
Therefore we conclude that $\Phi(\bp) \in \bR{s_{j}w,s_{j}x,d''}$, where 
\begin{equation} \label{eq:d''}
d'':= d - \delta_{j0} w^{-1}\alpha_{j}^{\vee} + \delta_{j0} x^{-1}\alpha_{j}^{\vee}.
\end{equation}
We claim that 
\begin{equation} \label{eq:bijb}
\text{ the map $\Phi : \bR{w,x,d} \rightarrow \bR{s_{j}w,s_{j}x,d''}$, 
$\bp \mapsto \Phi(\bp)$, is bijective. }
\end{equation}
Let $\bq'=(\bq_2',\bq_1') \in \bR{s_{j}w,s_{j}x,d''}$, 
with $u := \ed(\bq_2') = \ed(\bq') \in s_{j}x \WJs$; recall that $\bq_1' = \bt_{u}$. 
By the same argument, we can show that 
the label-increasing directed path $\bp_{2}'$ from $w$ to $s_{j}u$ is of the form
\begin{equation*}
\bp_{2}' : w \edge{\phi_1,\dots,\phi_r} s_{j}u \in x\WJs, 
\end{equation*}
where $\phi_1,\dots,\phi_{r} \in \DJp$ such that $\phi_1 \lhd \cdots \lhd \phi_{r}$ 
and $\pair{\vpi_{i}}{\phi_{t}^{\vee}}=2$ for all $1 \le t \le r$, 
satisfying the conditions that $\ell(\bp_{2}') = \ell(\bq_{2}')$ and 
$\qwt(\bp_{2}') = \qwt(\bq_{2}') - \delta_{j0} (s_{j}w)^{-1}\alpha_{j}^{\vee} + 
\delta_{j0}u^{-1}\alpha_{j}^{\vee}$. 
Hence we deduce in exactly the same way as above 
that $\Phi'(\bq'):=(\bp_{2}',\bt_{s_{j}u}) \in \bR{w,x,d}$. 
By the uniqueness of label-increasing directed paths, the map 
$\Phi':\bR{s_{j}w,s_{j}x,d''} \rightarrow \bR{w,x,d}$, $\bq' \mapsto \Phi'(\bq')$, is 
the inverse map of $\Phi$ above. Thus we have shown \eqref{eq:bijb}. 
By \eqref{eq:bijb}, 
$\# \bR{w,x,d} \ge 2$ if and only if $\# \bR{s_{j}w,s_{j}x,d''} \ge 2$; 
in this case, we compute
\begin{equation*}
\sum_{ \bp \in \bR{w,x,d} }
 (-1)^{\ell(\bp)} =
\sum_{ \bq \in \bR{s_{j}w,s_{j}x,d''} }
(-1)^{\ell(\Phi^{-1}(\bq))} = 
\sum_{ \bq \in \bR{s_{j}w,s_{j}x,d''} }
 (-1)^{\ell(\bq)} \stackrel{ \text{(IH)} }{=} 0.
\end{equation*}

\medskip

\paragraph{\bf Case 3.}
%
Assume that $x^{-1}\alpha_{j} \in \Delta_{\J}$; 
in this case, $x\WJs = s_{j}x\WJs$, and 
$y^{-1}\alpha_{j} \in \Delta_{\J}$ for all $y \in x \WJs$. 
For $w',x' \in W$ and $\delta \in Q^{\vee,+}$, we set 
\begin{equation*}
(\bR{w',x',\delta})^{\pm} := \bigl\{ \bp \in \bR{w',x',\delta} \mid 
(\ed(\bp))^{-1}\alpha_{j} \in \Delta^{\pm} \bigr\}. 
\end{equation*}

\medskip

\paragraph{\bf Subcase 3.1.}
%
Assume that $x^{-1}\alpha_{j} \in \DJsm$. 
First, let $\bp = ( \bp_{2},\bp_{1}) \in (\bR{w,x,d})^{+}$, 
and set $v:=\ed(\bp) = \ed(\bp_{2}) \in x \WJs$; 
recall that $v^{-1}\alpha_{j} \in \DJs$. 
By the same argument as in Case 2, we deduce that 
the label-increasing directed path $\bq_{2}$ 
from $s_{j}w$ to $s_{j}v$ is of the form 
$\bq_{2} : s_{j}w \edge{\zeta_{1},\dots,\zeta_{r}} s_{j}v$,
where $\zeta_1,\dots,\zeta_{r} \in \DJp$ 
such that $\zeta_1 \lhd \cdots \lhd \zeta_{r}$ 
and $\pair{\vpi_{i}}{\zeta_{t}^{\vee}}=2$ for all $1 \le t \le r$; 
note that $\ell(\bq_{2})= \ell(\bp_{2})$ and 
$\qwt(\bq_{2}) = \qwt( s_{j}w \Rightarrow s_{j}v) = 
\qwt(w \Rightarrow v) - \delta_{j0}w^{-1}\alpha_{j}^{\vee}+\delta_{j0}v^{-1}\alpha_{j}^{\vee}$.
Also, we see by Lemma~\ref{lem:DL}\,(1) that 
$\qwt(s_{j}v \Rightarrow x) = \qwt(v \Rightarrow x) - \delta_{j0} v^{-1}\alpha_{j}^{\vee}$. 
Hence, $\Phi(\bp):=(\bq_{2},\bt_{s_{j}v}) \in (\bR{s_{j}w,x,d'})^{-}$, where 
$d'=d - \delta_{j0}w^{-1}\alpha_{j}^{\vee}$; remark that $\ell(\Phi(\bp)) = \ell(\bp)$. 
As in Case 2, we can show that the map 
$\Phi:(\bR{w,x,d})^{+} \rightarrow (\bR{s_{j}w,x,d'})^{-}$, $\bp \mapsto \Phi(\bp)$,  
is bijective. 

Next, let $\bp = ( \bp_{2},\bp_{1}) \in (\bR{w,x,d})^{-}$, 
and set $u := \ed(\bp) = \ed(\bp_{2}) \in x \WJs$; 
note that $u^{-1}\alpha_{j} \in \DJsm$. 
Write $\bp_{2}$ as $\bp_{2}:w \edge{\beta_{1},\dots,\beta_{r}} u$. 
By Lemma~\ref{lem:DL}\,(1), there exists a directed path from $s_{j}w$ to $u$ 
of the form $\bq_{2}:s_{j}w \edge{\beta_{1},\dots,\beta_{t-1},\beta_{t+1},\dots,\beta_{r}} u$. 
Then we see that $\Psi(\bp):=(\bq_{2},\bt_{u}) \in (\bR{s_{j}w,x,d'})^{-}$; 
note that $\ell(\Psi(\bp)) = \ell(\bp) -1$. 
We can show that the map $\Psi:(\bR{w,x,d})^{-} \rightarrow (\bR{s_{j}w,x,d'})^{-}$, 
$\bp \mapsto \Psi(\bp)$, is bijective. 

Using the bijections $\Phi$ and $\Psi$ above, we compute
\begin{align*}
& \sum_{ \bp \in \bR{w,x,d} }
  (-1)^{\ell(\bp)} =
\sum_{ \bp \in (\bR{w,x,d})^{+} }
  (-1)^{\ell(\bp)} + 
\sum_{ \bp \in (\bR{w,x,d})^{-} }
  (-1)^{\ell(\bp)} \\[3mm]
& = \sum_{ \bq \in (\bR{s_{j}w,x,d'})^{-} }
  (-1)^{\ell(\Phi^{-1}(\bq))} + 
\sum_{ \bq \in (\bR{s_{j}w,x,d'})^{-} }
  (-1)^{\ell(\Psi^{-1}(\bq))} \\[3mm]
& = \sum_{ \bq \in (\bR{s_{j}w,x,d'})^{-} }
  (-1)^{\ell(\bq)} -
\sum_{ \bq \in (\bR{s_{j}w,x,d'})^{-} }
  (-1)^{\ell(\bq)} = 0. 
\end{align*}

\medskip

\paragraph{\bf Subcase 3.2.}
%
Assume that $x^{-1}\alpha_{j} \in \DJs$. 
Define $d'$ and $d''$ as in \eqref{eq:d'} and \eqref{eq:d''}, respectively. 
By the same argument as above, we can show that
\begin{itemize}
\item there exists a bijection 
$\Phi:(\bR{w,x,d})^{+} \rightarrow (\bR{s_{j}w,s_{j}x,d''})^{-}$ 
satisfying the condition that $\ell(\Phi(\bp)) = \ell(\bp)$ for 
$\bp \in (\bR{w,x,d})^{+}$ (see also Case 2); 

\item there exists a bijection 
$\Psi:(\bR{w,x,d})^{-} \rightarrow (\bR{s_{j}w,x,d'})^{-}$ 
satisfying the condition that $\ell(\Psi(\bp)) = \ell(\bp)-1$ 
for $\bp \in (\bR{w,x,d})^{-}$ (see also Case 1). 
\end{itemize}
In addition, we have 
\begin{equation} \label{eq:claim32a}
(\bR{s_{j}w,s_{j}x,d''})^{-} \supset (\bR{s_{j}w,x,d'})^{-}, 
\end{equation}
\begin{equation} \label{eq:claim32b}
(\bR{s_{j}w,s_{j}x,d''})^{+} = (\bR{s_{j}w,x,d'})^{+}. 
\end{equation}
Indeed, let us show that 
$(\bR{s_{j}w,s_{j}x,d''})^{\pm} \supset (\bR{s_{j}w,x,d'})^{\pm}$. 
Let $\bq = (\bq_{2},\bq_{1}) \in (\bR{s_{j}w,x,d'})^{\pm}$, 
and set $z := \ed(\bq) \in x\WJs$. 
Since $x^{-1}\alpha_{j} \in \DJs$ by the assumption of Subcase 3.2, 
there exists an edge $x \edge{x^{-1}\alpha_{j}} s_{j}x$; 
note that $\qwt (x \edge{x^{-1}\alpha_{j}} s_{j}x) = \delta_{j0}x^{-1}\alpha_{j}^{\vee}$. 
Because the concatenation of a shortest directed path from $z = \ed(\bq)$ to $x$ 
and the edge $x \edge{x^{-1}\alpha_{j}} s_{j}x$ is a directed path from $z$ to $s_{j}x$, 
it follows from Proposition~\ref{prop:qwt} that
\begin{equation} \label{eq:sc32b}
\qwt ( \ed(\bq) \Rightarrow s_{j}x ) \le 
\qwt ( \ed(\bq) \Rightarrow x ) + \delta_{j0}x^{-1}\alpha_{j}^{\vee}. 
\end{equation}
Since $\bq \in \bR{s_{j}w,x,d'}$, we have 
$\qwt ( \ed(\bq) \Rightarrow x ) \le d' - \qwt(\bq)$. Combining these inequalities, 
we obtain $\qwt ( \ed(\bq) \Rightarrow s_{j}x ) \le d'' - \qwt(\bq)$, 
which implies that $\bq \in \bQLS{s_{j}w,s_{j}x,d''}$. 
Since $x^{-1}\alpha_{j} \in \Delta_{\J}$, we see that 
$\pair{\vpi_{i}}{d''}=\pair{\vpi_{i}}{d'}$. 
Also, since $\bq \in \bR{s_{j}w,x,d'}$, we have 
$\pair{\vpi_{i}}{d'-\qwt(\bq)} = 0$. Combining these equalities, 
we get $\pair{\vpi_{i}}{d''-\qwt(\bq)} = 0$. 
Because $\bq = (\bq_{2},\bq_{1}) \in (\bR{s_{j}w,x,d'})^{\pm}$, 
it follows that $\ell(\bq_{1}) = 0$, $\ed(\bq) \in x\WJs = s_{j}x\WJs$, 
and $(\ed(\bq))^{-1}\alpha_{j} \in \Delta^{\pm}$. 
Thus we conclude that $\bq \in (\bR{s_{j}w,s_{j}x,d''})^{\pm}$. 
For the inclusion $(\bR{s_{j}w,s_{j}x,d''})^{+} \subset (\bR{s_{j}w,x,d'})^{+}$, 
let $\bq = (\bq_{2},\bq_{1}) \in (\bR{s_{j}w,s_{j}x,d''})^{+}$. 
Because $(\ed(\bq))^{-1}\alpha_{j} \in \Delta^{+}$ and 
$(s_{j}x)^{-1} \alpha_{j} \in \Delta^{-}$, we deduce by Lemma~\ref{lem:DL}\,(2) that 
$\bq = (\bq_{2},\bq_{1}) \in (\bR{s_{j}w,x,d'})^{+}$. 
Thus we have shown \eqref{eq:claim32a} and \eqref{eq:claim32b}. 

Using the bijections $\Phi$, $\Psi$ and 
\eqref{eq:claim32a}, \eqref{eq:claim32b}, we compute
\begin{align}
& \sum_{ \bp \in \bR{w,x,d} }
  (-1)^{\ell(\bp)} =
\sum_{ \bp \in (\bR{w,x,d})^{+} }
  (-1)^{\ell(\bp)} + 
\sum_{ \bp \in (\bR{w,x,d})^{-} }
  (-1)^{\ell(\bp)} \nonumber \\[3mm]
& = \sum_{ \bq \in (\bR{s_{j}w,s_{j}x,d''})^{-} }
  (-1)^{\ell(\bq)} - 
  \sum_{ \bq \in (\bR{s_{j}w,x,d'})^{-} }
  (-1)^{\ell(\bq)} \nonumber \\[3mm]
& = \sum_{ \bq \in (\bR{s_{j}w,s_{j}x,d''})^{-} }
  (-1)^{\ell(\bq)} - 
  \sum_{ \bq \in (\bR{s_{j}w,x,d'})^{-} }
  (-1)^{\ell(\bq)} \nonumber \\
& \hspace*{40mm}
  + \sum_{ \bq \in (\bR{s_{j}w,x,d'})^{+} }
  (-1)^{\ell(\bq)} 
  - \sum_{ \bq \in (\bR{s_{j}w,x,d'})^{+} }
  (-1)^{\ell(\bq)} \nonumber \\[3mm]
& = \sum_{ \bq \in (\bR{s_{j}w,s_{j}x,d''})^{-} }
  (-1)^{\ell(\bq)} - 
  \sum_{ \bq \in (\bR{s_{j}w,x,d'})^{-} }
  (-1)^{\ell(\bq)} \nonumber \\
& \hspace*{40mm}
  + \sum_{ \bq \in (\bR{s_{j}w,s_{j}x,d''})^{+} }
  (-1)^{\ell(\bq)} 
  - \sum_{ \bq \in (\bR{s_{j}w,x,d'})^{+} }
  (-1)^{\ell(\bq)} \nonumber \\[3mm]
& = 
  \sum_{ \bq \in \bR{s_{j}w,s_{j}x,d''} }
  (-1)^{\ell(\bq)} - 
  \sum_{ \bq \in \bR{s_{j}w,x,d'} }
  (-1)^{\ell(\bq)}. \label{eq:c32a}
\end{align}
Now, assume that $\# \bR{s_{j}w,x,d'} \ge 2$. 
Since $\# \bR{s_{j}w,s_{j}x,d''} \ge \# \bR{s_{j}w,x,d'} \ge 2$ 
by \eqref{eq:claim32a} and \eqref{eq:claim32b}, 
it follows by the induction hypothesis and \eqref{eq:c32a} that 
$\sum_{ \bp \in \bR{w,x,d} } (-1)^{\ell(\bp)} = 0$. 

Assume that $\# \bR{s_{j}w,x,d'} = 0$. 
Using the bijection 
$\Psi:(\bR{w,x,d})^{-} \rightarrow (\bR{s_{j}w,x,d'})^{-}$, 
we obtain $\# (\bR{w,x,d})^{-} = 0$. 
Since $\# \bR{w,x,d} \ge 2$ by assumption, 
we get $\# (\bR{s_{j}w,s_{j}x,d''})^{-} = \# (\bR{w,x,d})^{+} \ge 2$ by 
using the bijection 
$\Phi:(\bR{w,x,d})^{+} \rightarrow (\bR{s_{j}w,s_{j}x,d''})^{-}$. 
Hence, $\#\bR{s_{j}w,s_{j}x,d''} \ge 2$. 
Thus it follows by the induction hypothesis and \eqref{eq:c32a} that 
$\sum_{ \bp \in \bR{w,x,d} } (-1)^{\ell(\bp)} = 0$. 

Assume that $\# \bR{s_{j}w,x,d'} = 1$. 
Let $\bp_{\min} \in \bR{w,x,d} \ne \emptyset$ and 
$\bq_{\min} \in \bR{s_{j}w,x,d'} \subset \bR{s_{j}w,s_{j}x,d''}$ be such that 
\begin{equation*}
\ed(\bp_{\min}) = \tbmin{x}{\J}{w}=:v, \qquad 
\ed(\bq_{\min}) = \tbmin{x}{\J}{s_{j}w} =:u,
\end{equation*}
respectively; see Proposition~\ref{prop:Rtbmin}. 
We claim that 
\begin{equation} \label{eq:c4-c1}
\text{$v^{-1}\alpha_{j} \in \DJs$, and hence $\bp_{\min} \in (\bR{w,x,d})^{+}$}. 
\end{equation}
\begin{equation} \label{eq:c4-c2}
\text{if $u \ne s_{j}v$, then $u^{-1}\alpha_{j} \in \DJs$, 
and hence $\bq_{\min} \in (\bR{s_{j}w,x,d'})^{+} = (\bR{s_{j}w,s_{j}x,d''})^{+}$}. 
\end{equation}
First, let us show \eqref{eq:c4-c1}. Suppose, for a contradiction, that 
$v^{-1}\alpha_{j} \in \DJsm$. By Lemma~\ref{lem:DL}\,(2) and Lemma~\ref{lem:edge}, 
there exists a shortest directed path from $w$ to $v$ passing through $s_{j}v$, 
which implies that $s_{j}v \tb{w} v$. 
Because $s_{j}v \in x\WJs$, this contradicts the minimality 
of $v = \tbmin{x}{\J}{w}$. Thus we have shown \eqref{eq:c4-c1}. 
Next, let us show \eqref{eq:c4-c2}. Assume that 
$u^{-1}\alpha_{j} \in \DJsm$. By the minimality of $u = \tbmin{x}{\J}{s_{j}w}$, there exists a 
shortest directed path from $s_{j}w$ to $s_{j}v \in x\WJs$ passing through $u$. 
Note that all of $(s_{j}w)^{-1}\alpha_{j}$, $(s_{j}v)^{-1}\alpha_{j}$, and $u^{-1}\alpha_{j}$ 
are contained in $\Delta^{-}$. We see from Lemma~\ref{lem:DL}\,(3) that there exists a 
shortest directed path from $w$ to $v$ passing through $s_{j}u \in x\WJs$, 
which implies that $s_{j}u \tb{w} v$. Because $v \tb{w} s_{j}u$ 
by the minimality of $v = \tbmin{x}{\J}{w}$, we get $v = s_{j}u$. 
Thus we have shown \eqref{eq:c4-c2}. 

Suppose, for a contradiction, that $u \ne s_{j}v$; 
recall that $\#\bR{w,x,d} \ge 2$ and $\# \bR{s_{j}w,x,d'} = 1$. 
We have $(\bR{s_{j}w,s_{j}x,d''})^{+} = (\bR{s_{j}w,x,d'})^{+} = \bigl\{ \bq_{\min} \bigr\}$ 
by \eqref{eq:c4-c2}, and 
$\#(\bR{w,x,d})^{-} = \#(\bR{s_{j}w,x,d'})^{-} = 0$ 
by using the bijection $\Psi$. Since $\#\bR{w,x,d} \ge 2$, we get 
$\#(\bR{s_{j}w,s_{j}x,d''})^{-} = \#(\bR{w,x,d})^{+} = \#\bR{w,x,d}-
\#(\bR{w,x,d})^{-} =\#\bR{w,x,d} \ge 2$. 
Since Conjecture~\ref{conj:R1even} is assumed to hold, 
it follows that $\# (\bR{s_{j}w,s_{j}x,d''})^{-} \in 2\BZ_{\ge 1}$. 
Thus we obtain $\# \bR{s_{j}w,s_{j}x,d''} = 
\# (\bR{s_{j}w,s_{j}x,d''})^{+} + \# (\bR{s_{j}w,s_{j}x,d''})^{-} 
\in 1 + 2\BZ_{\ge 1}$, which contradicts the assumption that 
Conjecture~\ref{conj:R1even} is true. Thus we obtain $u = s_{j}v$, 
which implies that $(\bR{s_{j}w,x,d'})^{-} = \bigl\{ \bq_{\min} \bigr\}$ and 
$\# (\bR{s_{j}w,x,d'})^{+} = 0$. We see that 
$\# (\bR{s_{j}w,s_{j}x,d''})^{+} = \# (\bR{s_{j}w,x,d'})^{+} = 0$ and 
$\# (\bR{s_{j}w,s_{j}x,d''})^{-} \ge \# (\bR{s_{j}w,x,d'})^{-} = 1$. 
Suppose, for a contradiction, that 
$\# (\bR{s_{j}w,s_{j}x,d''})^{-} \ge 2$. 
Since $\# \bR{s_{j}w,s_{j}x,d''} = 
\# (\bR{s_{j}w,s_{j}x,d''})^{+} + \# (\bR{s_{j}w,s_{j}x,d''})^{-} = 
\# (\bR{s_{j}w,s_{j}x,d''})^{-}$, we obtain 
$\# (\bR{s_{j}w,s_{j}x,d''})^{-} \in 2\BZ_{\ge 1}$ by the assumption that 
Conjecture~\ref{conj:R1even} is true. 
Hence, 
$\# \bR{w,x,d} = 
\# (\bR{w,x,d})^{+} + \# (\bR{w,x,d})^{-} = 
\# (\bR{s_{j}w,s_{j}x,d''})^{-} + \# (\bR{s_{j}w,x,d'})^{-} \in 2\BZ_{\ge 1} + 1$, 
which contradicts the assumption that 
Conjecture~\ref{conj:R1even} is true. 
Thus we obtain $\# (\bR{s_{j}w,s_{j}x,d''})^{-} = 1$. 
Combining this equality and \eqref{eq:claim32a}, \eqref{eq:claim32b}, 
we conclude that $\bR{s_{j}w,s_{j}x,d''} = \bR{s_{j}w,x,d'}$. 
Therefore, $\sum_{ \bp \in \bR{w,x,d} } (-1)^{\ell(\bp)} = 0$ by \eqref{eq:c32a}. 

This completes the proof of Theorem~\ref{thm:main3}. 
\end{proof}

\subsection{Variation and its application.}
\label{subsec:variation}

Let $i \in I$ be such that $\vpi_{i}$ is quasi-minuscule. 
For $w,x \in W$ and $k \in \BZ_{\ge 0}$, we set
\begin{equation}
\hbR{w,x\WJs,k}:=\left\{ \bp=(\bp_{2},\bp_{1}) \in \bQLS{w} \ \Biggm| \ 
\begin{array}{l}
\ell(\bp_{1})=0, \, \pair{\vpi_{i}}{\qwt(\bp_{2})}=k, \\[2mm]
\ed(\bp) = \ed(\bp_{2}) \in x\WJs
\end{array} \right\};
\end{equation}
we should remark that for $w,x \in W$ and 
$d=\sum_{j \in I} d_{j}\alpha_{j}^{\vee} \in Q^{\vee,+}$, 
\begin{equation} \label{eq:RvshR}
\bR{w,x,d} = \bigl\{ 
\bp \in \hbR{w,x\WJs,d_{i}} \mid 
\qwt(\bp) + \qwt ( \ed(\bp) \Rightarrow x) \le d \bigr\}.
\end{equation}
We can prove the following proposition in exactly the same way as 
Proposition~\ref{prop:Rtbmin}. Here we remark that 
$\pair{\vpi_{i}}{\qwt(u \Rightarrow v)} = 0$, or equivalently, 
$\qwt(u \Rightarrow v) \in \QJvp$ since $u,v \in x\WJs$.
%
%
\begin{prop} \label{prop:Rtmin2}
Keep the notation and setting above. 
Assume that $\hbR{w,x\WJs,k} \ne \emptyset$. 
Let $v \in x\WJs$ be such that $v = \ed(\bp) = \ed(\bp_{2})$ for some (unique) 
$\bp=(\bp_{2},\bp_{1}) \in \hbR{w,x\WJs,k}$, and let $u \in x\WJs$ be such that $u \tb{w} v$ 
with respect to the $w$-tilted Bruhat order $\tb{w}$. 
Then there exists a unique $\bq = (\bq_{2},\bq_{1}) \in \hbR{w,x\WJs,k}$ such that 
$u = \ed(\bq) = \ed(\bq_{2})$. Therefore there exists a unique 
$\bp_{\min} \in \hbR{w,x\WJs,k}$ such that $\ed(\bp_{\min}) = \tbmin{x}{\J}{w}$.
\end{prop}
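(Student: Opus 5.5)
I will follow the proof of Proposition~\ref{prop:Rtbmin} step by step. The only change is that the defining conditions of $\hbR{w,x\WJs,k}$ replace those of $\bR{w,x,d}$. There are fewer of them to check: the inequality $\qwt(\ed(\bp) \Rightarrow x) \le d - \qwt(\bp)$ is gone, and only $\pair{\vpi_{i}}{\qwt(\bp_{2})}=k$ has to be preserved.

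Fix $\bp=(\bp_{2},\bt_{v}) \in \hbR{w,x\WJs,k}$ with $\ed(\bp_2)=v$, and let $u \in x\WJs$ with $u \tb{w} v$. Let $\bd: w \edge{\xi_1,\dots,\xi_a,\zeta_1,\dots,\zeta_b} u$ be the label-increasing path from $w$ to $u$ (Theorem~\ref{thm:inc}), with $\xi$'s in $\DJs$ and $\zeta$'s in $\DJp$.

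\emph{Step 1: $a=0$.} Concatenate $\bd$ with a shortest path from $u$ to $v$. Since $u \tb{w} v$, this gives a shortest path $\bd'$ from $w$ to $v$ with $\IL{\bd'}=\xi_1$. If $a \ge 1$, then \eqref{eq:ro} gives $\xi_1 \lhd \IL{\bp_2}$, because $\IL{\bp_2} \in \DJp$ (all labels of $\bp_2$ lie in $\DJp$). This contradicts the lexicographic minimality of $\bp_2$ in Theorem~\ref{thm:inc}. The trivial case $\bp_2=\bt_w$ needs no argument, since then $u=v=w$. Hence $\bd \in \bQBG{w}$.

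\emph{Step 2: the labels of $\bd$ are admissible.} $\bp_2$ lies in $\QBG_{(1/2)\vpi_i}(W)$, so by \cite[Lemma 6.7]{LNSSS2} the shortest path $\bd'$ with the same endpoints also does, and hence so does $\bd$. Thus $(\bd,\bt_u) \in \bQLS{w}$.

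\emph{Step 3: the degree condition.} $\bd'$ is shortest, so $\qwt(\bd)+\qwt(u\Rightarrow v)=\qwt(w\Rightarrow v)=\qwt(\bp_2)$. By Lemma~\ref{lem:WJs}, $\qwt(u \Rightarrow v) \in \QJvp$, so its pairing with $\vpi_i$ vanishes. Therefore $\pair{\vpi_i}{\qwt(\bd)}=\pair{\vpi_i}{\qwt(\bp_2)}=k$. Together with $\ed(\bd)=u \in x\WJs$, this gives $\bq:=(\bd,\bt_u) \in \hbR{w,x\WJs,k}$.

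\emph{Uniqueness and the final assertion.} Uniqueness of $\bq$ holds because any element with end $u$ has first component a label-increasing path from $w$ to $u$, and that path is unique. For the last assertion, take any $\bp \in \hbR{w,x\WJs,k} \ne \emptyset$ and apply the above with $u=\tbmin{x}{\J}{w}$; this element satisfies $u \tb{w} \ed(\bp)$ by Proposition~\ref{prop:tbmin}, which produces $\bp_{\min}$.

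The only delicate point is Step 1, namely the use of lexicographic minimality against the ordering \eqref{eq:ro}. Since it is verbatim the argument already given for Proposition~\ref{prop:Rtbmin}, I expect no real obstacle.
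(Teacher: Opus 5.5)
Your proposal is correct and matches the paper's proof. The paper states that the argument for Proposition~\ref{prop:Rtbmin} goes through verbatim here: the $\qwt$-inequality check is dropped, and the degree condition follows from $\qwt(u \Rightarrow v) \in \QJvp$ for $u,v \in x\WJs$, which is exactly your Step~3. Your explicit handling of the trivial case $\bp_2 = \bt_w$ and of why $\tbmin{x}{\J}{w} \tb{w} \ed(\bp)$ holds are harmless details that the paper leaves implicit.
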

%
%
\begin{thm}[cf. Conjecture~\ref{conj:R1even}] \label{thm:hR1even}
Let $i \in I$ be such that $\vpi_{i}$ is quasi-minuscule, 
and let $w,x \in W$, $k \in \BZ_{\ge 0}$. 
We have $\# \hbR{w,x\WJs,k} \in \{1\} \sqcup 2\BZ_{\ge 0}$. 
Moreover, if $\# \hbR{w,x\WJs,k} \in 2\BZ_{\ge 0}$, then 
\begin{equation} \label{eq:hR1}
\sum_{\bp \in \hbR{w,x\WJs,k}} (-1)^{\ell(\bp)} = 0.
\end{equation}
\end{thm}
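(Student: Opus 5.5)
We will prove both assertions simultaneously by induction on $m$, the length of a sequence $j_{1},\dots,j_{m}$ from Lemma~\ref{lem:qL} bringing $w$ into $\WJs$, copying the structure of the proof of Theorem~\ref{thm:main3}. The advantage of $\hbR{w,x\WJs,k}$ over $\bR{w,x,d}$ is that it depends only on the coset $x\WJs$ and on the single integer $k$. This removes the inequality constraint $\qwt(\bp)+\qwt(\ed(\bp)\Rightarrow x)\le d$, which is what forced the delicate comparison \eqref{eq:claim32a}--\eqref{eq:claim32b} and the appeal to Conjecture~\ref{conj:R1even} in Subcase~3.2.

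\emph{Base case $m=0$.} Here $w\in\WJs$. As in the proof of Theorem~\ref{thm:main3}, \cite[Lemma 3.3]{MNS1} and Lemma~\ref{lem:WJs} show that $\hbR{w,x\WJs,k}$ is $\{(\bt_{w},\bt_{w})\}$ if $x\in\WJs$ and $k=0$, and is empty otherwise. In either case the cardinality lies in $\{0,1\}$.

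\emph{Inductive step.} Set $j:=j_{1}$, so that $w^{-1}\alpha_{j}\in\DJp$. For the translated constructions we simply drop the $d$-bookkeeping and track only $k$. Write $k' := k-\delta_{j0}\pair{\vpi_{i}}{w^{-1}\alpha_{j}^{\vee}}$.
\begin{itemize}
\item In Case~1 ($x^{-1}\alpha_{j}\in\DJm$), the map $\Psi$ gives a bijection $\hbR{w,x\WJs,k}\to\hbR{s_{j}w,x\WJs,k'}$ that lowers $\ell$ by one.
\item In Case~2 ($x^{-1}\alpha_{j}\in\DJp$), the map $\Phi$ gives a bijection onto $\hbR{s_{j}w,s_{j}x\WJs,k''}$ that preserves $\ell$. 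Here $k''$ is obtained from $k$ by subtracting $\delta_{j0}\pair{\vpi_{i}}{w^{-1}\alpha_{j}^{\vee}}$ and adding $\delta_{j0}\pair{\vpi_{i}}{v^{-1}\alpha_{j}^{\vee}}$. That proof showed $\pair{\vpi_{i}}{v^{-1}\alpha_{j}^{\vee}}=2$ whenever the relevant path is not fixed. We must check that $k''$ is independent of $\bp$; we expect this because $v\in x\WJs$ and $\pair{\vpi_{i}}{v^{-1}\alpha_{j}^{\vee}}=\pair{x\vpi_{i}}{\alpha_{j}^{\vee}}$ is constant on the coset.
\end{itemize}
In both cases the cardinality is transported and the signed sum changes at most by a global sign, so both statements follow from the induction hypothesis.

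\emph{Case~3} ($x^{-1}\alpha_{j}\in\Delta_{\J}$, so that $s_{j}x\WJs=x\WJs$) is where the simplification pays off. Splitting into $(\cdot)^{\pm}$ according to the sign of $\ed(\bp)^{-1}\alpha_{j}$, we get two bijections:
\begin{itemize}
\item $\Phi:(\hbR{w,x\WJs,k})^{+}\to(\hbR{s_{j}w,x\WJs,k'})^{-}$, which preserves $\ell$;
\item $\Psi:(\hbR{w,x\WJs,k})^{-}\to(\hbR{s_{j}w,x\WJs,k'})^{-}$, which lowers $\ell$ by one.
\end{itemize}
Both targets are the same set, because the coset does not change; the $\delta_{j0}$ correction at $\ed(\bp)$ vanishes since $\pair{\vpi_{i}}{v^{-1}\alpha_{j}^{\vee}}=0$ for roots in $\Delta_{\J}$. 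Writing $A:=(\hbR{s_{j}w,x\WJs,k'})^{-}$, we obtain $\#\hbR{w,x\WJs,k}=2\#A$, which is even. The signed sum is $\sum_{A}(-1)^{\ell}-\sum_{A}(-1)^{\ell}=0$, so both claims hold directly, with no appeal to the induction hypothesis.

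\emph{Expected difficulties.} The main obstacle is verifying that the arguments of Cases~1--3 of Theorem~\ref{thm:main3} really go through with $k$ in place of $d$: that $\Phi$ and $\Psi$ preserve membership, including the $\QBG_{(1/2)\vpi_{i}}$ and ``label $=2$'' conditions via \cite[Lemma 6.7]{LNSSS2}, and that they are invertible, without using the inequality conditions. This mostly amounts to rereading those proofs and noting that the inequalities were carried along rather than used. One genuinely new point is the independence of $k''$ from $\bp$ in Case~2. If it fails, we would instead show that the set $\hbR{w,x\WJs,k}$ is split by the value $\pair{\vpi_{i}}{v^{-1}\alpha_{j}^{\vee}}\in\{0,2\}$ and treat each part separately.
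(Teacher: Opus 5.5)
Your proposal is correct and is essentially the paper's own proof: the same induction on $m$ via Lemma~\ref{lem:qL}, the same bijections $\Psi$ (Case~$1'$) and $\Phi$ (Case~$2'$) carried over from Theorem~\ref{thm:main3}, and in Case~$3'$ the same pair of bijections onto $(\hbR{s_{j}w,x\WJs,k'})^{-}$, which gives the cancellation without the induction hypothesis (your coset-invariance remark $\pair{\vpi_{i}}{v^{-1}\alpha_{j}^{\vee}}=\pair{x\vpi_{i}}{\alpha_{j}^{\vee}}$ is a quicker way than the paper's $\qwt$ computation to see that $k''$ does not depend on $\bp$, and your base case correctly includes the condition $k=0$, which the paper leaves implicit). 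One point to tighten: the inverse maps borrowed from Case~1 of Theorem~\ref{thm:main3} take some element of $\hbR{w,x\WJs,k}$, concatenate it with a $\DJs$-path inside $x\WJs$, and use \cite[Lemma 6.7]{LNSSS2} to certify that the new path from $w$ lies in $\QBG_{(1/2)\vpi_{i}}(W)$, so these bijections cannot be asserted unconditionally (in Case~$1'$ with $\pair{\vpi_{i}}{w^{-1}\alpha_{j}^{\vee}}=1$ the source is necessarily empty while the target need not be); you should first dispose of the trivial case $\#\hbR{w,x\WJs,k}\le 1$, exactly as the paper does by assuming $\#\hbR{w,x\WJs,k}\ge 2$.
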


\begin{proof}
By Lemma~\ref{lem:qL} (applied to the case where $\J=I \setminus \{i\}$), 
there exists a sequence $j_{1},j_{2},\dots,j_{m}$ of elements 
in $I_{\af}=I \sqcup \{0\}$ satisfying \eqref{eq:qL2}. 
We show the claim by induction on $m$. 
If $m = 0$, then we can show in exactly the same way as 
in the proof of Theorem~\ref{thm:main3} that 
\begin{equation*}
\hbR{w,x\WJs,k} = \begin{cases}
 \bigl\{ (\bt_{w},\bt_{w}) \bigr\} 
 & \text{if $x \in \WJs$}, \\
 \emptyset & \text{otherwise}, 
 \end{cases}
\end{equation*}
which proves the claims in the case where $m=0$. 

Assume that $m > 0$, and $\# \hbR{w,x\WJs,k} \ge 2$. 
For simplicity of notation, 
we set $j_{1}:=j$; recall that $w^{-1}\alpha_{j} \in \DJp$. 

\medskip

\paragraph{\bf Case 1'.}
%
Assume that $x^{-1}\alpha_{j} \in \DJm$. We can verify, 
as in Case 1 in the proof of Theorem~\ref{thm:main3}, that 
there exists a bijection 
\begin{equation*}
\Psi:\hbR{w,x\WJs,k} \rightarrow \hbR{s_{j}w,x\WJs,k'}
\end{equation*}
such that $\ell(\Psi(\bp)) = \ell(\bp)-1$ for 
$\bp \in \hbR{w,x\WJs,k}$, where 
\begin{equation} \label{eq:m'}
k':=k-\delta_{j0}\pair{\vpi_{i}}{w^{-1}\alpha_{j}^{\vee}}.
\end{equation}
Notice that $\# \hbR{s_{j}w,x\WJs,k'} = \# \hbR{w,x\WJs,k} \ge 2$. 
By the induction hypothesis, $\# \hbR{s_{j}w,x\WJs,k'} \in 2\BZ_{\ge 1}$, 
and hence $\# \hbR{w,x\WJs,k} \in 2\BZ_{\ge 1}$. Also, 
by using this bijection $\Psi$ and the induction hypothesis, 
we compute
\begin{equation*}
\sum_{ \bp \in \hbR{w,x\WJs,k} }
 (-1)^{\ell(\bp)} =
- \sum_{ \bq \in \hbR{s_{j}w,x\WJs,k'} }
 (-1)^{\ell(\bq)} \stackrel{ \text{(IH)} }{=} 0. 
\end{equation*}

\medskip

\paragraph{\bf Case 2'.}
%
Assume that $x^{-1}\alpha_{j} \in \DJp$. We can verify, 
as in Case 2 in the proof of Theorem~\ref{thm:main3}, that 
there exists a bijection 
\begin{equation*}
\Phi:\hbR{w,x\WJs,k} \rightarrow \hbR{s_{j}w,s_{j}x\WJs,k''}
\end{equation*}
such that $\ell(\Phi(\bp)) = \ell(\bp)$ for 
$\bp \in \hbR{w,x\WJs,k}$, where 
\begin{equation} \label{eq:m''}
k'':=k-\delta_{j0}\pair{\vpi_{i}}{w^{-1}\alpha_{j}^{\vee}} + 
\delta_{j0}\pair{\vpi_{i}}{x^{-1}\alpha_{j}^{\vee}}; 
\end{equation}
here we use the fact that $\qwt(\ed(\bp) \Rightarrow x) \in \QJvp$ and 
$\qwt(\ed(\Phi(\bp)) \Rightarrow s_{j}x) \in \QJvp$ for $\bp \in \hbR{w,x\WJs,k}$ 
since $\ed(\bp),\,x \in x\WJs$ and $\ed(\Phi(\bp)),\,s_{j}x \in s_{j}x\WJs$, and then 
compute 
\begin{align*}
\pair{\vpi_{i}}{\qwt(\Phi(\bp))} & = 
\pair{\vpi_{i}}{\qwt(\Phi(\bp)) + \qwt(\ed(\Phi(\bp)) \Rightarrow s_{j}x)} \\
& = \pair{\vpi_{i}}{\qwt(\bp) + \qwt(\ed(\bp) \Rightarrow x) - \delta_{j0}w^{-1}\alpha_{j}^{\vee} + 
\delta_{j0}x^{-1}\alpha_{j}^{\vee}} \\
& = \pair{\vpi_{i}}{\qwt(\bp)} - \delta_{j0}\pair{\vpi_{i}}{w^{-1}\alpha_{j}^{\vee}} + 
\delta_{j0} \pair{\vpi_{i}}{x^{-1}\alpha_{j}^{\vee}} = k''. 
\end{align*}
Notice that $\# \hbR{s_{j}w,s_{j}x\WJs,k''} = \# \hbR{w,x\WJs,k} \ge 2$. 
By the induction hypothesis, $\# \hbR{s_{j}w,s_{j}x\WJs,k''} \in 2\BZ_{\ge 1}$, 
and hence $\# \hbR{w,x\WJs,k} \in 2\BZ_{\ge 1}$. Also, 
by using this bijection $\Phi$ and the induction hypothesis, 
we compute
\begin{equation*}
\sum_{ \bp \in \hbR{w,x\WJs,k} }
 (-1)^{\ell(\bp)} =
\sum_{ \bq \in \hbR{s_{j}w,s_{j}x\WJs,k''} }
 (-1)^{\ell(\bq)} \stackrel{ \text{(IH)} }{=} 0. 
\end{equation*}

\medskip

\paragraph{\bf Case 3'.}
%
Assume that $x^{-1}\alpha_{j} \in \Delta_{\J}$; 
note that $x\WJs = s_{j}x\WJs$, and that 
$y^{-1}\alpha_{j} \in \Delta_{\J}$ for all $y \in x\WJs$, 
which implies that $\pair{\vpi_{i}}{y^{-1}\alpha_{j}^{\vee}} = 0$. 
Here, for $w',x' \in W$ and $p \in \BZ_{\ge 0}$, we set 
\begin{equation*}
(\hbR{w',x'\WJs,p})^{\pm} := \bigl\{ 
\bp \in \hbR{w',x'\WJs,p} \mid 
(\ed(\bp))^{-1}\alpha_{j} \in \Delta^{\pm} \bigr\}. 
\end{equation*}

Now, as in Case 2 (see also Subcases 3.1 and 3.2)
of the proof of Theorem~\ref{thm:main3}, we deduce that 
there exists a bijection $\Phi:
(\hbR{w,x\WJs,k})^{+} \rightarrow (\hbR{s_{j}w,x\WJs,k'})^{-}$
such that $\ell(\Phi(\bp))=\ell(\bp)$; 
here we note that $k'=k''$ since $\pair{\vpi_{i}}{x^{-1}\alpha_{j}^{\vee}} = 0$. 
Also, in exactly the same way as in Case 1 (see also Subcases 3.1 and 3.2)
in the proof of Theorem~\ref{thm:main3}, we deduce that 
there exists a bijection $\Psi:
(\hbR{w,x\WJs,k})^{-} \rightarrow (\hbR{s_{j}w,x\WJs,k'})^{-}$
such that $\ell(\Psi(\bp))=\ell(\bp)-1$. 
Using these bijections $\Psi$ and $\Phi$ above, we compute
\begin{align*}
& \sum_{ \bp \in \hbR{w,x\WJs,k} }
  (-1)^{\ell(\bp)} =
\sum_{ \bp \in (\hbR{w,x\WJs,k})^{+} }
  (-1)^{\ell(\bp)} + 
\sum_{ \bp \in (\hbR{w,x\WJs,k})^{-} }
  (-1)^{\ell(\bp)} \\[3mm]
& = \sum_{ \bq \in (\hbR{s_{j}w,x\WJs,k'})^{-} }
  (-1)^{\ell(\bq)} -
\sum_{ \bq \in (\hbR{s_{j}w,x\WJs,k'})^{-} }
  (-1)^{\ell(\bq)} = 0. 
\end{align*}
Also, by this equality and the assumption that $\#\hbR{w,x\WJs,k} \ge 2$, 
we see that $\# \hbR{w,x\WJs,k} \in 2\BZ_{\ge 1}$.

This completes the proof of Theorem~\ref{thm:hR1even}.
\end{proof}

We set
\begin{equation}
D := \sum_{\beta \in \Delta^{+}_{2}} \beta^{\vee} + 
\sum_{\gamma \in \DJs} \gamma^{\vee} \in Q^{\vee,+},
\end{equation}
where $\Delta^{+}_{2}:=\bigl\{ \beta \in \Delta^{+} \mid 
\pair{\vpi_{i}}{\beta^{\vee}} = 2 \bigr\}$. 
Let $w,x \in W$, and $k \in \BZ_{\ge 0}$. We claim that 
\begin{equation} \label{eq:D}
\qwt(\bp_2) + \qwt(\ed(\bp_2) \Rightarrow x) \le D \quad 
\text{for all $\bp=(\bp_{2},\bp_{1}) \in \hbR{w,x\WJs,k}$}.
\end{equation}
Indeed, let $\bp=(\bp_{2},\bp_{1}) \in \hbR{w,x\WJs,k}$, 
and set $z:=\ed(\bp) = \ed(\bp_2)$. 
By the definition, $\bp_{2}$ is a label-increasing (and hence shortest) 
directed path from $w$ to $z$ whose labels are all contained in $\Delta^{+}_{2}$. 
Hence we have
\begin{equation} \label{eq:D1}
\qwt(\bp_2) \le \sum_{\beta \in \Delta^{+}_{2}} \beta^{\vee}. 
\end{equation}
Also, since $z \in x\WJs$, it follows from Lemma~\ref{lem:WJs} that 
the labels of a label-increasing (and hence shortest) directed path 
from $z$ to $x$ are all contained in $\DJs$. Hence we have
\begin{equation} \label{eq:D2}
\qwt(z \Rightarrow x) \le \sum_{\gamma \in \DJs} \gamma^{\vee}. 
\end{equation}
Combining \eqref{eq:D1} and \eqref{eq:D2}, 
we obtain \eqref{eq:D}, as desired. Write $D$ as 
$D=\sum_{j \in I} D_{j} \alpha_{j}^{\vee}$ 
with $D_{j} \in \BZ_{\ge 0}$ for $j \in I$. 
%
%
\begin{rem} \label{rem:hR1even}
By \eqref{eq:RvshR} and \eqref{eq:D}, 
in order to prove Conjecture~\ref{conj:R1even}, 
it suffices to show that $\# \bR{w,x,d} \in \{ 1 \} \sqcup 2\BZ_{\ge 0}$ 
for all $w,x \in W$ and $d \in Q^{\vee,+}$ such that $d \le D$; 
notice that $W$ and $\bigl\{ d \in Q^{\vee,+} \mid d \le D \bigr\}$ are finite sets. 
Indeed, let $d = \sum_{j \in I} d_{j} \alpha_{j}^{\vee}$ 
be an arbitrary element of $Q^{\vee,+}$. 
If $d_{i} > D_{i}$, then eqref{eq:D} and 
\eqref{eq:RvshR} imply that $\bR{w,x,d} \subset \hbR{w,x\WJs,d_{i}} = \emptyset$ 
for all $w,x \in W$. Assume that $d_{i} \le D_{i}$, and set
\begin{equation*}
\ti{d}:=\sum_{j \in I} \min \{ D_{j}, d_{j} \} \alpha_{j}^{\vee}; 
\end{equation*}
notice that $\ti{d} \le D$ and $\ti{d} \le d$. We claim that 
$\bR{w,x,d}=\bR{w,x,\ti{d}}$ for all $w,x \in W$. 
Since $\ti{d} \le d$ and $\min \{ D_{i}, d_{i} \} = d_{i}$ 
(and hence $\hbR{w,x\WJs,d_{i}} = \hbR{w,x\WJs,\min \{ D_{i}, d_{i} \}}$), 
we see by \eqref{eq:RvshR} that $\bR{w,x,d} \supset \bR{w,x,\ti{d}}$. 
Let $\bp \in \bR{w,x,d}$; 
note that $\bp \in \hbR{w,x\WJs,d_{i}} = \hbR{w,x\WJs,\min \{ D_{i}, d_{i} \}}$. 
Hence, by \eqref{eq:RvshR}, it suffices to show that $\delta \le \ti{d}$, 
where $\delta:=\qwt(\bp) + \qwt(\ed(\bp) \Rightarrow x)$. 
Write $\delta$ as $\delta = \sum_{j \in I} c_{j}\alpha_{j}^{\vee}$. 
By \eqref{eq:RvshR}, we have $\delta \le d$ since $\bp \in \bR{w,x,d}$. 
Also, by \eqref{eq:D}, we have $\delta \le D$. Therefore we get $\delta \le \ti{d}$, 
as desired. Thus we have proved the claim, which implies that 
$\# \bR{w,x,d}=\# \bR{w,x,\ti{d}}$. 
\end{rem}
%
%
\begin{cor} \label{cor:hR1even}
Assume that $\vpi_{i}$ is quasi-minuscule. 
Let $w,x \in W$, and let $d = \sum_{j \in I} d_{j}\alpha_{j}^{\vee} \in Q^{\vee,+}$. 
If $d_{j} \ge D_{j}$ for all $j \in \J=I \setminus \{i\}$, then 
$\bR{w,x,d} = \hbR{w,x\WJs,d_{i}}$. Therefore, by Theorem~\ref{thm:hR1even}, 
$\#\bR{w,x,d} \in \{1\} \sqcup 2\BZ_{\ge 0}$, 
and if $\#\bR{w,x,d} \ge 2$, then 
$\sum_{\bp \in \bR{w,x,d}} (-1)^{\ell(\bp)}=0$; 
in this case, 
$\langle \CO^{s_i}, \CO^{w}, \CO_{x} \rangle_{d} =  1$ 
by Corollary~\ref{cor:twp}. 
\end{cor}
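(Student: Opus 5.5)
The plan is to reduce everything to \eqref{eq:RvshR} and \eqref{eq:D}. By \eqref{eq:RvshR}, $\bR{w,x,d}$ is the subset of $\hbR{w,x\WJs,d_{i}}$ cut out by the condition $\delta(\bp) \le d$, where
\[
\delta(\bp):=\qwt(\bp)+\qwt(\ed(\bp)\Rightarrow x).
\]
So it suffices to show that this condition holds automatically for every $\bp \in \hbR{w,x\WJs,d_{i}}$ when $d_{j} \ge D_{j}$ for all $j \in \J$.

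Fix $\bp=(\bp_{2},\bp_{1}) \in \hbR{w,x\WJs,d_{i}}$ and write $\delta(\bp)=\sum_{j \in I} c_{j}\alpha_{j}^{\vee}$. Since $\ell(\bp_{1})=0$, we have $\qwt(\bp)=\qwt(\bp_{2})$. By \eqref{eq:D}, $\delta(\bp) \le D$, so $c_{j} \le D_{j} \le d_{j}$ for every $j \in \J$.

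It remains to check the $i$-th coefficient. We have $c_{i}=\pair{\vpi_{i}}{\delta(\bp)}$. By Lemma~\ref{lem:WJs}, since $\ed(\bp), x \in x\WJs$, the term $\qwt(\ed(\bp)\Rightarrow x)$ lies in $\QJvp$ and contributes nothing to the pairing. Hence $c_{i}=\pair{\vpi_{i}}{\qwt(\bp_{2})}=d_{i}$ by the definition of $\hbR{w,x\WJs,d_{i}}$. Therefore $\delta(\bp) \le d$, so $\bp \in \bR{w,x,d}$. This gives $\hbR{w,x\WJs,d_{i}} \subset \bR{w,x,d}$; the reverse inclusion is immediate from \eqref{eq:RvshR}, so the two sets are equal.

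With the equality established, Theorem~\ref{thm:hR1even} gives $\#\bR{w,x,d} \in \{1\}\sqcup 2\BZ_{\ge 0}$. If $\#\bR{w,x,d} \ge 2$, the cardinality is even and positive, and \eqref{eq:hR1} gives vanishing of the signed sum $\sum_{\bp}(-1)^{\ell(\bp)}$. Since $\bR{w,x,d}\neq\emptyset$, Corollary~\ref{cor:twp} gives $\langle \CO^{w},\CO_{x}\rangle_{d}=1$. Substituting both facts into \eqref{eq:thp-qm} yields $\langle \CO^{s_i},\CO^{w},\CO_{x}\rangle_{d}=1$.

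I do not expect a real obstacle, since all the substantive work sits in Theorem~\ref{thm:hR1even} and the bound \eqref{eq:D}. The only point needing care is the $i$-th coordinate, which is not controlled by $D_{i}$. It is handled exactly by the pairing condition built into $\hbR{w,x\WJs,d_{i}}$ together with Lemma~\ref{lem:WJs}.
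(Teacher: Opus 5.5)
Your proof is correct and follows essentially the same route as the paper. You reduce via \eqref{eq:RvshR} to checking $\qwt(\bp_{2})+\qwt(\ed(\bp_{2})\Rightarrow x)\le d$, bound the $\J$-coordinates by \eqref{eq:D}, and show the $i$-th coordinate equals $d_{i}$ using Lemma~\ref{lem:WJs} and the defining condition of $\hbR{w,x\WJs,d_{i}}$; the remaining deductions from Theorem~\ref{thm:hR1even}, Corollary~\ref{cor:twp}, and \eqref{eq:thp-qm} are likewise as in the paper.
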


\begin{proof}
In order to prove $\bR{w,x,d} = \hbR{w,x\WJs,d_{i}}$, 
it suffices to show that 
$\qwt(\bp_{2}) + \qwt ( \ed(\bp_{2}) \Rightarrow x) \le d$ 
for all $\bp=(\bp_{2},\bp_{1}) \in \hbR{w,x\WJs,d_{i}}$; 
see \eqref{eq:RvshR}. Let $\bp=(\bp_{2},\bp_{1}) \in \hbR{w,x\WJs,d_{i}}$, 
and write $\delta:=\qwt(\bp_{2}) + \qwt ( \ed(\bp_{2}) \Rightarrow x)$ as 
$\delta = \sum_{j \in I} c_{j}\alpha_{j}^{\vee}$. 
By \eqref{eq:D}, we see that $c_{j} \le D_{j} \le d_{j}$ 
for all $j \in \J=I \setminus \{i\}$. It remains to show that 
$c_{i} \le d_{i}$. Since $\qwt ( \ed(\bp_{2}) \Rightarrow x) \in \QJvp$, 
and since $\bp=(\bp_{2},\bp_{1}) \in \hbR{w,x\WJs,d_{i}}$, 
it follows that
\begin{equation*}
\pair{\vpi_{i}}{\delta} = \pair{\vpi_{i}}{\qwt(\bp_{2})} = d_{i}. 
\end{equation*}
Also, we see that $\pair{\vpi_{i}}{\delta} = c_{i}$. 
Thus we get $c_{i} = d_{i}$. Therefore we conclude that 
$\qwt(\bp_{2}) + \qwt ( \ed(\bp_{2}) \Rightarrow x) \le d$, 
which implies that $\bR{w,x,d} = \hbR{w,x\WJs,d_{i}}$. 
This completes the proof of the corollary. 
\end{proof}

\appendix

%
\section{Examples.}
\label{sec:example}

In Appendix~\ref{sec:example}, 
we assume that $\Fg$ is of type $B_{n}$ with $I=\{1,2,\dots,n\}$,
where our numbering of the nodes of the Dynkin diagram is 
the same as that in \cite[Section 11.4]{H} 
($\alpha_{n}$ is the unique short simple root). 
We recall that all the fundamental weights $\vpi_{i}$ are quasi-minuscule, 
and hence we can take $N=N_{i}=2$. 
We write an element $\bp=(\bp_{2},\bp_{1}) \in \bR{w,x,d}$ 
simply as $\bp_{2}$; recall that $\bp_{1} = \bt_{\ed(\bp_2)}$. 
Also, for an edge $x \edge{\alpha} y$ in $\QBG(W)$, 
we write $x \Be{\alpha} y$ (resp., $x \Qe{\alpha} y$) 
to indicate that the edge is a Bruhat (resp., quantum) edge. 
%
%
\begin{ex} \label{ex:i2}
Assume that $n \ge 3$ and $i = 2$. 
Recall that $\J=I \setminus \{2\}$. 
In this case, we deduce that $s_{\theta} = \mcr{\lng}^{\J} \in \WJ$, 
with $\lng$ the longest element of $W$, and that 
$s_{\theta}s_{j} = s_{j} s_{\theta}$ for all $j \in \J$, where 
$\theta = \alpha_{1}+2\alpha_{2}+\cdots+2\alpha_{n} \in \Delta^{+}$ is the highest root; 
note that $\theta = \vpi_{2}$. 

\begin{claim} \mbox{} \label{c:exa}
Let $d=\sum_{j \in I} d_{j}\alpha_{j}^{\vee} \in Q^{\vee,+}$ be such that $d_{2}=2$, 
and let $w,x \in W$. 
\begin{enu}
\item If $w \notin s_{\theta}\WJs$ or $x \notin \WJs$, then 
$\bR{w,x,d} = \emptyset$. 

\item Assume that $w \in s_{\theta}\WJs$ and $x \in \WJs$. 
Write $w$ as $w = s_{\theta}v$ with $v \in \WJs$. Then,
\begin{equation}
\bR{w,x,d} = 
\begin{cases}
\Bigl\{ w = s_{\theta}v = v s_{\theta} \Qe{\theta} v \Bigr\}
& \text{if $d \ge \theta^{\vee}+\qwt(v \Rightarrow x)$}, \\
\emptyset & \text{otherwise}. 
\end{cases}
\end{equation}
\end{enu}
\end{claim}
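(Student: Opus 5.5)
The plan is to first pin down which roots can occur as labels of $\bp_{2}$ for $\bp=(\bp_{2},\bp_{1})\in\bR{w,x,d}$. In type $B_{n}$ with $i=2$, a positive root $\beta$ has $\pair{\vpi_{2}}{\beta^{\vee}}=2$ only when $\beta$ is long with $\alpha_{2}$-coefficient $2$; among long roots, $\alpha_{2}$-coefficient $2$ forces $\beta=\alpha_{1}+2\alpha_{2}+\cdots+2\alpha_{n}=\theta$ (for $n\ge3$, the long roots $e_{a}\pm e_{b}$ with $\alpha_{2}$-coefficient $2$ reduce to $e_{1}+e_{2}$). I would verify this directly from the explicit list of roots of $B_{n}$. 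Hence $\Delta^{+}_{2}=\{\theta\}$, and every $\bp_{2}$ has length $0$ or $1$, with label $\theta$ in the latter case. Moreover, $\pair{\vpi_{2}}{\qwt(\bp_{2})}=d_{2}=2$ forces $\bp_{2}$ to be a single \emph{quantum} edge labelled $\theta$. The reason is that the trivial path contributes $0$, and a Bruhat edge contributes $0$ to $\qwt$.

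The edge $w\edge{\theta}ws_{\theta}$ is quantum exactly when $\ell(ws_{\theta})=\ell(w)+1-2\pair{\rho}{\theta^{\vee}}=\ell(w)-\ell(s_{\theta})$. I would use the structure stated in the example: $s_{\theta}=\mcr{\lng}^{\J}$ and $s_{\theta}$ commutes with $\WJs$. Writing $w=\mcr{w}y$ with $y\in\WJs$, the condition $\ell(ws_{\theta})=\ell(w)-\ell(s_{\theta})$ becomes $\ell(\mcr{w}s_{\theta}y)=\ell(\mcr{w}y)-\ell(s_{\theta})$. Using lengths of minimal coset representatives, this happens iff $\mcr{w}=s_{\theta}$, that is, iff $w\in s_{\theta}\WJs$. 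Indeed, $\ell(\mcr{w}s_{\theta}y)\ge\ell(y)$ always, and $\ell(\mcr{w}y)-\ell(s_{\theta})\le\ell(y)$ because $\ell(\mcr{w})\le\ell(\mcr{\lng}^{\J})=\ell(s_{\theta})$, with equality iff $\mcr{w}=s_{\theta}$. In that case $w=s_{\theta}v$, the end point is $ws_{\theta}=v s_{\theta}s_{\theta}=v\in\WJs$, and the edge lies in $\QBG_{(1/2)\vpi_{2}}(W)$ since $\pair{\vpi_{2}}{\theta^{\vee}}=2$.

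With this, $\bR{w,x,d}\neq\emptyset$ requires $w\in s_{\theta}\WJs$ and $\ed(\bp)=v\in x\WJs$. Since $v\in\WJs$, the latter means $x\in\WJs$, which proves (1). For (2), the unique candidate is $\bp_{2}:s_{\theta}v\Qe{\theta}v$. It is trivially label-increasing, and $\qwt(\bp)=\theta^{\vee}$. Since $\theta^{\vee}=\alpha_{1}^{\vee}+2\alpha_{2}^{\vee}+\cdots+2\alpha_{n-1}^{\vee}+\alpha_{n}^{\vee}$ has $\alpha_{2}^{\vee}$-coefficient $2=d_{2}$, the condition $\pair{\vpi_{2}}{d-\qwt_{2}(\bp)}=0$ holds. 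The remaining condition in the definition is exactly $\qwt(v\Rightarrow x)\le d-\theta^{\vee}$.

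I expect the main obstacle to be the quantum-edge characterization, namely showing the edge is quantum exactly for $w\in s_{\theta}\WJs$. I would try the length argument above first. As a fallback, I would use $\ell(s_{\theta})=2\pair{\rho}{\theta^{\vee}}-1$ together with the fact that a $\theta$-edge from $w$ exists in $\QBG(W)$ iff it is quantum or Bruhat with a length jump of $1$; the Bruhat option is excluded whenever $\ell(s_{\theta})>1$ and the $\alpha_{2}^{\vee}$ count matters.
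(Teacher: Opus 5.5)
\textbf{There is a genuine gap, and it is in your first step: $\Delta^{+}_{2}$ is not $\{\theta\}$.} Since $\alpha_{2}$ is long (as $n \ge 3$), for a \emph{short} root $\beta=\sum_{j}c_{j}\alpha_{j}$ the $\alpha_{2}^{\vee}$-coefficient of $\beta^{\vee}$ is $2c_{2}$, not $c_{2}$. So the short roots $\beta_{1}=\alpha_{1}+\cdots+\alpha_{n}$ and $\beta_{2}=\alpha_{2}+\cdots+\alpha_{n}$ also satisfy $\pair{\vpi_{2}}{\beta^{\vee}}=2$. Hence $\Delta^{+}_{2}=\{\beta_{1},\beta_{2},\theta\}$, exactly as the paper lists.

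A priori, then, $\bp_{2}$ can have up to three edges. Since $\theta=\beta_{1}+\beta_{2}$, convexity places $\theta$ between $\beta_{1}$ and $\beta_{2}$ in $\lhd$, so a label-increasing path could have an edge both before and after a $\theta$-edge. Your claims that $\bp_{2}$ has length at most $1$, that the quantum $\theta$-edge starts at $w$, and that $\ed(\bp)=ws_{\theta}$ are therefore unjustified. Part (1) and the uniqueness in (2) both rest on them.

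Two ingredients are missing.

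First, $\beta_{1}$ and $\beta_{2}$ are not quantum roots. This follows from \cite[Lemma~4.2]{LNSSS1}, or directly from $\ell(s_{\beta_{1}})=2n-1\ne 4n-3=2\pair{\rho}{\beta_{1}^{\vee}}-1$, and similarly for $\beta_{2}$. So edges with these labels are Bruhat. This matters because a quantum $\beta_{1}$- or $\beta_{2}$-edge would also contribute $2$ to $\pair{\vpi_{2}}{\qwt(\bp_{2})}$. With this fact, $d_{2}=2$ forces exactly one quantum edge, labelled $\theta$, but possibly flanked by Bruhat $\beta_{1}$- or $\beta_{2}$-edges.

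Second, and this is the real missing idea, you must rule out those flanking edges. You need that there is no edge $y'\edge{\beta}y$ with $\beta\in\Delta^{+}_{2}$ and $y\in s_{\theta}\WJs$, and no edge $z\edge{\beta}z'$ with $\beta\in\Delta^{+}_{2}$ and $z\in\WJs$ (part (c) in the paper). The paper obtains this from QLS paths. Such an edge lies in $\QBG_{(1/2)\vpi_{2}}(W)$ with label in $\DJp$, so \cite[Lemma~6.2]{LNSSS2} produces $(\mcr{z'},e)\in\QLS(\vpi_{2})$ with $\mcr{z'}\ne e$ (resp.\ $(\mcr{\lng},\mcr{y'})$ with $\mcr{y'}\ne\mcr{\lng}$). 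This contradicts \cite[Lemma~3.3]{MNS1} (resp.\ its dual). Once (c) is in hand, the unique quantum edge of $\bp_{2}$ must be both its first and its last edge, and the rest of your argument goes through.

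Your length-count characterization of quantum $\theta$-edges is correct and is a good substitute for the paper's Bruhat-order proof of its parts (a) and (b). It uses $\mcr{w}s_{\theta}\in\WJ$, which holds because $s_{\theta}$ fixes $\alpha_{j}$ for $j\in\J$. It simply does not address (c).
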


\noindent
{\it Proof of Claim~\ref{c:exa}.}
We set $\Delta^{+}_{2}:=\bigl\{ \beta \in \Delta^{+} \mid \pair{\vpi_{2}}{\beta^{\vee}} = 2 \bigr\}$; 
notice that 
\begin{equation*} 
\Delta^{+}_{2}=\bigl\{ \beta_1 := \alpha_1+\cdots+\alpha_{n},\,
\beta_{2} := \alpha_{2}+\cdots+\alpha_{n}, \, \theta \bigr\}, 
\end{equation*}
where $\beta_{1}$ and $\beta_{2}$ are short roots, and 
$\theta$ is a long root. In order to prove the claim above, 
it suffices to show the following: 
\begin{enu}
\item[(a)] If $y \Qe{\beta} z$ for some 
$y,z \in W$ and $\beta \in \Delta^{+}_{2}$, 
then $\beta = \theta$ and $y \in s_{\theta}\WJs$ and $z \in \WJs$. 

\item[(b)] For each $v \in \WJs$, there exists a quantum edge $s_{\theta}v \Qe{\theta} v$. 

\item[(c)] There does not exist an edge of the form 
$y' \edge{\beta} y$ for any $\beta \in \Delta^{+}_{2}$ and $y \in s_{\theta}\WJs$, $y' \in W$.
Also, there does not exist an edge of the form 
$z \edge{\beta} z'$ for any $\beta \in \Delta^{+}_{2}$ and $z \in \WJs$, $z' \in W$. 
\end{enu}
First, let us show part (a). Recall that $\ell(s_{\gamma}) \le 2 \pair{\rho}{\gamma^{\vee}} -1$ 
for all $\gamma \in \Delta^{+}$; see, e.g., \cite[Lemma 4.1]{LNSSS1}. 
We call $\gamma \in \Delta^{+}$ a quantum root if 
$\ell(s_{\gamma}) = 2 \pair{\rho}{\gamma^{\vee}} -1$. 
Since $\ell(u s_{\gamma}) \ge \ell(u) - \ell(s_{\gamma})$, we deduce that 
if $u \Qe{\gamma} us_{\gamma}$, then $\gamma$ is a quantum root. 
Now, assume that $y \Qe{\beta} z$. Because neither $\beta_{1}$ nor $\beta_{2}$ 
is a quantum root (see, e.g., \cite[Lemma~4.2]{LNSSS1}), 
it follows that $\beta \ne \beta_1, \beta_{2}$, and hence $\beta = \theta$. 
Write $y$ as $y = v s_{\theta}$ 
for some $v \in W$ such that $\ell(y) = \ell(v) + \ell(s_{\theta})$. 
Note that $\ell(s_{\theta}v^{-1}) = \ell(s_{\theta}) + \ell(v^{-1})$. 
Since $s_{\theta}v^{-1} \ge s_{\theta}$, we have 
$\mcr{s_{\theta}v^{-1}} \ge \mcr{s_{\theta}} = \mcr{\lng}$. 
Hence we get $\mcr{s_{\theta}v^{-1}} = \mcr{\lng} = s_{\theta}$, which implies that 
$s_{\theta}v^{-1} \in s_{\theta}\WJs$. Therefore, $v \in \WJs$. 
Because $s_{\theta}s_{j} = s_{j} s_{\theta}$ for all $j \in \J$, 
we see that $y = vs_{\theta} = s_{\theta}v \in s_{\theta}\WJs$, 
and also $z = ys_{\theta} = v \in \WJs$. Thus we have shown part (a). 

Next, let us show part (b). Let $v \in \WJs$. 
As seen above, we have $s_{\theta}v = vs_{\theta}$. 
Also, since $s_{\theta} \in \WJ$, it follows that 
$\ell(vs_{\theta}) = \ell(s_{\theta}v) = \ell(s_{\theta}) + \ell(v)$, 
and hence $\ell(v) = \ell(vs_{\theta}) - \ell(s_{\theta}) = 
\ell(vs_{\theta}) - 2 \pair{\rho}{\theta^{\vee}} + 1$; recall that $\theta$ is a quantum root. 
Therefore there exists a quantum edge $s_{\theta}v \Qe{\theta} v$. 
Thus we have shown part (b). 

Finally, let us show part (c). Suppose, for a contradiction, that 
there exists an edge $z \edge{\beta} z'$ for some $\beta \in \Delta^{+}_{2}$ and 
$z \in \WJs$, $z' \in W$. Note that $\mcr{z'} \ne \mcr{z} = e$ since $\beta \in \DJp$. 
By \cite[Lemma~6.2]{LNSSS2}, we deduce that $(\mcr{z'},\mcr{z}) = (\mcr{z'},e) \in \QLS(\vpi_{2})$. 
However, this contradicts \cite[Lemma 3.3]{MNS1} since $\mcr{z'} \ne e$. 
Similarly, suppose, for a contradiction, that 
there exists an edge $y' \edge{\beta} y$ for some $\beta \in \Delta^{+}_{2}$ and 
$y \in s_{\theta}\WJs$, $y' \in W$. Note that $\mcr{y'} \ne \mcr{y} = \mcr{\lng}$ since $\beta \in \DJp$. 
By \cite[Lemma~6.2]{LNSSS2}, we deduce that $(\mcr{y},\mcr{y'}) = (\mcr{\lng},\mcr{y'}) \in \QLS(\vpi_{2})$. 
However, this contradicts (the dual version of) \cite[Lemma 3.3]{MNS1}; see also \cite[Section 4.5]{LNSSS2}. 
Thus we have shown part (c). \bqed
\end{ex}

%
\begin{ex} \label{ex:b4i3a}
Assume that $n = 4$ and $i = 3$. 
Let $w = s_3s_4s_2s_3s_4s_2s_3$, and $x = e$. 
We have the following directed paths $\bp_{1}$ and $\bp_{2}$: 
\begin{equation*}
\bp_{1} : w \Be{\alpha_1+\alpha_2+2\alpha_3+2\alpha_4}
s_3s_4s_1s_2s_3s_4s_2s_3 \Qe{\alpha_2+2\alpha_3+2\alpha_4} s_1 \in x\WJs, 
\end{equation*}
\begin{equation*}
\bp_{2} : w \Qe{\alpha_2+2\alpha_3+2\alpha_4} e = \tbmin{x}{\J}{w} \in x\WJs; 
\end{equation*}
note that $\ell(\bp_{1})=\ell(\bp_{2})+1$, 
$\qwt(\bp_{1}) = \qwt(\bp_{2}) = (\alpha_2+2\alpha_3+2\alpha_4)^{\vee} = 
\alpha_{2}^{\vee}+2\alpha_{3}^{\vee}+\alpha_{4}^{\vee}$, 
and $\qwt(\ed(\bp_{1}) \Rightarrow x) = \alpha_{1}^{\vee}$, 
$\qwt(\ed(\bp_{2}) \Rightarrow x) = 0$. We can check that 
\begin{equation*}
\bR{w,x,d} = 
  \begin{cases}
  \bigl\{ \bp_{2} \bigr\} & 
  \text{if $d = d_{2} \alpha_{2}^{\vee}+2\alpha_{3}^{\vee}+ d_{4}\alpha_{4}^{\vee}$
  with $d_{2},d_{4} \in \BZ_{\ge 1}$}, \\
  \bigl\{ \bp_{1}, \bp_{2} \bigr\} & 
  \text{if $d = d_{1}\alpha_{1}^{\vee} + d_{2} \alpha_{2}^{\vee}+2\alpha_{3}^{\vee} + d_{4}\alpha_{4}^{\vee}$
  with $d_{1},d_{2},d_{4} \in \BZ_{\ge 1}$}, \\
  \emptyset & \text{otherwise}. 
  \end{cases}
\end{equation*}
\end{ex}
%
%
\begin{ex} \label{ex:b4i3b}
Assume that $n = 4$ and $i = 3$. 
Let $w = s_3s_4s_2s_3s_4s_1s_2s_3s_4s_1s_2s_3s_1$, 
and $x = s_2 s_3 s_4 s_3$. 
We have the following directed paths $\bp_{1}$ and $\bp_{2}$: 
\begin{equation*}
\bp_{1} : w \Qe{\alpha_2+2\alpha_3+2\alpha_4} \underbrace{s_2s_3s_4s_3}_{=x}s_2s_1 
= \tbmin{x}{\J}{w} \in x \WJs, 
\end{equation*}
\begin{equation*}
\bp_{2} : w \Qe{\alpha_1+\alpha_2+2\alpha_3+2\alpha_4} s_3s_4s_3s_2
\Be{\alpha_2+2\alpha_3+2\alpha_4} \underbrace{s_2s_3s_4s_3}_{=x}s_2 \in x \WJs; 
\end{equation*}
note that 
\begin{align*}
& \ell(\bp_{1})=\ell(\bp_{2})-1, \\
& \qwt(\bp_{1}) = (\alpha_2+2\alpha_3+2\alpha_4)^{\vee} = 
\alpha_2^{\vee} + 2\alpha_3 ^{\vee} + \alpha_4^{\vee}, \qquad 
\qwt(\ed(\bp_1) \Rightarrow x) = \alpha_1^{\vee}+\alpha_2^{\vee}, \\
& \qwt(\bp_{2}) = (\alpha_1+\alpha_2+2\alpha_3+2\alpha_4)^{\vee} = 
\alpha_1^{\vee} + \alpha_2^{\vee} + 2\alpha_3 ^{\vee} + \alpha_4^{\vee}, \qquad 
\qwt(\ed(\bp_2) \Rightarrow x) = \alpha_2^{\vee}.
\end{align*}
We can check that 
\begin{equation*}
\bR{w,x,d} = 
  \begin{cases}
  \bigl\{ \bp_{1}, \bp_{2} \bigr\} 
  & \text{if $d = d_{1}\alpha_{1}^{\vee} + d_{2} \alpha_{2}^{\vee}+2\alpha_{3}^{\vee} + d_{4}\alpha_{4}^{\vee}$
  with $d_{1},d_{4} \in \BZ_{\ge 1}$ and $d_{2} \in \BZ_{\ge 2}$}, \\
  \emptyset & \text{otherwise}. 
  \end{cases}
\end{equation*}
\end{ex}
%
%
\begin{ex} \label{ex:b4i3c}
Assume that $n = 4$ and $i = 3$. 
Let $w = s_3s_4s_2s_3s_4s_1s_2s_3s_4s_1s_2s_3s_1s_2s_1$, 
and $x = s_1s_2s_3s_4s_3s_1s_2s_1$. 
We have the following directed paths $\bp_{k}$, $1 \le k \le 4$: 
\begin{equation*}
\bp_{1} : w \Qe{\alpha_2+2\alpha_3+2\alpha_4} s_1s_2s_3s_4s_3s_1s_2s_1 = \tbmin{x}{\J}{w} \in x \WJs, 
\end{equation*}
\begin{equation*}
\bp_{2} : w \Qe{\alpha_1+\alpha_2+2\alpha_3+2\alpha_4} s_2s_3s_4s_3s_1s_2
\Be{\alpha_2+2\alpha_3+2\alpha_4} s_1s_2s_3s_4s_3s_1s_2
 \in x\WJs, 
\end{equation*}
\begin{equation*}
\bp_{3} : w \Qe{\alpha_1+2\alpha_2+2\alpha_3+2\alpha_4} s_3s_4s_3s_1
\Be{\alpha_2+2\alpha_3+2\alpha_4} s_1s_2s_3s_4s_3
 \in x\WJs, 
\end{equation*}
\begin{equation*}
\bp_{4} : w \Qe{\alpha_1+2\alpha_2+2\alpha_3+2\alpha_4} s_3s_4s_3s_1
\Be{\alpha_1+\alpha_2+2\alpha_3+2\alpha_4} s_2s_3s_4s_3s_1
\Be{\alpha_2+2\alpha_3+2\alpha_4} s_1s_2s_3s_4s_3s_1
 \in x\WJs; 
\end{equation*}
note that
\begin{align*}
& \ell(\bp_{2}) = \ell(\bp_{3}) = \ell(\bp_1)+1, \quad 
  \ell(\bp_{4}) = \ell(\bp_1)+2, \\
& \qwt(\bp_{1}) = \alpha_{2}^{\vee}+2\alpha_{3}^{\vee}+\alpha_{4}^{\vee}, \quad 
  \qwt(\ed(\bp_1) \Rightarrow x) = 0, \\
& \qwt(\bp_{2}) = \alpha_{1}^{\vee}+\alpha_{2}^{\vee}+2\alpha_{3}^{\vee}+\alpha_{4}^{\vee}, \quad 
  \qwt(\ed(\bp_2) \Rightarrow x) = 0, \\
& \qwt(\bp_{3}) = \alpha_{1}^{\vee}+2\alpha_{2}^{\vee}+2\alpha_{3}^{\vee}+\alpha_{4}^{\vee}, \quad 
  \qwt(\ed(\bp_3) \Rightarrow x) = 0, \\
& \qwt(\bp_{4}) = \alpha_{1}^{\vee}+2\alpha_{2}^{\vee}+2\alpha_{3}^{\vee}+\alpha_{4}^{\vee}, \quad 
  \qwt(\ed(\bp_4) \Rightarrow x) = 0.
\end{align*}
We can check that 
{\small%
\begin{equation*}
\bR{w,x,d} = 
\begin{cases}
\bigl\{ \bp_{1}, \bp_{2}, \bp_{3}, \bp_{4} \bigr\}
&\text{if $d = d_{1}\alpha_{1}^{\vee} + d_{2} \alpha_{2}^{\vee}+2\alpha_{3}^{\vee} + d_{4}\alpha_{4}^{\vee}$
  with $d_{1},d_{4} \in \BZ_{\ge 1}$ and $d_{2} \in \BZ_{\ge 2}$}, \\
\bigl\{ \bp_{1}, \bp_{2} \bigr\} & 
\text{if $d = d_{1}\alpha_{1}^{\vee} + \alpha_{2}^{\vee}+2\alpha_{3}^{\vee} + d_{4}\alpha_{4}^{\vee}$
with $d_{1},d_{4} \in \BZ_{\ge 1}$}, \\
\bigl\{ \bp_{1} \bigr\} & 
\text{if $d = d_{2}\alpha_{2}^{\vee}+2\alpha_{3}^{\vee} + d_{4}\alpha_{4}^{\vee}$
with $d_{2},d_{4} \in \BZ_{\ge 1}$}, \\
\emptyset & \text{otherwise}. 
\end{cases}
\end{equation*} }
\end{ex}

%

\end{document}